\newcommand{\E}{\mathbf E}
\renewcommand{\P}{\mathbf P}
\newcommand{\R}{\mathbb R}
\renewcommand{\L}{\mathcal L}
\newcommand{\e}{\epsilon}
\newcommand{\F}{\mathcal F}
\newcommand{\B}{\mathcal B}
\newcommand{\ind}{\mathbf 1}
\newcommand{\esssup}{\mathrm{ess\, sup}}
\begin{document}

\markboth{Q. Huang and X. Zhang}{Stochastic singular CS model}

%%%%%%%%%%%%%%%%%%% Publisher's Area please ignore %%%%%%%%%%%%%%%%%%%%%%%
%
\catchline{}{}{}{}{}
%
%%%%%%%%%%%%%%%%%%%%%%%%%%%%%%%%%%%%%%%%%%%%%%%%%%%%%%%%%%%%%%%%%%%%%%%%%%

\title{ On the Stochastic Singular Cucker--Smale Model: Well-Posedness, Collision-Avoidance and Flocking}

\author{Qiao Huang}

\address{Department of Mathematics, Faculty of Sciences, University of Lisbon, \\ Campo Grande, Edif\'{\i}cio C6, PT-1749-016 Lisboa, Portugal.\\
qhuang@fc.ul.pt}

\author{Xiongtao Zhang}

\address{Center for Mathematical Sciences, Huazhong University of Science and Technology,\\ Wuhan, Hubei 430074, P.R. China.\\
xtzhang@hust.edu.cn}

\maketitle

\begin{history}
\received{(Day Month Year)}
\revised{(Day Month Year)}
%\accepted{(Day Month Year)}
\comby{(xxxxxxxxxx)}
\end{history}

\begin{abstract}
  We study the Cucker--Smale (C-S) flocking systems involving both singularity and noise. We first show the local strong well-posedness for the stochastic singular C-S systems before the first collision time, which is a well defined stopping time.
  %in which the communication weight $\psi$ is merely locally Lipschitz on $(0,\infty)$. % and has lower bound (could be negative).
  Then, for communication with higher order singularity at origin (corresponding to $\alpha\ge1$ in the case of $\psi(r)=r^{-\alpha}$), we establish the global well-posedness by showing the collision-avoidance in finite time, provided that there is no initial collisions and the initial velocities have finite moment of any positive order. Finally, we study the large time behavior of the solution when $\psi$ is of zero lower bound, and provide the emergence of conditional flocking or unconditional flocking in the mean sense, for constant and square integrable intensity respectively.
%  \bigskip\\
%  \textbf{AMS 2010 Mathematics Subject Classification:} 60H10, 82C22, 35B40, 92D50. \\
%  \textbf{Keywords and Phrases:} Stochastic Cucker-Smale system, singular weight, collision-avoidance, velocity alignment, flocking.
  \end{abstract}

\keywords{Stochastic Cucker--Smale system; singular weight; collision-avoidance; velocity alignment; flocking.}

\ccode{AMS Subject Classification (2010): 60H10, 82C22, 35B40, 92D50}

\section{Introduction}\label{sec:1}
\setcounter{equation}{0}
Collective behaviors are ubiquitous in our daily life, such as flocking of birds, swarming of fish, synchronization of fireflies, \cite{TT98,TB04,VCBJ06,Win67} etc. In order to study emergence of collective behaviors, various multi-agent systems have been established, for instance, Winfree model \cite{Win67}, Kuramoto model \cite{Kur75}, Cucker--Smale (C-S) model \cite{CS07B}, etc. In recent decades, these seminal models have been extensively studied and applied to many different areas, such as bacteria aggregation, UAV formation, consensus in social network, etc. In this paper, we will study the C-S model with singular communication weight and white noise (see Section \ref{sec:2} for the detailed formulation). More precisely, let $(\Omega, \F, \P, \{\F_t\}_{t\ge0})$ be a filtered probability space satisfying the usual conditions. We consider the following stochastic Cucker--Smale model,
\begin{equation}\label{CS}\left\{
  \begin{aligned}
    dx_i(t) &= v_i(t) dt, \\
    dv_i(t) &= \frac{\lambda}{N}\sum_{j=1}^N \psi(|x_i(t)-x_j(t)|)(v_j(t)-v_i(t))dt + D(t) v_i(t) dW(t), \\
    (x_i(0)&, v_i(0)) = (x^0_{i},v^0_{i}) \in \R^{2d}, \quad i=1,2,\cdots,N,
  \end{aligned} \right.
\end{equation}
subject to that the initial law is given by a probability measure $\mu_0$ on $(\R^{2Nd},\B(\R^{2Nd}))$, namely,
%\begin{equation*}
%  \sum_{i=1}^N x_i(t) = 0, \quad \sum_{i=1}^N v_i(t) = 0,
%\end{equation*}
%and
\begin{equation*}
  \L(x^0,v^0) = \mu_0,
\end{equation*}
where $(x_i,v_i)$ denotes the position-velocity pair of $i$-th particle, $\psi$ is a communication weight that is allowed to have a singularity at the origin, $W=\{W(t)\}_{t\ge0}$ is an one-dimensional standard Brownian motion with respect to the filtration $\{\F_t\}_{t\ge0}$, $\lambda$ is a positive constant that indicates the coupling strength, $D:[0,\infty)\to \R$ is the noise intensity. Moreover, the initial data $(x^0,v^0)$ is an $\F_0$-measurable $\R^{2Nd}$-valued random vector independent of $W$ (where, of course, the probability space $(\Omega, \F, \P, \{\F_t\}_{t\ge0})$ is assumed to be rich enough to accommodate a random vector $(x^0,v^0)$ independent of $W$, cf. \cite[Section 5.2]{KS91}). We make a final assumption for the initialization, which will be clarified in the next section, that the joint distribution $\mu_0$ of $(x^0,v^0)$ satisfies
\begin{equation}\label{initial-cond}
  \mu_0\left( \left\{(x,v)\in \R^{2Nd}: \sum_{i=1}^N x_i = 0, \quad \sum_{i=1}^N v_i = 0 \right\}\right) = 1.
\end{equation}

The C-S model was firstly introduced by F. Cucker and S. Smale in 2007 when they studied the flocks of birds \cite{CS07B}. Since then, numerous of works related to this model have been produced in various areas and applications. To name a few, mono-cluster and multi-cluster formation \cite{BH17,CHH16,HKPa19}, avoidance of collision \cite{CD11}, mean-field limit and kinetic C-S model \cite{ABF19,CCR11,CFR10}, hydrodynamic limit and hydrodynamic C-S model \cite{DM07,HT08,KPS21,PS17,TT14}, random environment case \cite{AH10,CS19,CM08}, etc. Readers are also referred to \cite{FST16,ST20} for other related aggregation and flocking models.

Comparing to the original regular communication setting, the singular communication function and random noise setting are more proper and important in many aspects. In fact,  the noise makes the model more applicable since there are a lot of uncertainty in the real environment, while the singular interaction offers repulsion mechanism to avoid collisions, which is very important in real applications such as UAV formation. However, the combination of singularity and the noise draws a lot of difficulty in theoretical analysis, since the classical theories generally cannot be directly applied to yield the well-posedness, collision behaviors and large time behaviors for singular systems. Therefore, comparing to the regular case, there are only a few literature and results on the singular communications in both deterministic and stochastic cases. It is worth remarking here that due to the pattern of noise in \eqref{CS} that the intensity $D$ is varying in time, such stochastic C-S model belongs to a wide class of stochastic models, called \emph{time-dependent diffusion models}, which appears mostly in the mathematical finance \cite{FJZ03,HW90}. Compared to the stochastic models with constant intensity which is often called time-homogeneous models, the time-dependent diffusion models allow the instantaneous noise return to evolve with time. This enables one to simultaneously capture the time effect and reduce modeling bias. A specific example for time-dependent diffusion models will be provided in Remark \ref{remark-4}.

The well-posedness of the deterministic and stochastic singular C-S model is mainly affected by the singularity of the interaction, and the studies mainly focus on the deterministic case so far. The well-posedness for the deterministic case has been established in \cite{CCH14, MP18, Pes14, Pes15}. Then, the authors in \cite{CCM17} further constructed a sharp condition for the collision-avoidance. The paper \cite{HKP19} provided a rigorous proof of the mean-field limit and propagation of chaos. Recently, the authors in \cite{ZZ20} studied the deterministic one-dimensional case and obtained the complete classification of asymptotic behaviors. While for the stochastic case, the authors in \cite{CDP18} studied the well-posedness of stochastic C-S models with regular communication weights and various patterns of noise. We also refer to \cite{BCC11} where the authors studied the well-poseness and propagation of chaos for a class of stochastic systems of interacting particles with non-Lipschitz forces.

On the other hand, the large time behavior mainly depends on the noise intensity and the structure of $\psi$ at infinity, thus most of the studies concern on the stochastic C-S model with regular interaction. The authors in \cite{AH10} studied the regular C-S model with a multiplicative noise with constant intensity, and provided the emergence of pathwise flocking (or flocking in orbit sense). In fact, the pathwise flocking can be guaranteed directly by the multiplicative noise due to the pathwise decay of the exponential martingale of Brownian motion, and thus the paper \cite{AH10} announced that the multiplicative noise enhances the flocking emergence. Later on, \cite{CS19} studied a kinetic model that is similar to the kinetic version of \cite{AH10}, and provided the rigorous proof of the mean-field limit and large time behavior. Unlike \cite{AH10}, the authors in \cite{CS19} considered the flocking in a somewhat stronger sense, which requires asymptotic vanishing of the kinetic energy. For this reason, they announced a phase change phenomenon from a non-flocking to a flocking state depending on the intensity of noise, among which the flocking requires positive lower bound of $\psi$ and small noise. The paper \cite{TLY14} is just devoted to the discrete model associated to the kinetic model in \cite{CS19}. Thus the flocking they considered is in the sense of mean-square, as the discrete counterpart of the flocking in \cite{CS19}. Recently, the authors in \cite{CDP18} systematically discussed the pathwise unconditional and conditional flocking as well as the mean-square flocking, for the stochastic C-S models of different variants of noise. Actually, they compared different descriptions on flocking and mainly focus on the flocking in mean-square sense. Based on these existing results and observations, there are three key and natural questions or difficulties for the stochastic singular C-S model:

\begin{itemize}
\item Can we establish the well-posedness theory for the stochastic singular C-S system?
\item Can we describe the collision behaviors as in deterministic case. In particular, can we show the collision-avoidance for higher order singular communications?
\item How does the solution behave in large time? What kind of description for the flocking is better? And will the random noise make it more difficult to flock as mentioned in \cite{CS19} or, on the contrary, enhance the flocking as in \cite{AH10}?
\end{itemize}

To answer the above questions, the strategy and results in the present paper are three folds. First, we will show the local well-posedness of strong solutions to the C-S model with general communication weight (which could be singular). In this case, collisions may occur in finite time if the communication is not singular enough near the origin. And so far, we do not know how to properly restart the evolution of the system after a collision. Therefore we can only prove the unique solution exists before the first collision time. See Theorem \ref{local} and Remark \ref{remark-2} for details.

Second, we will show the collision-avoidance with probability one when the order of singularity is high enough (Theorem \ref{coll-avoid} and Remark \ref{remark-3}), thus the global well-posedness of strong solutions follows immediately .

Finally, to study the large time behavior for the system, we first follow the settings in \cite{CS19,TLY14} to assume a positive lower bound for the communication function, which provides the spectrum gap and thus generates exponentially fast flocking. We emphasize that, by our definition of flocking in Definition \ref{def-flocking}, there is no phase transition phenomenon mentioned in \cite{CS19,TLY14} (see Remark \ref{R2.3} and Proposition \ref{flocking-1}). This also coincides with a result in \cite{CDP18} of unconditional $L^1$ flocking for the case of positive constant communications.
Then, we will further consider the case when the communication has \emph{zero lower bound}. In this case, we do not have uniform dissipation structure to control the noise, and thus the exponential martingale will play a very important role. Actually, we find that the time asymptotical property of the intensity is crucial for studying large time behavior. So we will assume the noise intensity to be a constant or square integrable, and study the two cases respectively. It turns out that, technically, the stronger nonzero constant noise is better for aggregation analysis, since the exponential martingale of Brownian motion (see Definition \ref{exponential martingale}) is a.s. integrable in time. While the square integrable intensity is technically better for velocity alignment estimates, since the noise will disappear asymptotically. This is also a reason why we adpot the general setting of time-varing intensity in the present paper. Based on these observations, we first define the conditional flocking and unconditional flocking for the stochastic case (Definition \ref{def-flocking} and Definition \ref{def-cond-flocking}), and provide the emergence results of conditional and unconditional flocking respectively (Theorem \ref{cond-flock}, Theorem \ref{uncond-flock-constant-D} and Theorem \ref{flocking-integrable}). Note that these methods and results all work for regular communication case, since the large time behavior mainly relates to the far field structure of communications.

The rest of the paper will be organized as follows. In Section \ref{sec:2}, we will introduce some preliminary concepts and a priori results. In Section \ref{sec:3}, we will prove the local well-posedness for the system \eqref{CS} with general singular communication weight $\psi$. In Section \ref{sec:4}, we will consider the higher order singular case which includes $\psi(r) = \frac{1}{r^{\alpha}}$ with $\alpha\ge1$. We will prove the almost sure collision-avoidance for this case and hence the global well-posedness follows. Section \ref{sec:5} is devoted to the  large time behavior of the system, with nonzero constant intensity and square integrable intensity respectively. Finally, Section \ref{sec:6} will be contributed as a short summary and limited discussion.

Throughout this paper, the small letter $c$ is reserved to denote a finite \emph{positive} constant whose value may vary from line to line. We will use the notation $c(\cdots)$ to emphasize the dependence on the quantities appearing in the parentheses. We will occasionally omit the time variable $t$ (or $s$) for convenience, and $D$ is viewed to be a function of time $t$ unless otherwise specified. In addition, we let $|\cdot|$ be the $l^2$-norm on $\R^d$ and set for $0<p<\infty$ that,
\begin{equation*}
  \|x\|_p = \left( \sum_{i=1}^N |x_i|^p \right)^{1/p}, \quad \|v\|_p = \left( \sum_{i=1}^N |v_i|^p \right)^{1/p}.
\end{equation*}

\section{Preliminary}\label{sec:2}
\setcounter{equation}{0}

%\begin{lemma}\label{apriori}
%  Let $(x,v)$ be a solution to \eqref{CS}. Then we have
%  \begin{equation*}\left\{
%    \begin{aligned}
%      \left|\frac{d\|x\|}{dt}\right| &\le \|v\|, \\
%      \|v(t)\| &\le \|v(0)\| \exp\left(-\frac{D^2}{2}t + DW(t) \right).
%    \end{aligned} \right.
%  \end{equation*}
%\end{lemma}
In this section, we will first briefly show the formulation of the system \eqref{CS} from the original C-S model, then we will introduce some basic notions and a priori results that will be used in later sections.

\subsection{Formulation of the model}\label{subsec:2-1}
In this part, we will introduce the connection between the system \eqref{CS} and the original system initiated by F. Cucker and S. Smale in their seminal work \cite{CS07B} in 2007. Actually, the authors in \cite{CS07B} studied the following model,
\begin{equation*}
\left\{
  \begin{aligned}
   \frac{d \tilde x_i}{dt} &= \tilde v_i , \\
    \frac{d \tilde v_i}{dt} &= \frac{\lambda}{N}\sum_{j=1}^N \bar{\psi}(|\tilde x_i-\tilde x_j|)(\tilde v_j-\tilde v_i). \\
  \end{aligned} \right.
\end{equation*}
where $(\tilde x_i,\tilde v_i)$ denotes the position and velocity of $i$-th particle at time $t$, and $\bar{\psi}$ is the communication weight. Now, we consider $\bar{\psi}$ as a random perturbation of a deterministic communication function $\psi$. More precisely, we set
\[\bar{\psi}=\psi-\frac{D}{\lambda}\dot W,\]
where $W$ is a scalar Brownian motion and $\dot W$ is the associated white noise, $D$ is a time-dependent noise intensity and the negative sign is just for convenience. Then, we obtain a stochastic C-S model as below,
\begin{equation}\label{B1}
\left\{
  \begin{aligned}
    d\tilde x_i &= \tilde v_i dt, \\
    d\tilde v_i &= \frac{\lambda}{N}\sum_{j=1}^N \psi(|\tilde x_i-\tilde x_j|)(\tilde v_j-\tilde v_i)dt - \frac{D}{N}\sum_{j=1}^N (\tilde v_j-\tilde v_i)dW. \\
  \end{aligned} \right.
\end{equation}
Now we follow \cite{AH10,HL09} to introduce  macroscopic quantities (center of mass) and microscopic quantities (fluctuations) for the system \eqref{B1}:
\begin{alignat}{2}
  \bar x &= \frac{1}{N}\sum_{j=1}^N \tilde x_j, &\hspace{20pt} \bar v &= \frac{1}{N}\sum_{j=1}^N \tilde v_j, \label{macro} \\
  x_i &= \tilde x_i - \bar x, & v_i &= \tilde v_i - \bar v. \label{micro}
\end{alignat}
It is easy to derive from \eqref{B1} and \eqref{macro} that the center of mass satisfies
\begin{equation}\label{center}
  \frac{d\bar x}{dt} = \bar v, \quad \frac{d\bar v}{dt} = 0,
\end{equation}
which can be uniquely solved by $\bar x(t) = \bar x(0) + \bar v(0) t$, $\bar v(t) = \bar v(0)$. Subtracting \eqref{center} from \eqref{B1}, we conclude that the microscopic part $(x,v)$ satisfies the equation \eqref{CS}. The constraint \eqref{initial-cond} for initial data follows immediately from the relation \eqref{micro}. Indeed, \eqref{initial-cond} means that with probability one, the initial data $(x(0),v(0))$ is centered, i.e., $\sum_{i=1}^N x_i(0) = 0$ and $\sum_{i=1}^N v_i(0) = 0$. %We can get a stronger a priori identities as follow
Now, if we sum up \eqref{CS} over $i=1,2,\cdots,N$, we obtain a system of equations for $\sum_{i=1}^N x_i$ and $\sum_{i=1}^N v_i$ as follows:
\begin{equation*}
  d\sum_{i=1}^N x_i = \sum_{i=1}^N v_i dt, \quad d\sum_{i=1}^N v_i = D \sum_{i=1}^N v_i dW,
\end{equation*}
which together with the constraint \eqref{initial-cond} implies that with probability one, the following conservation laws hold for every $t\ge0$,
\begin{equation}\label{center-all}
  \sum_{i=1}^N x_i(t) = 0,\quad \sum_{i=1}^N v_i(t) = 0.
\end{equation}
Note that this is also a direct consequence of \eqref{micro}. Therefore, system \eqref{CS} is actually identical to the original system \eqref{B1}.

%Since the macroscopic equation \eqref{center} gives explicit solution $\bar x(t) = \bar x(0) + \bar v(0) t$, $\bar v(t) = \bar v(0)$, one can say that the dynamics of the microscopic system \eqref{CS} is identical to the dynamics of the original system \eqref{B1} except for the constraint \eqref{initial-cond} on the initial data.

\subsection{Definitions of solutions and flocking}
In this part, we will introduce some basic concepts that will be frequently used in later sections. First, we will define the local strong solution to the system \eqref{CS}.

\begin{definition}[Strong solution up to a stopping time]\label{def-local}
  Let $\tau$ be an $\{\F_t\}$-stopping time. A continuous, $\{\F_t\}$-adapted, $\R^{2Nd}$-valued process $(x,v)$ defined on the stochastic time interval $[0,\tau)$ is called a \emph{strong solution up to $\tau$} to the system \eqref{CS} if there exists a non-decreasing sequence of $\{\F_t\}$-stopping times $\{\tau_n\}_{n\ge1}$ such that
  \begin{itemize}
    \item[(i)] $0\le\tau_n \le\tau$ for all $n\ge1$ and $\lim_{n\to\infty} \tau_n = \tau$ a.s.,
    \item[(ii)] $\tau_n<\tau$ for all $n\ge1$ a.s. on the event $\{0<\tau<\infty\}$,
    \item[(iii)] for each $n\ge1$,
  \begin{equation*}
    \begin{split}
      \P\Bigg( x_i(t\wedge\tau_n) &= x_i(0) + \int_0^{t\wedge \tau_n} v_i(s) ds, \\
      v_i(t\wedge\tau_n) & = v_i(0) + \int_0^{t\wedge \tau_n} D(s) v_i(s)dW(s)\\
       &\quad + \frac{\lambda}{N}\sum_{j=1}^N \int_0^{t\wedge \tau_n} \psi(|x_i(s)-x_j(s)|)(v_j(s)-v_i(s))ds, \\
      x_i(t\wedge\tau_n) &\ne x_j(t\wedge\tau_n), \quad\forall 1\le i\ne j \le N, \forall 0\le t<\infty  \Bigg) = 1.
    \end{split}
  \end{equation*}
  \end{itemize}
\end{definition}

\begin{remark}\label{remark-5}
  It follows from (i) and (iii) in Definition \ref{def-local} that a.s., $x_i(t) \ne x_j(t)$ for all $t\in[0,\tau)$ and $i\ne j$.
\end{remark}

Then, we introduce the concept of exponential martingale, which will be frequently used in the later estimates about the stochastic integral, and plays a very important role in the time-asymptotic analysis.
\begin{definition}[Exponential martingale]\label{exponential martingale}
Let $D$ be a real-valued function on $[0,\infty)$ satisfying $\int_0^T D^2(t)dt <\infty$ for all $T>0$. By the \emph{exponential martingale} of the stochastic integral process $\int_0^\cdot D(s) dW(s)$, we mean a process $\mathcal E = \{\mathcal E(t)\}_{t\ge0}$ defined by,
\[\mathcal E(t):=\exp\left(-\frac{1}{2}\int_0^t D^2(s) ds + \int_0^t D(s)dW(s) \right), \quad t\ge0.\]
\end{definition}

\begin{remark}
  The exponential martingale $\mathcal E$ is indeed a martingale due to the Novikov's criterion \cite[Corollary 3.5.13]{KS91}. If we enhance the integrable assumption for $D$ to $\int_0^\infty D^2(t)dt <\infty$, then $\mathcal E$ is also uniformly integrable \cite[Theorem III.45]{Pro05}. The latter assumption will be used when studying the flocking in Subsection \ref{subsec:5-2}. In any case, we have $\E(\mathcal E(t)) \equiv \E(\mathcal E(0)) = 1$. Actually, the exponential martingale we defined here is a very special case of the so called Dol\'eans-Dade exponential, of which we refer to \cite{JS13,KS91} for more details.
\end{remark}

We next extend the definition of time-asymptotic flocking in \cite{HL09} to the stochastic case. Generally speaking, the emergence of time-asymptotic flocking means that the velocity of all agents tends to a common value and the relative position of them remains bounded (both in certain probability sense) when time tends to infinity. Then, recalling the notations in \eqref{macro} and \eqref{micro}, we define the (unconditional) flocking and conditional flocking as follows (cf. \cite[Definition 1.10.(2)]{CDP18}).
\begin{definition}[Flocking in mean]\label{def-flocking}
  Let $p\ge2$. We say the stochastic C-S system \eqref{B1} has a \emph{time-asymptotic flocking in mean}, if the solution $(\tilde x,\tilde v)$ satisfies
  \begin{itemize}
    \item[(i)] $\lim_{t\to\infty} \E(\|\tilde v(t)- \bar v(t)\|_p) = 0$ (velocity alignment),
    \item[(ii)] $\sup_{t\ge0} \E(\|\tilde x(t)- \bar x(t)\|_p) <\infty$ (aggregation or group forming).
  \end{itemize}
\end{definition}

Since as we showed in subsection \ref{subsec:2-1}, the original stochastic C-S system \eqref{B1} are related to the reduced system \eqref{CS} by $x = \tilde x-\bar x$ and $v = \tilde v-\bar v$, the velocity alignment condition in Definition \ref{def-flocking} is then equivalent to $\lim_{t\to\infty} \E(\|v(t)\|_p) = 0$, and the group forming condition is equivalent to $\sup_{t\ge0} \E(\|x(t)\|_p) <\infty$.\newline

\begin{remark}\label{R2.3}
There are several kinds of definitions for the time-asymptotic flocking. We make a brief comparison here.

(i). The authors in \cite{AH10} (and also in \cite{CDP18}) used the \emph{almost sure convergence} $\text{a.s.-}\lim_{t\to\infty} \|\tilde v(t)- \bar v(t)\|_2 = 0$ to describe the pathwise flocking. In fact, they found that the multiplicative noise can generate an exponential martingale (see Proposition \ref{regular} below for a sketch), which directly leads to the pathwise flocking even without the coupling of communications in drift terms. However, when we take expectation to the exponential martingale, we will obtain a constant since it is a martingale. Therefore, %the description of flocking in \cite{AH10} is weaker than Definition \ref{def-flocking}. In other words,
the multiplicative noise can only enhance the flocking in pathwise sense, but not in mean sense.

(ii).  The papers \cite{CS19,TLY14} (and also \cite{CDP18}) used the \emph{mean-square convergence} $\lim_{t\to\infty} \E(\|\tilde v(t)- \bar v(t)\|^2_2) = 0$ to describe the flocking emergence, and thus obtained the flocking when the lower bound of the communication function is greater than the noise intensity. This in fact implies the phase transition from energy dissipation to energy increasing in the system, which cannot be avoided due to the positive noise intensity. However, there will be no such phase transition phenomenon when we adopt Definition \ref{def-flocking} to describe the flocking, and in this situation, the energy dissipation is only sufficient but not necessary to the flocking emergence. %We will obtain the flocking emergence even when the energy of the system is increasing (Proposition \ref{flocking-1}).
The reason is due to the martingale nature of the exponential martingale. Intuitively, as the exponential martingale has constant expectation, the exponential decay induced by the communication part dominates. But if we take the mean-square to the exponential martingale, it will lead to an exponential growth, and thus the flocking occurs only if the lower bound of the communication can dominate the additional growth.

(iii). Note that all these definitions of flocking are equivalent in the deterministic case, but not in the stochastic case. According to the discussions above, we may say it is better to use Definition \ref{def-flocking} (i.e., $L^1$ flocking in \cite{CDP18}) to describe the emergence of flocking for the stochastic C-S model \eqref{CS}.
%The reason why we can obtain stronger results is that, all these definitions are equivalent in deterministic case, but not in stochastic case. If we use the quantity $\|\tilde v(t)- \bar v(t)\|^2_2$ to describe the flocking as in \cite{CS19,TLY14}, there will be an additional term generated in the dynamics of $\|\tilde v(t)- \bar v(t)\|^2_2$ due to the It\^o's formula. This bad term will lead to an exponential growth, and thus the flocking occurs only if the lower bound of $\psi$ can control the additional growth. While in our case, the additional growth will disappear in the equation of the quantity $\|\tilde v(t)- \bar v(t)\|_p$ (see Proposition \ref{regular}). In this sense, it is better to use Definition \ref{def-flocking} to describe the emergence of flocking in stochastic case.
\end{remark}

\begin{definition}[Conditional flocking in mean]\label{def-cond-flocking}
  Let $p\ge2$. Let $A\in\F$ be an event with nonzero probability, i.e., $\P(A)>0$. We say the stochastic C-S system \eqref{CS} (or \eqref{B1}) has a \emph{time-asymptotic flocking in mean conditioning on $A$} or \emph{conditional time-asymptotic flocking in mean given $A$}, if the solution $(x,v)$ satisfies
  \begin{itemize}
    \item[(i)] $\lim_{t\to\infty} \E(\|v(t) \|_p | A) = 0$ (conditional velocity alignment),
    \item[(ii)] $\sup_{t\ge0} \E(\|x(t) \|_p | A) <\infty$ (conditional aggregation or group forming).
  \end{itemize}
\end{definition}

\begin{remark}\label{R2.6}
We make a few remarks for the definition of conditional flocking.

(i). Logically, the dynamical system \eqref{CS} has ``input'' the initial data $(x^0,v^0)$ and the noise $W$, and ``output'' $(x(t),v(t))$. The principle of causality for dynamical systems (cf. \cite[Section 5.2]{KS91}) requires naturally that the conditioning event $A$ needs be only related to the ``input'', that is, $A$ needs be taken in the $\sigma$-algebra $\F_0 \vee \F^W_\infty$, where $\F^W_\infty$ is the $\sigma$-algebra generated by the Brownian motion $W$ on whole time interval $[0,\infty)$, that is, $\F^W_\infty = \sigma\{W(t); 0\le t <\infty\}$.

(ii). Obviously, if the conditioning event $A$ has full probability, i.e., $\P(A)= 1$, then the flocking conditioning on $A$ coincides with the (unconditional) flocking defined in Definition \ref{def-flocking}.

(iii). If there is no noise in the system \eqref{CS}, and the initial distribution $\mu_0$ is set up as an empirical distribution $\mu_0 = \frac{1}{n}\sum_{j=1}^n \delta_{(x^{0,j},v^{0,j})}$, for given pairs $(x^{0,j},v^{0,j})\in\R^{2Nd}$, $j=1,2,\cdots,n$ which are the values of $n$ independent observations in a sample, then the system \eqref{CS} degenerates to $n$ separated deterministic C-S systems. In this case, the conditional flocking defined above reduces to the one used in \cite[Section 4]{HL09}, since the conditional expectation is just the normalized summation over the indices $j$'s that fulfill the given condition.

(iv). As we have said, the definitions for the flocking could be quite different. In \cite{CDP18}, the authors introduced a notion of pathwise conditional flocking, which applies well for a class of stochastic C-S models. While for the specific model \eqref{CS}, the unconditional pathwise flocking has been obtained in \cite{AH10}. In the present paper, we will focus on the conditional flocking in the mean sense. A simple example is provided in \ref{App-A} to show that the flocking in mean may not emerge conditioning on some particular events.
\end{remark}

\subsection{A priori results}

In this part, we introduce the well-posedness for the system \eqref{CS} when $\psi$ is regular, which can be viewed as an a priori estimates for later analysis. The proof is similar to \cite[Theorem 2.1]{TLY14}, and the $l^p$ estimates are standard generalization of the $l^2$ estimates in \cite[Lemma 3.1, 3.2]{AH10}. We will briefly sketch the proof for the reader's convenience.
\begin{proposition}\label{regular}
  Let $\psi, D:[0,\infty)\to(-\infty,\infty)$ be two locally Lipschitz functions. Assume $\psi$ is bounded from below, that is, $\psi_*:=\inf_{r\ge0}\psi(r)>-\infty$. Then for any probability measure $\mu_0$ on $(\R^{2Nd},\B(\R^{2Nd}))$, the system \eqref{CS} has a unique global strong solution $(x,v)$. %such that the law of $(x(0),v(0))$ coincides with $\mu$.
  Moreover, with probability one, we have for each $p\ge 2$ and all $t\ge0$,
  \begin{align}
    &\left|\frac{d\|x(t)\|_p}{dt}\right| \le \|v(t)\|_p,  \label{apriori-1}\\
    & \|v(t)\|_p \le \|v(0)\|_p \exp\left(-\lambda\psi_* t -\frac{1}{2}\int_0^t D^2(s) ds + \int_0^t D(s)dW(s)\right).  \label{apriori-2}
  \end{align}
\end{proposition}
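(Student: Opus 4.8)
The plan is to obtain local solvability for free from the regularity of the coefficients, and then to upgrade it to a global solution by means of the two quantitative bounds \eqref{apriori-1}--\eqref{apriori-2}, which simultaneously exclude explosion. First I would note that, writing $\psi_{ij}(t):=\psi(|x_i(t)-x_j(t)|)$, the drift $\big(v_i,\ \frac{\lambda}{N}\sum_j\psi_{ij}(v_j-v_i)\big)$ and diffusion $(0,Dv_i)$ of \eqref{CS} are locally Lipschitz and continuous in $(x,v)\in\R^{2Nd}$, so the classical theory (cf.\ \cite[Section~5.2]{KS91}) produces a pathwise-unique strong solution on a stochastic interval $[0,\sigma)$, where $\sigma=\lim_n\sigma_n$ and $\sigma_n$ is the first exit time of $(x,v)$ from the ball of radius $n$. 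Summing the $v_i$-equations over $i$ and relabelling $i\leftrightarrow j$ in $\sum_{i,j}\psi_{ij}(v_j-v_i)$ (legitimate since $\psi_{ij}=\psi_{ji}$) makes the interaction term drop out, so $d\sum_i v_i=D\big(\sum_i v_i\big)\,dW$; combined with the constraint \eqref{initial-cond} and uniqueness for this linear equation, it gives $\sum_i v_i(t)=0$, and likewise $\sum_i x_i(t)=0$, for all $t<\sigma$, a.s. This zero-mean identity is precisely what makes the $l^p$ estimates close.

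For \eqref{apriori-1}: since $t\mapsto x_i(t)$ is $C^1$ and $x\mapsto|x|^p$ is $C^1$ on $\R^d$ for $p\ge1$, the chain rule gives $\frac{d}{dt}\|x\|_p^p=p\sum_i|x_i|^{p-2}\langle x_i,v_i\rangle$; Cauchy--Schwarz followed by Hölder's inequality with exponents $\frac{p}{p-1},p$ bounds this by $p\|x\|_p^{p-1}\|v\|_p$, and dividing by $p\|x\|_p^{p-1}$ on the open set $\{\|x\|_p>0\}$ (reading the derivative one-sidedly, i.e.\ a.e.\ in $t$, elsewhere) yields $\big|\frac{d}{dt}\|x\|_p\big|\le\|v\|_p$.

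The substance of the proof is \eqref{apriori-2}. Since $x\mapsto|x|^p\in C^2(\R^d)$ for $p\ge2$ (its Hessian extends continuously by $0$ at the origin), Itô's formula applied to $|v_i(t)|^p$ and summed over $i$ gives
\[
d\|v\|_p^p=p\frac{\lambda}{N}\sum_{i,j}\psi_{ij}|v_i|^{p-2}\langle v_i,v_j-v_i\rangle\,dt+pD\|v\|_p^p\,dW+\frac{p(p-1)}{2}D^2\|v\|_p^p\,dt,
\]
which one may alternatively obtain by applying Itô to $(|v_i|^2+\varepsilon)^{p/2}$ and letting $\varepsilon\downarrow0$. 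Symmetrizing the drift in $i\leftrightarrow j$ rewrites the double sum as $-\frac12\sum_{i,j}\psi_{ij}B_{ij}$ with $B_{ij}=|v_i|^p+|v_j|^p-(|v_i|^{p-2}+|v_j|^{p-2})\langle v_i,v_j\rangle$, and Cauchy--Schwarz together with the monotonicity of $r\mapsto r^{p-1}$ give $B_{ij}\ge(|v_i|^{p-1}-|v_j|^{p-1})(|v_i|-|v_j|)\ge0$. Writing $\psi_{ij}=\psi_*+(\psi_{ij}-\psi_*)$ and discarding the nonpositive contribution $-\frac12\sum_{i,j}(\psi_{ij}-\psi_*)B_{ij}$, the interaction drift is at most $-\frac{p\lambda}{2N}\psi_*\sum_{i,j}B_{ij}$; here $\sum_i v_i=0$ forces $\sum_{i,j}(|v_i|^{p-2}+|v_j|^{p-2})\langle v_i,v_j\rangle=0$, so $\sum_{i,j}B_{ij}=2N\|v\|_p^p$ and the bound becomes $-p\lambda\psi_*\|v\|_p^p$. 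Thus $d\|v\|_p^p\le\big(-p\lambda\psi_*+\frac{p(p-1)}{2}D^2\big)\|v\|_p^p\,dt+pD\|v\|_p^p\,dW$. Multiplying $\|v\|_p^p$ by the integrating factor $\exp\!\big(p\lambda\psi_* t-\frac{p(p-1)}{2}\int_0^tD^2\,ds-p\int_0^tD\,dW+\frac{p^2}{2}\int_0^tD^2\,ds\big)$ and applying Itô once more, the product turns out to have nonpositive drift and zero martingale part, hence is nonincreasing in $t$; solving back and taking $p$-th roots gives exactly \eqref{apriori-2}.

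To finish, global existence follows: for each $t$ the right-hand side of \eqref{apriori-2} is an a.s.\ finite, continuous function of $t$, so $\sup_{[0,T\wedge\sigma)}\|v\|_p<\infty$ a.s.\ for every $T$, and then \eqref{apriori-1} yields $\sup_{[0,T\wedge\sigma)}\|x\|_p\le\|x(0)\|_p+\int_0^T\|v(s)\|_p\,ds<\infty$ a.s.; hence $(x,v)$ stays bounded on $[0,T\wedge\sigma)$, contradicting the explosion characterization unless $\sigma\ge T$ for all $T$, i.e.\ $\sigma=\infty$ a.s., so the solution is global and unique. Continuity of all the processes upgrades the a.s.-for-each-$t$ statements to a.s.-for-all-$t$. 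I expect the only real work to be in \eqref{apriori-2}: checking $B_{ij}\ge0$ after symmetrization, invoking $\sum_i v_i=0$ at exactly the right place, and tracking the $D^2$-coefficients through the stochastic integrating factor so that $\frac{p(p-1)}{2}$ and $\frac{p^2}{2}$ combine into the clean $-\frac12\int_0^tD^2\,ds$ in the exponent.
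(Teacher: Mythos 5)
Your proof is correct and closely parallels the paper's: local well-posedness from local Lipschitz coefficients, Itô's formula for $\|v\|_p^p$, symmetrization in $i\leftrightarrow j$, the Cauchy--Schwarz/monotonicity step for $B_{ij}\ge0$, the zero-mean identity $\sum_i v_i=0$ to close the estimate, and then exclusion of blow-up. The one genuine methodological difference is in converting the differential inequality for $\|v\|_p^p$ into the explicit bound \eqref{apriori-2}: the paper applies Itô a third time to $\|v\|_p=(\|v\|_p^p)^{1/p}$, reduces to a linear SDE for a scalar majorant $V$, and invokes the one-dimensional comparison theorem of Ikeda--Watanabe; you instead remain at the level of $\|v\|_p^p$, multiply by a stochastic integrating factor whose martingale part exactly cancels that of $\|v\|_p^p$, deduce a.s.\ monotonicity, and take $p$-th roots at the end. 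Your route is marginally more self-contained (no appeal to the comparison theorem, and no need to apply Itô to the potentially non-smooth map $r\mapsto r^{1/p}$ near the zero set of $\|v\|_p$), at the modest cost of the bookkeeping you flag in tracking the $D^2$ coefficients. Two minor cosmetic points: the Hessian of $|x|^p$ extends continuously by $0$ at the origin only for $p>2$ (for $p=2$ it is the constant $2I$), though $C^2$-regularity on $\R^d$ holds for all $p\ge2$ as needed; and your lower bound $B_{ij}\ge(|v_i|^{p-1}-|v_j|^{p-1})(|v_i|-|v_j|)$ is a valid alternative to the paper's $B_{ij}\ge\tfrac12(|v_i|^{p-2}-|v_j|^{p-2})(|v_i|^2-|v_j|^2)$ — both serve the same purpose.
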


\begin{proof}
  Since all coefficients in \eqref{CS} are locally Lipschitz on $\R^{2Nd}$, the classical theory of the well-posedness for SDEs yields the existence of the unique strong solution up to the explosion time (see, e.g., \cite[Theorem IV.3.1]{IW89}). In the sequel, we will prove the two estimates \eqref{apriori-1} and \eqref{apriori-2} first, then it implies that with probability one the explosion time is infinity, which means the unique strong solution is global. Now, we use Cauchy--Schwarz inequality to get
  \begin{equation}\label{est-20}
    \begin{split}
      p \|x\|_p^{p-1} \left|\frac{d\|x\|_p}{dt}\right| &= \left|\frac{d\|x\|_p^p}{dt}\right| = p \left|\sum_{i=1}^N |x_i|^{p-2} \langle x_i,v_i\rangle \right| \\
      &\le p \left(\sum_{i=1}^N \left| |x_i|^{p-2} x_i \right|^{p'} \right)^{1/p'} \left( \sum_{i=1}^N |v_i|^p \right)^{1/p} = p \|x\|_p^{p-1} \|v\|_p,
    \end{split}
  \end{equation}
  which gives \eqref{apriori-1}. We turn to prove \eqref{apriori-2}. % by splitting in two cases. If $\|v(0)\|_p = 0$, then it is clear that $(x(t),v(t)) \equiv (x(0), 0)$ is the unique solution of the system \eqref{CS}, so that \eqref{apriori-2} holds trivially. In the case that $\|v(0)\|_p \ne0$, the pathwise uniqueness yields almost surely that $\|v(t)\|_p \ne0$ for all $t\ge0$. Indeed, suppose that the solution $(x,v)$ satisfies $\|v(t_0)\|_p =0$ for some $t_0\ge0$. Then $(\tilde x(t),\tilde v(t)) \equiv (x(t_0), 0)$ is another solution with $(\tilde x(0),\tilde v(0)) \ne (x(0),v(0))$ but with $(\tilde x(t_0),\tilde v(t_0)) = (x(t_0),v(t_0))$, which contradicts with the pathwise uniqueness of \eqref{CS}.
%  $\|v\|$. Note that $\frac{\partial}{\partial v_i} \|v\| = \frac{v_i}{\|v\|}$, $\frac{\partial^2}{\partial v_i \partial v_j} \|v\| = \frac{\delta_{ij}}{\|v\|}-\frac{v_iv_j}{\|v\|^3}$. Then we have
%  \begin{equation*}
%    d\|v\| = \sum_{i=1}^N \frac{v_i}{\|v\|} dv_i + \frac{1}{2}\sum_{i=1}^N\sum_{j=1}^N\left( \frac{\delta_{ij}}{\|v\|}-\frac{v_iv_j}{\|v\|^3} \right) d\left[ v_i,v_j\right]
%  \end{equation*}
  Applying It\^o's formula to the equation of velocity $v_i$, we obtain the following equation,
  \begin{equation*}
    \begin{split}
      d|v_i|^2 &= 2 \langle v_i, dv_i \rangle + \langle dv_i, dv_i \rangle \\
      &= \left( \frac{2\lambda}{N}\sum_{j=1}^N \psi(|x_i-x_j|)\langle v_i, v_j-v_i \rangle + D^2 |v_i|^2 \right) dt + 2D |v_i|^2 dW.
    \end{split}
  \end{equation*}
 Next, we apply It\^o's formula again and use the above formula for $|v_i|^2$ to obtain
  \begin{equation*}
    \begin{split}
      d \|v\|_p^p =&\ \sum_{i=1}^N d|v_i|^p = \sum_{i=1}^N \left( \frac{p}{2} |v_i|^{p-2} d|v_i|^2 + \frac{p}{4}\left( \frac{p}{2} - 1 \right) |v_i|^{p-4} d[|v_i|^2, |v_i|^2] \right) \\
      =&\ \sum_{i=1}^N \left[ \left( \frac{p\lambda}{N}\sum_{j=1}^N \psi(|x_i-x_j|) |v_i|^{p-2} \langle v_i, v_j-v_i \rangle + \frac{p D^2}{2} |v_i|^p \right) dt + pD |v_i|^p dW \right] \\
      &\ + \sum_{i=1}^N \frac{p(p-2)D^2}{2} |v_i|^p dt \\
      =&\ -\frac{p\lambda}{2N} \sum_{i=1}^N \sum_{j=1}^N \psi(|x_i-x_j|) \langle |v_i|^{p-2} v_i - |v_j|^{p-2} v_j, v_i-v_j \rangle dt \\
      &\ + \frac{p(p-1)D^2}{2} \|v\|_p^p dt + pD \|v\|_p^p dW,
    \end{split}
  \end{equation*}
  where the bracket $[\cdot,\cdot]$ on the right hand side (RHS) of the first equality denotes the quadratic variation.
%  Finally, \red{since $\|v(t)\|_p\ne0$ for all $t\ge0$ a.s., we can still use It\^o's formula for the function $f(r) = r^{1/p}$ to take the stochastic differential for $\|v\|_p$, via a standard technique of approximation and localization, although $f$ is not of $C^2$ at 0.} Thus we obtain that
%  \begin{equation*}
%    \begin{split}
%      d \|v\|_p &= \frac{1}{p} \|v\|_p^{1-p} d \|v\|_p^p + \frac{1}{2p} \left( \frac{1}{p} - 1 \right) \|v\|_p^{1-2p} d [\|v\|_p^p, \|v\|_p^p] \\
%      &= -\frac{\lambda \|v\|_p^{1-p}}{2N} \sum_{i=1}^N \sum_{j=1}^N \psi(|x_i-x_j|) \langle |v_i|^{p-2} v_i - |v_j|^{p-2} v_j, v_i-v_j \rangle dt + D \|v\|_p dW.
%    \end{split}
%  \end{equation*}
  Note that since $p\ge2$, we have
  \begin{equation*}
    \begin{split}
      &\ \langle |v_i|^{p-2} v_i - |v_j|^{p-2} v_j, v_i-v_j \rangle \\
      =&\ |v_i|^p + |v_j|^p - \left( |v_i|^{p-2} + |v_j|^{p-2} \right) \langle v_i, v_j \rangle \\
      \ge&\ |v_i|^p + |v_j|^p - \frac{1}{2} \left( |v_i|^{p-2} + |v_j|^{p-2} \right) \left( |v_i|^2 + |v_j|^2 \right) \\
      =&\ \frac{1}{2} \left( |v_i|^{p-2} - |v_j|^{p-2} \right) \left( |v_i|^2 - |v_j|^2 \right) \\
      \ge&\ 0.
    \end{split}
  \end{equation*}
  This together with \eqref{center-all} yields
  \begin{align*}
      &\ \sum_{i=1}^N \sum_{j=1}^N \psi(|x_i-x_j|) \langle |v_i|^{p-2} v_i - |v_j|^{p-2} v_j, v_i-v_j \rangle \\
      \ge&\ \psi_* \sum_{i=1}^N \sum_{j=1}^N \langle |v_i|^{p-2} v_i - |v_j|^{p-2} v_j, v_i-v_j \rangle \\
      = &\ \psi_* \sum_{i=1}^N \sum_{j=1}^N \left[ |v_i|^p + |v_j|^p - \left( |v_i|^{p-2} + |v_j|^{p-2} \right) \langle v_i, v_j \rangle \right] \\
      = &\ \psi_* \left[ 2 N \|v\|_p^p - 2 \left\langle \sum_{i=1}^N |v_i|^{p-2}v_i, \sum_{j=1}^N v_j \right\rangle \right] = 2N \psi_* \|v\|_p^p.
  \end{align*}
  Hence, a straightforward application of the comparison theorem for one-dimensional SDEs (see, e.g., \cite[Theorem VI.1.1]{IW89}) yields that, $\|v(t)\|^p_p \le V(t)$ for $t\ge0$ with probability one, where $V$ satisfies the following SDE
  \begin{equation}\label{GBM}
    dV(t) = \left[ \frac{p(p-1)D^2(t)}{2} -p\lambda\psi_* \right] V(t) dt + pD(t) V(t) dW(t), \quad V(0) = \|v(0)\|^p_p,
  \end{equation}
  whose solution is given explicitly by
  $$V(t) = \exp\left(-p\lambda\psi_* t - \frac{p}{2}\int_0^t D^2(s) ds + p\int_0^t D(s)dW(s)\right).$$
  The local boundedness of $D$ ensures that the stochastic integral $\int_0^t D(s)dW(s)$ is well-defined for all $0\le t<\infty$. This proves \eqref{apriori-2}. And therefore, there is no finite time blow-up for $(x,v)$ and we finish the proof.
\end{proof}
$\ $
%\begin{remark}
% The process $V$ is nothing but the Dol\'eans-Dade exponential of the Brownian motion $W$, which is indeed a martingale (cf. \cite[Section 3.3.5.D]{KS91}). In particular, $\E(V(t)) = \E(V(0))$ for every $t\ge0$.
% \end{remark}

\begin{remark}\label{remark-4}
  (i). The SDE \eqref{GBM} for the dominating process $V$ is called the \emph{local volatility model} with instantaneous risk-free rate $\frac{1}{2}p(p-1)D^2(t)$, dividend yield $p\lambda\psi_*$ and instantaneous volatility $pD(t)$ \cite{Dup94}. There are two special cases for this SDE. In the case that $D$ is a constant, the process $V$ is the so called \emph{geometric Brownian motion} \cite{Oks03}. In the case $\lambda=0$, $V^{1/p}$ is just the exponential martingale $\mathcal E$ defined in Definition \ref{exponential martingale}. %If moreover, $D$ is square integrable on $[0,\infty)$, that is $\int_0^\infty D^2(s) ds <\infty$, then $\{\exp( - \frac{1}{2}\int_0^t D^2(s) ds + \int_0^t D(s)dW(s))\}_{t\ge0}$ is uniformly integrable (cf. \cite[p.~141, Theorem 45]{Pro05}). The uniform integrability will be useful when studying the large time behavior for singular systems in Section \ref{sec:5}. %In particular, $\E(V(t)) = \E(V(0))$ for every $t\ge0$.

  (ii). In general, the stochastic integral process $\{\int_0^t \|v(s)\|_p dW(s)\}_{t\ge 0}$ is only a martingale but not uniformly integrable. When $D$ is a constant, a sufficient condition for this stochastic integral to be uniformly integrable is that $\E\left( \|v(0)\|_p^2 \right)<\infty$ and $D^2<2\lambda\psi_*$. Indeed, by \eqref{apriori-2} and the independence of $v(0)$ and $W$, we have
  \begin{equation*}
    \begin{split}
      \int_0^\infty \E \left( \|v(t)\|_p^2 \right) dt &\leq \E\left( \|v(0)\|_p^2 \right) \int_0^\infty \E\left[ \exp\left(-2\lambda\psi_* t -D^2t+2DW(t)\right) \right] dt \\
      &= \E\left( \|v(0)\|_p^2 \right) \int_0^\infty \exp\left(-2\lambda\psi_* t +D^2t \right) dt \\
      &<\infty.
    \end{split}
  \end{equation*}
  Note that this estimate coincides with the results in \cite{CS19,TLY14} that the mean-square flocking cannot emergent when $\psi_*\leq \frac{D^2}{2\lambda}$, as discussed in Remark \ref{R2.3}.
\end{remark}

\section{Local well-posedness for singular systems}\label{sec:3}
\setcounter{equation}{0}
In this section, we consider the stochastic C-S model with a general singular communication weight. More precisely, we define the communication weight $\psi$  only on the open interval $(0,\infty)$. And we will always assume in the sequel that $\psi_*:=\inf_{r>0}\psi(r)\ge0$ for convenience and avoiding misunderstandings. But actually for the purpose of local well-posedness and collision-avoidance, the requirement can be relaxed. Please see Remark \ref{remark-2}.(vi) for details.

As we discussed before in the introduction, collisions may occur if the order of singularity is low. Therefore, we first prove the local well-posedness of system \eqref{CS}, which basically means that the solution is always well-defined before the first collision occurs.

\begin{theorem}[Local well-posedness]\label{local}
  Let $\psi:(0,\infty)\to[0,\infty)$ and $D:[0,\infty)\to(-\infty,\infty)$ be two locally Lipschitz functions.
%  Assume $\psi_*:=\inf_{r\ge0}\psi(r)>-\infty$.
  Suppose the initial probability measure $\mu_0$ is collisionless, that is,
  \begin{equation}\label{initial-dist-no-coll}
    \mu_0\left( x_i \ne x_j, \forall 1\le i\ne j\le N \right) = 1.
  \end{equation}
%  has absolutely continuous first marginal distribution, that is, there exists a nonnegative function $\rho\in L^1(\R^{Nd})$, such that $\mu_0(A,\R^{Nd}) = \int_A\rho(x)dx$ for all $A\in\B(\R^{Nd})$.
  Then there exists a unique stopping time $\tau^*>0$ a.s. and a strong solution $(x,v)$ up to $\tau^*$ to the system \eqref{CS} (in the sense of Definition \ref{def-local}), such that %the law of $(x(0),v(0))$ coincides with $\mu$ and
  the following holds a.s.:
  \begin{equation}\label{collision-limit}
    \liminf_{t\uparrow\tau^*} \min_{i\ne j}|x_i(t)-x_j(t)| = 0, \quad\text{on } \{\tau^*<\infty\}.
  \end{equation}
  The uniqueness holds in the following sense: if $(y,u)$ is another strong solution up to a stopping time $\sigma$, then $\sigma\le\tau^*$ and $(y,u)=(x,v)$ on $[0,\sigma)$ a.s.. Moreover, with probability one, we have for each $p\ge 2$ and all $t\in [0,\tau^*)$,
  \begin{align}
    &\left|\frac{d\|x(t)\|_p}{dt}\right| \le \|v(t)\|_p, \label{est-2-1}\\
    &\|v(t)\|_p \le \|v(0)\|_p \exp\left(-\lambda\psi_* t -\frac{1}{2}\int_0^t D^2(s) ds + \int_0^t D(s)dW(s)\right). \label{est-2}
  \end{align}
\end{theorem}
\begin{proof}
 We will use a cutoff method and approximation process to construct the stopping time and strong solution. The proof will be separated in five steps.\newline

 \noindent $\bullet$ (Step 1). In this step, we will make a proper cutoff to the singular system \eqref{CS} and then we will obtain a sequence of global strong solutions to these cutoff regular systems. Let $\{a_n\}_{n\ge1}$ be a sequence of positive numbers satisfying $a_{n+1}<a_n$ for all $n\ge1$ and $a_n\to 0$ as $n\to\infty$. For each integer $n\ge 1$, we define
  \begin{equation*}%\label{psi-n}
    \psi^n(r) = \psi(r)\ind_{[a_n,\infty)}(r) + \psi(a_n)\ind_{[0,a_n)}(r), \quad r\ge0.
  \end{equation*}
  Then obviously, each $\psi^n$ is locally Lipschitz from $[0,\infty)$ to itself. By Proposition \ref{regular}, the approximating system with $\psi^n$ in place of $\psi$ in \eqref{CS} and with the same initial data admits a unique global strong solution $(x^n,v^n)$ which is a continuous, $\{\F_t\}$-adapted, $\R^{2Nd}$-valued process.\newline

 \noindent $\bullet$ (Step 2). In this step, we will construct a sequence of stopping times $\{\tau_n\}_{n\ge1}$ to fulfill the conditions in Definition \ref{def-local}, via the solutions $(x^n,v^n)$ of cutoff systems. First, we define for each $n\ge 1$ a stopping time $\tau_n$ by
  \begin{equation*}%\label{tau-n}
    \tau_{n} := \inf\left\{t\ge0: \min_{i\ne j}|x^n_i(t)-x^n_j(t)| \le a_n \right\}.
  \end{equation*}
  Then on $[0,\tau_n]$, we have $\min_{i\ne j}|x^n_i-x^n_j| \ge a_n$, and by the continuity of $x^n$,
  \begin{equation}\label{est-4}
    \min_{i\ne j}|x^n_i(\tau_{n})-x^n_j(\tau_{n})| = a_n \quad\text{on } \{\tau_{n}>0\}.
  \end{equation}
  As the decreasing property of the sequence $a_n$, we apply the definition of $\psi^n$ and have that $\psi^{n+1}(r)=\psi^n(r)$ when $r\in[a_n,\infty)$. On the other hand, according to Proposition \ref{regular}, the cutoff regular system admits a pathwise solution which is unique. Now, since the two solutions are governed by the same system before $\tau_n$ and they have same initial data, we can apply the uniqueness of solutions for the regular cutoff systems to conclude that,
  \[(x^{n+1},v^{n+1})=(x^n,v^n),\quad t\in[0,\tau_n]\ \text{a.s.}.\]
  Furthermore, by the fact that $a_{n+1}<a_n$ and the definition of $\tau_{n+1}$, we know that the minimal inter-particle distance of $x^{n+1}$ will not reach $a_{n+1}$ before $\tau_n$, which implies that $\tau_n\le\tau_{n+1}$ a.s.. For convenience, we properly redefine each $(x^n,v^n)$ on a common null set $\mathcal N\in\F$ such that $\tau_n\le\tau_{n+1}$ and $(x^{n+1},v^{n+1})=(x^n,v^n)$ for each $n\ge1$.\newline

\noindent $\bullet$ (Step 3). In this step, we will show that the limit of the sequence $\{\tau_n\}_{n\ge1}$ of stopping times is an a.s. \emph{positive} stopping time and prove. Since the sequence $\{\tau_n\}$ is non-decreasing, there limit is always well-define and is a stopping time \cite[Lemma 1.2.11]{KS91}. We denote, instead,
\begin{equation}\label{tau-star}
  \tau^* = \lim_{n\to\infty}\tau_n,
\end{equation}
for this limit has a special meaning as we will see in the later remark. Then we claim that $\tau^*>0$ a.s.. If $\tau^* = 0$ with positive probability, then on the event $\{\tau^* = 0\}$ we have for all $n\ge1$, $\tau_n=0$ and
  \begin{equation*}
    \min_{i\ne j}|x_i(0)-x_j(0)| = \min_{i\ne j}|x^n_i(0)-x^n_j(0)| \le a_n,
  \end{equation*}
  and hence $\min_{i\ne j}|x_i(0)-x_j(0)| = 0$. This contradicts with the assumption \eqref{initial-dist-no-coll} for $\mu_0$, since
  \begin{equation}\label{initial-non-coll}
    \begin{split}
      \P\left( \min_{i\ne j}|x_i(0)-x_j(0)| = 0 \right) &= \P\left( \exists i\ne j, x_i(0)=x_j(0) \right) \\
      &= \mu_0 \left( \exists i\ne j, x_i=x_j \right) = 0.
%      \le&\ \ \sum_{i\ne j} \P\left( x_i(0)=x_j(0) \right) = \sum_{i\ne j} \int_{x_i=x_j} \rho(x) dx = 0.
    \end{split}
  \end{equation}
  Thus we conclude that $\tau^*>0$ a.s.. As a consequence of \eqref{tau-star} and the monotonicity of $\{\tau_n\}$, we know that there exists $n_0\ge1$ such that for all $n\ge n_0$, $\P(\tau_n >0) >0$. \newline

\noindent $\bullet$ (Step 4). In this step, we will construct a strong solution up to the stopping time $\tau^*$ to the system \eqref{CS}, and prove the properties \eqref{collision-limit}--\eqref{est-2}. Define a continuous process $(x,v)$ up to $\tau^*$ by
  \begin{equation*}
    (x,v)(\omega)=(x^n,v^n)(\omega), \quad\text{on } [0,\tau_n(\omega)].
  \end{equation*}
  We will prove this is a strong solution in the sense of Definition \ref{def-local}. Firstly, it is easy to see that $(x,v)$ is a well-defined continuous process up to $\tau^*$, since $\tau_n$ is non-decreasing and $\tau^*$ is the limit of $\tau_n$. This implies Definition \ref{def-local}.(i).  Next, for all $t\ge0$ and $i=1,2,\cdots,n$, we have
  \begin{equation}\label{C4}
    \begin{split}
      x_i(t\wedge\tau_n) & = x^n_i(t\wedge\tau_n) = x_i(0) + \int_0^{t\wedge \tau_n} v^n_i(s) ds = x_i(0) + \int_0^t \ind_{\{s\le\tau_n\}}v^n_i(s) ds \\
         & = x_i(0) + \int_0^t \ind_{\{s\le\tau_n\}}v_i(s) ds = x_i(0) + \int_0^{t\wedge \tau_n} v_i(s) ds,
    \end{split}
  \end{equation}
  which provides the formulation of $x_i$ up to $\tau_n$. We can use the same method to obtain the formulation of $v_i$ up to $\tau^*$ as follows,
 \begin{equation}\label{C5}
   v_i(t\wedge\tau_n)  = v_i(0) + \frac{\lambda}{N}\sum_{j=1}^N \int_0^{t\wedge \tau_n} \psi(|x_i(s)-x_j(s)|)(v_j(s)-v_i(s))ds + \int_0^{t\wedge \tau_n}D(s) v_i(s)dW(s).
   \end{equation}
  Then, by \eqref{est-4} and the fact $a_n>0$, we have almost surely on the event $\{\tau_n>0\}$ that for all $i\neq j$ and $0\le t<\infty$,
  \begin{equation}\label{est-1}
    |x_i(t\wedge\tau_n)-x_j(t\wedge\tau_n)|=  |x_i^n(t\wedge\tau_n)-x_j^n(t\wedge\tau_n)|\geq \min_{k\ne m}|x^n_k(\tau_n)-x^n_m(\tau_n)| = a_n > 0,
  \end{equation}
  while on $\{\tau_n=0\}$, we have from \eqref{initial-non-coll} that almost surely \begin{equation}\label{est-5}
    |x_i(t\wedge\tau_n)-x_j(t\wedge\tau_n)|= |x_i(0)-x_j(0)|>0, \quad \forall i\neq j, 0\le t<\infty.
  \end{equation}
  Combining \eqref{C4}--\eqref{est-5}, we conclude that the equation in Definition \ref{def-local}.(iii) holds. Finally, on the event $\{\tau^*<\infty\}$, we have $\tau_n<\infty$ for all $n\ge1$. It follows from the definition of $\tau^*$ in \eqref{tau-star} and again \eqref{est-4} that a.s. on $\{\tau^*<\infty\}$,
  \begin{equation}\label{C7}
    \begin{aligned}
      &\ \liminf_{t\uparrow\tau^*} \min_{i\ne j}|x_i(t)-x_j(t)| = \lim_{n\rightarrow+\infty}\min_{i\ne j}|x_i(\tau_n)-x_j(\tau_n)| \\
      =&\ \lim_{n\rightarrow+\infty}\min_{i\ne j}|x^n_i(\tau_n)-x^n_j(\tau_n)| \\
      =&\ \lim_{n\rightarrow+\infty}\left( a_n\ind_{\{\tau_n>0\}} + \min_{i\ne j}|x_i(0)-x_j(0)| \ind_{\{\tau_n=0\}} \right) \\
      =&\ \lim_{n\rightarrow+\infty} a_n\ind_{\{\tau^*>0\}} + \min_{i\ne j}|x_i(0)-x_j(0)| \ind_{\{\tau^*=0\}} \\
      =&\ 0.
    \end{aligned}
  \end{equation}
We claim that \eqref{C7} implies that $\tau_n<\tau^*$ for all $n\ge1$ a.s. on $\{\tau^*<\infty\}$. Indeed, if $\P(\tau_m=\tau^*, \tau^*<\infty)>0$ for some $m\ge1$, then on the event $\{\tau_m=\tau^*, \tau^*<\infty\}$, we apply the continuity of $x$ and \eqref{C7} to obtain,
  \begin{equation}\label{C8}
    \min_{i\ne j}|x_i(\tau_m)-x_j(\tau_m)| = \liminf_{t\uparrow\tau_m} \min_{i\ne j}|x_i(t)-x_j(t)| = \liminf_{t\uparrow\tau^*} \min_{i\ne j}|x_i(t)-x_j(t)|=0.
  \end{equation}
  According to \eqref{est-1}, we know the distance between two particles at $\tau_m$ should be no less than $a_m$ in probability one, which implies that \eqref{C8} holds for probability zero. However, this contradicts to the assumption $\P(\tau_m=\tau^*, \tau^*<\infty)>0$. Hence we conclude that $\tau_n<\tau^*$ on the event $\{\tau^*<\infty\}$, which is Definition \ref{def-local}.(ii). Combining all above analysis, we have that $(x_i,v_i)$ is a strong solution in the sense of Definition \ref{def-local}.

Moreover, \eqref{C7} implies \eqref{collision-limit} on the event $\{\tau^*<\infty\}$. On the other hand, The estimate \eqref{est-2-1} is clear. We can apply \eqref{apriori-2} in Proposition \ref{regular} to have the following uniform bound for the cutoff system,
  \begin{equation*}
    \|v^n(t)\|_p \le \|v(0)\|_p \exp\left(-\lambda\psi_* t -\frac{1}{2}\int_0^t D^2(s) ds + \int_0^t D(s)dW(s) \right), \quad\forall t\ge0.
  \end{equation*}
As the RHS is independent of $n$ and $(x,v)=(x^n,v^n)$ before $\tau_n$, we conclude that $(x,v)$ has the estimate \eqref{est-2}. \newline

\noindent $\bullet$ (Step 5). In this step, we will prove by contradiction the uniqueness of the solution and the stoping time $\tau^*$. Suppose $(y,u)$ is another strong solution up to a stopping time $\sigma$. Then by the definition, there is a non-decreasing sequence of stopping times $\sigma_m\le\sigma$ such that $\lim_{m\to\infty} \sigma_m = \sigma$ and Definition \ref{def-local}.(iii) holds with $(y,u),\sigma_m$ in place of $(x,v),\tau_n$. We define a new non-decreasing sequence of stopping times by
  \begin{equation}\label{C9}
    \sigma'_m := \inf\left\{0\le t\le\sigma: \min_{i\ne j}|y_i(t)-y_j(t)| \le a_m \right\}\wedge\sigma_m.
  \end{equation}
  As $\sigma_m$ is non-decreasing, it is obvious that $\sigma'_m$ is also non-decreasing due to \eqref{C9}. Therefore, the sequence $\sigma'_m$ admits a limit and we set $\sigma':=\lim_{m\to\infty}\sigma'_m$.

  We claim that $\sigma' = \sigma$ with probability one. Indeed, we have  $\{\sigma' < \sigma\} = \cup_{k\ge1} \{\sigma'<\sigma_k\}$. And on each event $\{\sigma'<\sigma_k\}$ and for all $m\ge k$, we have $\sigma'_m\leq\sigma'<\sigma_k\leq\sigma_m$. Then, the definition of $\sigma'_m$ in \eqref{C9} implies that
  \begin{align}
 & \sigma'_m\leq\sigma'<\sigma_k\leq\sigma_m,\label{C10}\\
   &\sigma'_m := \inf\left\{0\le t\le\sigma: \min_{i\ne j}|y_i(t)-y_j(t)| \le a_m \right\}.\label{C11}
   \end{align}
  Therefore, we apply \eqref{C11} and the continuity of the strong solution to have $\min\limits_{i\neq j}|y_i(\sigma'_m)-y_j(\sigma'_m)|=a_m$ for all $m\ge k$. As $a_m$ decreases to zero, we apply the continuity of $y$ to obtain that
  \begin{equation}\label{C12}
    \min_{i\ne j}|y_i(\sigma')-y_j(\sigma')| = \lim_{m\to\infty} \min_{i\ne j}|y_i(\sigma'_m)-y_j(\sigma'_m)| =\lim_{m\to\infty}a_m= 0.
  \end{equation}
  On the other hand, according to Definition \ref{def-local}.(iii), we have $\min|y_i(t\wedge\sigma_m)- y_j(t\wedge\sigma_m)|>0$ with probability one for any $m\geq1$. Hence, we apply \eqref{C10}, \eqref{C12} and Definition \ref{def-local}.(iii) to obtain that the event $\{\sigma'<\sigma_k\}$ has probability zero, which implies that $\{\sigma'<\sigma\}$ has probability zero. Therefore, we finish the proof of the claim and conclude that $\sigma' = \sigma$ with probability one. Now note that
  \begin{equation*}
    \begin{split}
      y_i(t\wedge\sigma'_m) &= x^0_i + \int_0^{t\wedge \sigma'_m} u_i(s) ds, \\
      u_i(t\wedge\sigma'_m) & = v^0_i + \frac{\lambda}{N}\sum_{j=1}^N \int_0^{t\wedge \sigma'_m} \psi(|y_i(s)-y_j(s)|)(u_j(s)-u_i(s))ds +  \int_0^{t\wedge \sigma'_m} D(s) u_i(s)dW(s) \\
      & = v^0_i + \frac{\lambda}{N}\sum_{j=1}^N \int_0^{t\wedge \sigma'_m} \psi^m(|y_i(s)-y_j(s)|)(u_j(s)-u_i(s))ds + \int_0^{t\wedge \sigma'_m} D(s) u_i(s)dW(s).
    \end{split}
  \end{equation*}
  The pathwise uniqueness for the regular systems yields almost surely that $(y,u)=(x^m,v^m)=(x,v)$ on $[0,\sigma'_m]$ and $\sigma'_m\le \tau_m$ for all $m\ge1$.
%  If $\sigma'_m>\tau$ with positive probability for some $m\ge1$, then on the event $\{\sigma'_m>\tau\}$, $(y,u)=(x,v)$ on $[0,\tau_n]$ a.s.. Using again the continuity of $y$ and \eqref{est-1}, we have
%  \begin{equation*}
%    \min_{i\ne j}|y_i(\tau)-y_j(\tau)| = \lim_{n\to\infty}\min_{i\ne j}|y_i(\tau_n)-y_j(\tau_n)| = \lim_{n\to\infty}\min_{i\ne j}|x_i(\tau_n)-x_j(\tau_n)| = 0,
%  \end{equation*}
%  which has zero probability by \eqref{solution}. This implies for all $m\ge1$, $\sigma'_m\le\tau$ a.s..
  Therefore, $\sigma = \lim_{m\to\infty}\sigma'_m \le \lim_{m\to\infty} \tau_m= \tau^*$ and $(y,u)=(x,v)$ on $\cup_{m}[0,\sigma'_m] = [0,\sigma)$ a.s.. This finishes the proof.%proves the uniqueness of the strong solution and the stopping time $\tau^*$.
\end{proof}

\begin{remark}\label{remark-2}
  (i). The uniqueness in Theorem \ref{local} also yields that the solution $(x,v)$ up to $\tau^*$ can not be extend to a solution up to any larger stopping time $\sigma$ (in the sense of Definition \ref{def-local}) with $\sigma\ge\tau^*$ and $\P(\sigma>\tau^*)>0$.

  (ii). From \eqref{collision-limit} and Remark \ref{remark-5}, we can see that the stopping time $\tau^*$ is nothing but the \emph{first collision time}, which is intuitive correctly since $\psi$ is not defined at $0$.

  (iii). By the forthcoming Lemma \ref{L4.2}, the statement \eqref{collision-limit} can actually be enhanced to
  \begin{equation*}
    \lim_{t\uparrow\tau^*} \min_{i\ne j}|x_i(t)-x_j(t)| = 0, \quad\text{on } \{\tau^*<\infty\}.
  \end{equation*}
  That is, there is no oscillation near the first collision time almost surely.

  (iv). A sufficient condition for the initial distribution $\mu_0$ to satisfy the collisionless assumption \eqref{initial-dist-no-coll} is that it has absolutely continuous first marginal distribution, namely, there exists a nonnegative function $\rho\in L^1(\R^{Nd})$, such that $\mu_0(A,\R^{Nd}) = \int_A\rho(x)dx$ for all $A\in\B(\R^{Nd})$. Indeed,
  \begin{equation*}
    \P\left( \exists i\ne j, x_i(0)=x_j(0) \right) \le \sum_{i\ne j} \P\left( x_i(0)=x_j(0) \right) = \sum_{i\ne j} \int_{x_i=x_j} \rho(x) dx = 0.
  \end{equation*}
  If we remove the collisionless assumption \eqref{initial-dist-no-coll} for $\mu_0$, then we can still obtain the existence of the unique strong solution up to a stopping time $\tau^*$, but only with a.s. $\tau^*\ge0$. In this case, the collisions may occur in the initial configuration.

  (v). If the noise intensity $D$ is identically zero, then for the non-collision initial data, the local existence is guaranteed before the first collision. Otherwise, suppose there are two agents stay together initially, one need to view them together as a larger particle, and the governed equations would be changed to CS model of $N-1$ agents with different mass, which is no longer the original symmetric CS model with $N$ agents. For this kind of models, please see discussions in \cite{HL09,MP18,ZZ20}.

  (vi). Theorem \ref{local} is still available for possible negative communication weight $\psi$ satisfying $\psi_*:=\inf_{r>0}\psi(r)>-\infty$.  However, since the flocking results in Section \ref{sec:5} as well as in \ref{App-A} and \ref{App-B} require $\psi_*\ge0$, we still set $\psi_*\ge0$ in force in this section and the next, in order to avoid the initial confusion.
\end{remark}

\section{Collision-avoidance for higher order singular systems}\label{sec:4}

In this section, we study under what conditions the system \eqref{CS} can avoid collisions and then have a global solution. According to \cite{CCM17,ZZ20}, in the case $\psi(r)=r^{-\alpha}$, $r>0$, the exponent $\alpha=1$ is critical for collision-avoidance for the original deterministic C-S model. We will prove that, with probability one, collisions will not occur in this case for the stochastic model \eqref{CS}. We first prove the collision-avoidance for one-dimensional case, which will clearly show our strategy.

\begin{proposition}\label{P4.1}
  %Let $\alpha\ge1$ and $d=1$. Suppose $\mu_0$ has absolutely continuous first marginal distribution, and there exists $r>0$ such that $\int_{\R^{2N}}\|v\|_2^r\mu_0(dx,dv)<\infty$.
  Let $d=1$. Suppose the assumptions in Theorem \ref{local} hold. Assume in addition that
  \begin{itemize}
    \item[(i)] there exists $\e>0$ such that $\int_{\R^{2N}}\|v\|_2^\e\mu_0(dx,dv)<\infty$, and
    \item[(ii)] $\psi$ is non-increasing on $(0,\infty)$, and %$\psi(r)\to+\infty$ and
        $|\Psi(r)|\to+\infty$ as $r\to0$, where $\Psi$ is a primitive function of $\psi$.
  \end{itemize}
  Then the system \eqref{CS} admits a unique global strong solution. Moreover, with probability one the trajectories of this solution do not collide in finite time, i.e.,
  \begin{equation*}
    \P(x_i(t) \ne x_j(t), \forall 1\le i\ne j\le N, \forall t\ge0 ) = 1.
  \end{equation*}
\end{proposition}
\begin{remark}\label{remark-3}
  (i). Since any two primitive functions of $\psi$ are differed by a constant, the assumption that $|\Psi(r)|\to+\infty$ as $r\to0$ is independent of the choice of the primitive function $\Psi$. Moreover, it is easy to see from the singularity of $\Psi$ at 0 and the continuity and non-increasing of $\psi$ that
  \[\left\{ \begin{aligned}
    &\psi(r)\rightarrow +\infty, && \text{as } r\rightarrow 0,\\
    &\psi(r)\rightarrow \psi_*:=\inf_{r>0}\psi(r), && \text{as } r\rightarrow \infty.
  \end{aligned}\right.\]
  %by a straightforward calculus that, the singularity of $\Psi$ at 0 as well as the continuity and non-increasing of $\psi$ yields the only singularity of $\psi$ at 0. More precisely, we have $\psi(r)\rightarrow +\infty$ as $r\rightarrow 0$ and $\psi(r)\rightarrow \psi_*:=\inf_{r>0}\psi(r)$ as $r\rightarrow \infty$.

  (ii). One can see from the assumptions that in our content, the collision-avoidance only relates to the singularity order of $\psi$ at 0 but has nothing to do with the decay rate of $\psi$ at infinity. We will see in Section \ref{sec:5} as well as in \ref{App-A} and \ref{App-B} where flocking behaviour is investigated, that the situation is exactly opposite, which means the flocking behaviour only relates to the decay rate of $\psi$ at infinity, but has nothing to do with the singularity order of $\psi$ at 0.

  (iii). In the case that $\psi(r) = r^{-\alpha} + \psi_*$, $r>0$, with some constant $\psi_*\ge0$, the primitive function is
  \begin{equation*}
    \Psi(r) =
    \begin{cases}
      \ln r + \psi_*t + C, & \mbox{if } \alpha=1, \\
      \frac{1}{1-\alpha}r^{1-\alpha} + \psi_*t + C, & \mbox{if } \alpha\ne1,
    \end{cases}
  \end{equation*}
  with some constant $C$. The assumptions on $\psi$ then reduce to $\alpha\ge1$. This example with $\psi_*=0$ is exactly the case studied in \cite{CCM17}. So basically, if we take $D\equiv 0$ and the deterministic initial data in Remark \ref{R2.6} which are required to be collisionless, then our result recovers the collision-avoidance results in \cite[Proposition 2.2, Theorem 2.1]{CCM17} very well.

  (iv). As we have said in Remark \ref{remark-2}.(vi), all results in this section are also available for the case $\psi_* >-\infty$. Intuitively, the negative value of $\psi$ only appears away from zero due to the assumption of $\psi$. Therefore, when two agents are closed to each other, the value of $\psi$ will be positive, and the mechanism to avoid collision will be same as positive communication case.

  (v). All these remarks are still available for the multi-dimensional case Theorem \ref{coll-avoid} by some obvious adaption in (i) and (iii), even though the assumption for $\psi$ will be slightly changed.
\end{remark}
\begin{proof}
  According to Theorem \ref{local}, there exists a unique stopping time $\tau^*$ such that either $\tau^*=+\infty$ or $\tau^*$ is finite, and that
  \[\liminf_{t\uparrow\tau^*} \min_{i\ne j}|x_i(t)-x_j(t)| = 0, \quad\text{a.s. on } \{\tau^*<\infty\}.\]
 In order to prove the avoidance of collision and the global existence of the strong solution, we only need to show $\P(\tau^* = \infty) = 1$, which is equivalently to that
 \begin{equation}\label{D1}
 \P(\tau^*\le T)=0,\quad \forall T>0.
 \end{equation}

  %Fix $n\ge1$. Let, as in the proof of Proposition \ref{local}, $(x^n,v^n)$ be the solution of the regular systems with communication weight $\psi^n$ in \eqref{psi-n}.
\noindent   In the following, we will suppose that $\{\tau^*\le T\}$ is nonempty for some fixed $T>0$ and prove $\P(\tau^*\le T)=0$. As the proof is relatively long, we will split it into four steps.\newline

\noindent $\bullet$ (Step 1). In this step, we will construct the collision set and the corresponding differential equation of the particles in the collision set. As we already assume $\{\tau^*\le T\}$ is nonempty, we can define a random index set on $\{\tau^*\le T\}$ as follows,
  \begin{equation*}
    I(\omega) = \left\{ i\in\{1,2,\cdots,N\}: \exists j \ne i, \ \liminf_{t\uparrow\tau^*(\omega)} |x_i(t)(\omega)-x_j(t)(\omega)| = 0 \right\}.
  \end{equation*}
  The set $I$ can be regarded as the collision set which contains all collision particles at the stopping time $\tau^*$. By \eqref{collision-limit}, we know that $I\ne\emptyset$ on $\{\tau^*\le T\}$ a.s.. Then, we can further define a random index $l:\{\tau^*\le T\}\to\{1,2,\cdots,N\}$ as follows,
  \begin{equation}\label{D3}
    l(\omega) = \arg\max\{x_i(0)(\omega): i\in I(\omega)\}.
  \end{equation}
  The random variable $l$ is well-defined except on on a null subset of $\Omega$, since by \eqref{initial-non-coll} $x_i(0)\ne x_j(0)$ for all $i\ne j$ a.s.. According to \eqref{D3}, for almost all $\omega\in\{\tau^*\le T\}$, we have $x_i(0)(\omega) < x_l(0)(\omega)$ for all $i\in I(\omega)$ and $i\ne l(\omega)$. Moreover, by Definition \ref{def-local}.(iii), $x_i(t) \ne x_j(t)$ for all $i\ne j$ and $t\in[0,\tau^*)$. Therefore, we have $x_i(t)<x_l(t)$ for all $i\in I, i\ne l$ over the whole time interval $[0,\tau^*)$. Now we define
  \begin{equation*}
    \xi(t) = \sum_{i\in I} (v_l(t) - v_i(t)), \quad t\in[0,\tau^*).
  \end{equation*}
  As $(x_i,v_i)$ is the strong solution to the system \eqref{CS}, we can differentiate $\xi$ and obtain the stochastic differential of $\xi$ over $[0,\tau^*)$ as below,
  \begin{equation}\label{D4}
    \begin{split}
      d\xi =&\ \frac{\lambda}{N} \sum_{i\in I} \sum_{j=1}^N \psi(|x_l-x_j|)(v_j-v_l)dt - \frac{\lambda}{N} \sum_{i\in I} \sum_{j=1}^N \psi(|x_i-x_j|)(v_j-v_i)dt\\
      &\ + D \sum_{i\in I} (v_l - v_i) dW \\
      =&\ \frac{\lambda}{N} \sum_{i,j\in I} \psi(|x_l-x_j|)(v_j-v_l)dt - \underbrace{\frac{\lambda}{N} \sum_{i,j\in I} \psi(|x_i-x_j|)(v_j-v_i)}_{=0}dt \\
      &\ + \underbrace{\frac{\lambda}{N} \sum_{i\in I} \sum_{j\notin I} \left[ \psi(|x_l-x_j|)(v_j-v_l) - \psi(|x_i-x_j|)(v_j-v_i) \right]}_{=:\Xi} dt + D \sum_{i\in I} (v_l - v_i) dW \\
      =&\ \frac{\lambda |I|}{N}\sum_{j\in I, j\ne l} \psi(|x_l-x_j|)(v_j-v_l)dt + \Xi dt + D \xi dW,
    \end{split}
  \end{equation}
  where we apply the antisymmetry in the indexes $i$ and $j$ to obtain the cancellation.\newline

 \noindent $\bullet$ (Step 2). In this step, we will make use of the equation \eqref{D4} to construct a Lyapunov functional. Actually, we integrate both sides of \eqref{D4} over $[0,t\wedge \sigma]$ for any stopping time $\sigma<\tau^*$ to obtain
  \begin{equation}\label{D5}
    \begin{split}
      \xi(t\wedge\sigma) - \xi(0) =&\ -\frac{\lambda |I|}{N}\sum_{j\in I, j\ne l} \Psi(x_l(t\wedge\sigma)-x_j(t\wedge\sigma)) + \frac{\lambda |I|}{N}\sum_{j\in I, j\ne l} \Psi(x_l(0)-x_j(0)) \\
      &\ + \int_0^{t\wedge\sigma} \Xi(s) ds + \int_0^{t\wedge\sigma}D(s) \xi(s) dW(s).
    \end{split}
  \end{equation}
%  where $\Psi$ in the last equality is the primitive of $\psi$, i.e.,
%  \begin{equation}\label{D6}
%    \Psi(r) =
%    \begin{cases}
%      \ln r, & \mbox{if } \alpha=1, \\
%      \frac{1}{1-\alpha}r^{1-\alpha}, & \mbox{if } \alpha>1.
%    \end{cases}
%  \end{equation}
Now we use the function $\Psi$ as a Lyapunov functional (cf. \cite{HL09}) and do some estimates for it. According to the definition of the collision set $I$, each $x_j$, $j\notin I$ will not collide with any other particles. Therefore, there exists a nonnegative random variable $\delta$ which is positive on $\{\tau^*\le T\}$ such that a.s. on $\{\tau^*\le T\}$,
  \begin{equation}\label{D7}
    |x_i-x_j|\ge \delta>0, \quad \text{uniformly on } [0,\tau^*),\quad \forall i\in I, j\notin I.
  \end{equation}
  Indeed, we can set $\delta:=\inf_{t\in[0,\tau^*)} |x_i(t)-x_j(t)|$ for $i\in l$ and $j\notin l$. Then, for the term $\Xi$ in \eqref{D4}, we use \eqref{est-2} and \eqref{D7} as well as the assumption that $\psi$ is non-increasing, to have
  \begin{equation}\label{D8}
    \begin{split}
      \int_0^{t\wedge\sigma} |\Xi(s)| ds & \le \frac{4\lambda |I|(N-|I|)}{N} (\psi(\delta\wedge r_0) \vee |\psi_*|) \int_0^{t\wedge\sigma} \|v(s)\|_2 ds \\
         & \le 4\lambda N (\psi(\delta\wedge r_0) \vee |\psi_*|) \|v(0)\|_2 \int_0^{t\wedge\sigma} \exp\left(-\frac{1}{2}\int_0^s D^2(u) du + \int_0^s D(u)dW(u) \right) ds,
    \end{split}
  \end{equation}
  where $r_0>0$ is the first zero of $\psi$, that is,
  \begin{equation*}
    r_0 := \inf\{r>0: \psi(r) = 0 \},
  \end{equation*}
  and we adopt the convention $\inf\emptyset = +\infty$ here, so that if $\psi$ has no zeros, or equivalently, $\psi>0$, then $r_0 = +\infty$.
  %Let $\{\tau_n\}$ the sequence of stopping times defined in \eqref{tau-n}.
  On the other hand, we define a sequence of stopping time as below,
  \begin{equation*}
    \sigma_n := \inf\left\{t\ge0: \min_{j\in I, j\ne l}|x_j(t)-x_l(t)| \le a_n \right\}, \quad n\ge1,
  \end{equation*}
  where $\{a_n\}_{n\ge1}$ is a decreasing sequence of positive numbers that converges to $0$ as in the proof of Theorem \ref{local}. Then it is easy to see that on $\{\tau^*\le T\}$, almost surely, $\sigma_n<\tau^*$ and
  \begin{equation}\label{est-6}
    \lim_{n\to\infty}\min_{j\in I, j\ne l}|x_l(\sigma_n)-x_j(\sigma_n)| = 0.
  \end{equation}
  Now on the event $\{\tau^*\le T\}$, we combine \eqref{est-2}, \eqref{D5}, \eqref{D8} and the definition of $\sigma_n$ to have the following estimate for the Lyapunov functional $\Psi$,
  \begin{equation}\label{D10}
    \begin{aligned}
      &\ \frac{\lambda |I|}{N} \left| \sum_{j\in I, j\ne l} \Psi(|x_l(\sigma_n)-x_j(\sigma_n)|) \right| \\
      \le&\ \frac{\lambda |I|}{N}\sum_{j\in I, j\ne l} \left| \Psi(|x_l(0)-x_j(0)|) \right| + |\xi(\sigma_n)| + |\xi(0)| + \int_0^{\sigma_n} |\Xi(s)| ds + \left|\int_0^{\sigma_n} D(s)\xi(s) dW(s)\right| \\
      \le&\ \lambda(N-1) \max_{i\ne j} \left|\Psi\left(|x_i(0)-x_j(0)|\right) \right| + 4(N-1)\|v(0)\|_2 \sup_{0\le t\le T} \exp\left(-\frac{1}{2}\int_0^t D^2(s) ds + \int_0^t D(s)dW(s)\right) \\
      &\ + 4\lambda N (\psi(\delta\wedge r_0) \vee |\psi_*|) \|v(0)\|_2 \int_0^{T} \exp\left(-\frac{1}{2}\int_0^s D^2(u) du + \int_0^s D(u)dW(u)\right) ds \\
      &\ + \sup_{0\le t\le T}\left|\int_0^{t\wedge\sigma_n} D(s)\xi(s) dW(s)\right| \\
      =: &\ J_1+J_2+J_3+J^n_4.
    \end{aligned}
  \end{equation}
  Note that only the fourth term $J^n_4$ depends on $n$ on the RHS.\newline

\noindent $\bullet$ (Step 3). In this step, we will estimate $J_i$'s in \eqref{D10} term by term, and then induce a contradiction. From \eqref{est-2-1} and \eqref{est-2} we know that on $\{\tau^*\le T\}$, for all $j\in I$,
\begin{equation}\label{est-21}
  \begin{split}
    &\ |x_l(\sigma_n)-x_j(\sigma_n)| \le 2\|x(\sigma_n)\|_2 \le \int_0^{\sigma_n} \|v(t)\|_2dt \\
    \le&\ \|v(0)\|_p \int_0^T \exp\left( -\frac{1}{2}\int_0^t D^2(s) ds + \int_0^t D(s)dW(s)\right) dt < \infty.
  \end{split}
\end{equation}
Since $\psi$ is locally bounded on $(0,\infty)$, its primitive $\Psi$ is locally Lipschitz and hence also locally bounded on $(0,\infty)$. This together with \eqref{est-6}, \eqref{est-21} and the assumption $|\Psi(r)|\to\infty$ as $r\to0$ implies that
\begin{equation*}
  \{\tau^*\le T\} \subset \left\{ \lim_{n\to\infty}\left| \sum_{j\in I, j\ne l} \Psi(|x_l(\sigma_n)-x_j(\sigma_n)|)\right| = \infty \right\}.
\end{equation*}
We apply this to \eqref{D10} and have
  \begin{equation}\label{D11}
    \begin{split}
      &\ \P(\tau^*\le T)\\
      =&\ \P\left( \tau^*\le T, \lim_{n\to\infty}\left| \sum_{j\in I, j\ne l} \Psi(|x_l(\sigma_n)-x_j(\sigma_n)|)\right| = \infty\right) \\
      \le&\ \P(J_1=\infty) + \P(J_2=\infty) + \P(\tau^*\le T, J_3=\infty) + \P\left(\liminf_{n\to\infty} J^n_4=\infty\right) \\
      =&\ \P(J_1=\infty) + \lim_{K\to\infty}\P(J_2 \ge K) + \lim_{K\to\infty}\P( \tau^*\le T, J_3 \ge K) \\
      &\ + \lim_{K\to\infty}\P\left(\liminf_{n\to\infty} J^n_4\ge K \right).
    \end{split}
  \end{equation}
  In order to estimate the term $J_1$, we split in two situations. Note that $\Psi$ is locally bounded on $(0,\infty)$. %When $\alpha=1$, the function $\Psi$ is the log-type function according to \eqref{D6}, and thus tends to infinity at both zero and infinity.
  In the case that $\limsup_{r\to\infty}|\Psi(r)| < \infty$, $\Psi$ is bounded on $[r,\infty)$ for all $r>0$. Then the assumption $|\Psi(r)|\to+\infty$ as $r\to0$ yields
  \begin{equation}\label{D13}
    \begin{split}
      \P(J_1=\infty) &= \P\left( \max_{i\ne j} \left|\Psi\left(|x_i(0)-x_j(0)|\right) \right| = \infty \right) \\
      &= \P\left( \exists i\ne j, |x_i(0)-x_j(0)| = 0 \right) \\
      &= \P\left( \min_{i\ne j}|x_i(0)-x_j(0)| = 0 \right) \\
      &= 0,
    \end{split}
  \end{equation}
  where the last equality uses the collisionlessness of $\mu_0$ as in \eqref{initial-non-coll}.
%  Then we denote $m(t):=\min_{i\ne j}|x_i(t)-x_j(t)|$ and $M(t)=\max_{i\ne j}|x_i(t)-x_j(t)|$, and immediately have $m(t)\le|x_i(t)-x_j(t)| \le 2\|x(t)\|_2$. Therefore, we have the following estimate,
  In the case that $\limsup_{r\to\infty}|\Psi(r)| = \infty$, $\Psi$ has singularity at both 0 and $\infty$. Hence,
%  \begin{equation}\label{est-16}
%    \begin{split}
%      &\P(J_1=\infty)\\
%      %&=\lim_{K\to\infty}\P\bigg( \left|\Psi\left(m(0)\right) \right| \ind_{\{m(0)\le1\}} \vee |\Psi(M(0))| \ind_{\{M(0)\ge1\}} \ge \frac{K}{\lambda(N-1)} \bigg)\\
%      & \leq \lim_{K\to\infty}\P\bigg( \left|\Psi\left(m(0)\right) \right| \ind_{\{m(0)\le1\}} \vee |\Psi(2\|x(0)\|_2)| \ind_{\{2\|x(0)\|_2\ge1\}} \ge \frac{K}{\lambda(N-1)} \bigg) \\
%      & \le \lim_{K\to\infty}\P\left( m(0) \le c(\lambda,N) e^{-K} \wedge 1 \right) + \lim_{K\to\infty}\P\left( 2\|x(0)\|_2 \ge c(\lambda,N) e^{K} \vee 1 \right) \\
%         & = \P\left( \forall K\ge1, m(0) \le c(\lambda,N) e^{-K} \wedge 1 \right) +0 \\
%         & = \P\left( m(0) = 0 \right) \\
%         & = 0,
%    \end{split}
%  \end{equation}
  \begin{equation}\label{est-16}
    \begin{split}
      \P(J_1=\infty) &= \P\left( \max_{i\ne j} \left|\Psi\left(|x_i(0)-x_j(0)|\right) \right| = \infty \right) \\
      &= \P\left( \exists i\ne j, |x_i(0)-x_j(0)| = 0 \text{ or } |x_i(0)-x_j(0)| = \infty\right) \\
      &\le \P\left( \min_{i\ne j}|x_i(0)-x_j(0)| = 0 \right) + \P\left( \|x(0)\|_2= \infty\right) \\
      &= 0,
    \end{split}
  \end{equation}
  where the second term at RHS of the third inequality vanishes since the distribution of $\|x(0)\|_2$ is inner regular (or tight) as a probability measure on $(\R,\B(\R))$.
%  and the last equality uses the absolute continuity of the first marginal distribution of $\mu_0$ in a same fashion as \eqref{initial-non-coll}. On the other hand, in case of $\alpha>1$, the singularity only occurs at original for $\Psi$ due to \eqref{D6}. Therefore, we can apply the same argument as in \eqref{est-16} and the absolute continuity of the first marginal distribution of $\mu_0$ to have the following estimates,
%  \begin{equation}\label{D13}
%    \begin{split}
%      \P(J_1=\infty)  = \lim_{K\to\infty}\P\left( \left|\Psi\left(m(0)\right) \right| \ge \frac{K}{\lambda(N-1)} \right) = \P\left( m(0) = 0 \right) = 0.
%    \end{split}
%  \end{equation}
  For the term $J_2$, we first split in two parts, which will be much clearer and simpler to estimate. In fact, we have
  \begin{equation*}
    \begin{split}
      \P(J_2 = \infty) \le \P\left( \|v(0)\|_2 =\infty \right) + \lim_{K\to\infty} \P\left( \sup_{0\le t\le T} \exp\left(-\frac{1}{2}\int_0^t D^2(s) ds + \int_0^t D(s)dW(s) \right) \ge K \right).
       \end{split}
  \end{equation*}
   Then, we apply the regular property of the distribution of $\|v(0)\|_2$ to obtain the vanishing of the first term of the RHS in the above inequality. Moreover, we use the Doob's maximal inequality to deal with the second term above. Actually, we have
  \begin{equation}\label{D14}
    \begin{split}
      \P(J_2 = \infty)& \le 0 + \lim_{K\to\infty}\P\left( \sup_{0\le t\le T} e^{\int_0^t D(s)dW(s) } \ge K \right) \\
      &\le \lim_{K\to\infty}  \frac{1}{K}\E\left( e^{\int_0^T D(s)dW(s) }\right) \le \lim_{K\to\infty} \frac{1}{K}e^{\frac{1}{2}\int_0^T D^2(s) ds} = 0.
    \end{split}
  \end{equation}
 For the term $J_3$, similarly as what we did for $J_2$, we can first split $J_3$ into three parts as follows,
 \begin{equation*}
     \begin{split}
      \P(\tau^*\le T, J_3 = \infty) \le&\ \P\left( \tau^*\le T, \psi(\delta\wedge r_0) \vee |\psi_*| =\infty \right) + \P\left( \|v(0)\|_2 = \infty \right)  \\
      & + \lim_{K\to\infty}\P\left( \int_0^{T} \exp\left(-\frac{1}{2}\int_0^s D^2(u) du + \int_0^s D(u)dW(u) \right) ds \ge K \right).
         \end{split}
  \end{equation*}
  For the first term at RHS, since $|\psi_*|<\infty$ and $\psi$ only has singularity at 0 as explained in Remark \ref{remark-3}.(i), %and maybe also at $\infty$,
  it follows that
  \begin{equation*}
    \P\left( \tau^*\le T, \psi(\delta\wedge r_0) \vee |\psi_*| =\infty \right) %&= \P\left( \tau^*\le T, \delta = 0 \text{ or } \delta =\infty \right)
    = \P\left( \tau^*\le T, \delta = 0 \right) %+ \P\left( \delta =\infty \right) \\
    = 0,
  \end{equation*}
  due to the definition %\eqref{D7} and the the inner regularity of the distribution
  of $\delta$. Similar as before, the inner regularity of the distribution of $\|v(0)\|_2$ yields the vanishing of $\P( \|v(0)\|_2 = \infty )$. Finally, we apply Markov's inequality and Fubini's theorem to estimate
  \begin{equation}\label{est-18}
    \begin{split}
      &\ \P(\tau^*\le T, J_3 = \infty)  \\
      \le &\ 0 + 0 + \lim_{K\to\infty} K^{-1} \int_0^{T} \E \exp\left(-\frac{1}{2}\int_0^s D^2(u) du + \int_0^s D(u)dW(u)\right) ds \\
      =&\ \lim_{K\to\infty} K^{-1} T \\
      =&\ 0.
    \end{split}
  \end{equation}
Now, for the fourth term $J^n_4$ and any $p>0$, we use Markov's inequality  and Fatou's lemma to obtain the following estimate,
 \begin{equation*}
    \begin{split}
      \P\left(\liminf_{n\to\infty} J^n_4\ge K \right)  \le \frac{1}{K^{2p}} \liminf_{n\to\infty} \E\left[ \left( \sup_{0\le t\le T}\left|\int_0^{t\wedge\sigma_n} D(s)\xi(s) dW(s)\right| \right)^{2p} \right].
     \end{split}
  \end{equation*}
Next, for any $q>1$, we apply the Burkholder-Davis-Gundy (BDG) inequality \cite[Theorem 3.3.28]{KS91}, H\"older's inequality and \eqref{est-2} to above inequality and obtain
  \begin{align}
      &\ \P\left(\liminf_{n\to\infty} J^n_4\ge K \right)\label{est-19} \\
      \le&\ \left(\frac{D^*(T)}{K}\right)^{2p} \liminf_{n\to\infty} \E\left[\left( \int_0^{T\wedge\sigma_n} |\xi(s)|^2 ds \right)^{p} \right] \notag\\
      \le&\  \left(\frac{2(N-1)D^*(T)}{K}\right)^{2p}\E\left[\|v(0)\|_2^{2p}\left(\int_0^{T} \exp\left(-\int_0^s D^2(u) du + 2\int_0^s D(u)dW(u) \right) ds \right)^{p}\right] \notag\\
      \le&\ \left(\frac{2(N-1)D^*(T)}{K}\right)^{2p} \left[\E\left(\|v(0)\|_2^{2pq}\right)\right]^{1/q} \notag\\
      &\ \times\left( \E\left[\left(\int_0^{T} \exp\left(-\int_0^s D^2(u) du + 2\int_0^s D(u)dW(u)  \right) ds \right)^{pq'}\right] \right)^{1/q'},\notag
  \end{align}
 where we let $D^*(T)=\sup_{0\leq t\leq T}D(t)$ and $q'$ is the conjugate of $q$, i.e., $\frac{1}{q}+\frac{1}{q'}=1$. Choosing $q\ge \frac{\e}{2}+1$ and $p=\frac{\e}{2q}$, so we have $pq' \le1$. Now we apply Jensen's inequality and Fubini's theorem to have
  \begin{equation}\label{D17}
    \begin{split}
      &\ \E\left[\left(\int_0^{T} \exp\left(-\int_0^s D^2(u) du + 2\int_0^s D(u)dW(u)  \right) ds \right)^{pq'}\right] \\
      \le&\ \left(\int_0^{T} \E \exp\left(-\int_0^s D^2(u) du + 2\int_0^s D(u)dW(u) \right) ds \right)^{pq'} \leq \left(\int_0^{T} e^{\left(D^*(T)\right)^2 s} ds \right)^{pq'}.
    \end{split}
  \end{equation}
Note, as defined in the statement of the proposition, $\e$ has the property that $\int_{\R^{2N}}\|v\|_2^\e\mu_0(dx,dv)<\infty$. Therefore, we combine \eqref{est-19} and \eqref{D17} to obtain
  \begin{equation}\label{D18}
    \lim_{K\to\infty} \P\left(\liminf_{n\to\infty} J^n_4\ge K \right) \le \lim_{K\to\infty} c(N,T,D^*(T),\e,q) \left( \int_{\R^{2N}}\|v\|_2^\e\mu_0(dx,dv) \right)^{1/q} K^{-2p} = 0.
  \end{equation}
 Finally, we combine \eqref{D11}, \eqref{est-16}, \eqref{D13}, \eqref{D14}, \eqref{est-18} and \eqref{D18} to conclude that $\P(\tau^*\le T)=0$. By the arbitrary choice of $T$, we verify \eqref{D1} and thus finish the proof.
\end{proof}
In the proof of the Proposition \ref{P4.1}, the key step is to find a proper Lyapunov functional. In one-dimensional case, this can be done by the anti-derivative. However, in multi-dimensional case, the anti-derivative cannot be directly applied to the system. Therefore, we need to construct a new Lyapunov functional and use more delicate estimates to yield desired results.

Before the main theorem in the section, we will present an a priori lemma which shows that almost surely there is  no oscillation near the first collision time. This lemma actually holds in the general setting of Section \ref{sec:3}, as indicated by the proof.
\begin{lemma}\label{L4.2}
  Suppose the assumptions in Theorem \ref{local} hold. Let $(x,v)$ be a strong solution to the system \eqref{CS} up to the unique stopping time $\tau^*>0$. Then for all $i,j\in\{1,2,\cdots,N\}$, on the event $\{\tau^*<\infty, \liminf\limits_{t\uparrow\tau^*} |x_i(t)-x_j(t)| = 0\}$, it holds a.s. that $\lim\limits_{t\uparrow\tau^*} |x_i(t)-x_j(t)| = 0$.
\end{lemma}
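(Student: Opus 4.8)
The plan is to prove something slightly stronger: on $\{\tau^*<\infty\}$ the $\R^d$-valued process $t\mapsto x_i(t)-x_j(t)$ itself converges as $t\uparrow\tau^*$. Once a limit exists, $\liminf$ and $\lim$ of $|x_i(t)-x_j(t)|$ agree, which is exactly the claim. Fix $i,j$; since $\{\tau^*<\infty\}=\bigcup_{m\ge1}\{\tau^*\le m\}$, it is enough to argue on each event $\{\tau^*\le m\}$, and the point is that on such an event all the relevant quantities live on the \emph{compact} time interval $[0,m]$.

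First I would record a pathwise, uniform-in-time bound on the velocity over $[0,\tau^*)$. Since $D$ is locally Lipschitz on $[0,\infty)$ it is bounded on $[0,m]$, so $\int_0^m D^2(s)\,ds<\infty$ and the process $t\mapsto\int_0^t D(s)\,dW(s)$ is a continuous martingale on $[0,m]$; in particular a.e.\ path is bounded on $[0,m]$. Combining this with $\|v(0)\|_2<\infty$ a.s.\ (the initial datum is an $\R^{2Nd}$-valued random vector) and with $-\lambda\psi_* t$ bounded on $[0,m]$, the a priori estimate \eqref{est-2} of Theorem \ref{local} (taken with $p=2$) gives $M:=\sup_{0\le t<\tau^*}\|v(t)\|_2<\infty$ a.s.\ on $\{\tau^*\le m\}$.

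Next I would integrate the position equation. For any $t<\tau^*$, choosing $n$ with $\tau_n>t$ in the localizing sequence of Definition \ref{def-local}, part (iii) there yields $x_i(t)-x_j(t)=x_i(0)-x_j(0)+\int_0^t (v_i(s)-v_j(s))\,ds$; and $|v_i(s)-v_j(s)|\le\sqrt{2}\,\|v(s)\|_2\le\sqrt{2}\,M$ for $s\in[0,\tau^*)$. Hence, a.e.\ on $\{\tau^*\le m\}$, the map $t\mapsto x_i(t)-x_j(t)$ is Lipschitz on $[0,\tau^*)$, therefore uniformly continuous, and so extends continuously to $t=\tau^*$; write $\ell:=\lim_{t\uparrow\tau^*}(x_i(t)-x_j(t))\in\R^d$. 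Then $\lim_{t\uparrow\tau^*}|x_i(t)-x_j(t)|=|\ell|$ exists and equals $\liminf_{t\uparrow\tau^*}|x_i(t)-x_j(t)|$. In particular, on the sub-event of $\{\tau^*\le m\}$ where this $\liminf$ is $0$ we get $\lim_{t\uparrow\tau^*}|x_i(t)-x_j(t)|=0$. Taking the union over $m\ge1$ gives the statement.

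The argument invokes only Theorem \ref{local} and Definition \ref{def-local}, so it is valid already in the general singular setting of Section \ref{sec:3}, as asserted. There is no real obstacle here: the singular communication weight enters only the \emph{velocity} equation, while the relative position is a time-integral of a velocity difference that \eqref{est-2} controls uniformly on $[0,\tau^*)$; the only point needing minor care is that this control be uniform in $t$, which is immediate once one passes to the events $\{\tau^*\le m\}$ and uses that the Brownian stochastic integral has continuous --- hence bounded --- paths on $[0,m]$.
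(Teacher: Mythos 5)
Your proof is correct, and it takes a genuinely different (and considerably more elementary) route than the paper. The paper's argument is an up-crossing/down-crossing construction: it introduces the alternating stopping times $\sigma_n,\zeta_n$ marking excursions of $|x_i-x_j|$ between levels $b$ and $a$, and then rules out infinitely many such excursions before a finite $\tau^*$ by estimating $\E\bigl[\prod_n Z_n\bigr]=0$ via the Markov property and a uniform conditional bound $\e_0<1$ on $\E\bigl(e^{-(\zeta_{m+1}-\sigma_{m+1})}\mid\F_{\sigma_{m+1}}\bigr)$; the upshot is that after some finite crossing the path stays below $b$, and since $b$ is arbitrary the limit is $0$. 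You instead observe that \eqref{est-2}, together with the pathwise boundedness of the continuous stochastic integral on the compact $[0,m]$ and the a.s.\ finiteness of $\|v(0)\|_2$, gives a finite $M=\sup_{0\le t<\tau^*}\|v(t)\|_2$ a.s.\ on $\{\tau^*\le m\}$, so that $t\mapsto x_i(t)-x_j(t)$ is $\sqrt2\,M$-Lipschitz on $[0,\tau^*)$ and therefore extends continuously to $t=\tau^*$. This is simpler, avoids any use of the Markov property or conditional expectations, and in fact proves a stronger statement than the lemma: the $\R^d$-valued limit $\lim_{t\uparrow\tau^*}\bigl(x_i(t)-x_j(t)\bigr)$ exists a.s.\ on $\{\tau^*<\infty\}$, from which the claimed $\liminf=\lim$ of the modulus is immediate. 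Both proofs ultimately rest on the same a priori velocity bound \eqref{est-2}; yours exploits it directly rather than through an oscillation-counting argument, and as you note, it relies only on Theorem \ref{local} and Definition \ref{def-local}, so it is valid already in the general singular setting of Section \ref{sec:3}.
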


\begin{proof}
  For any positive real numbers $a$ and $b$ such that $0<a<b$, we define two sequences $\{\sigma_n\}$ and $\{\zeta_n\}$ of stopping times inductively as follows,
  \begin{equation}\label{D19}
  \begin{aligned}
    &\sigma_1 := \inf\{0\le t < \tau^*: |x_i(t)-x_j(t)|>b\}, \\
    &\zeta_n := \inf\{\sigma_n< t < \tau^*: |x_i(t)-x_j(t)|<a\}, \\
    &\sigma_{n+1} := \inf\{\zeta_n< t < \tau^*: |x_i(t)-x_j(t)|>b\},
  \end{aligned}
  \end{equation}
  where we employ the standard convention that the infimum of the empty set is infinity. Thus, we have either $\sigma_n=\infty$ or $\sigma_n<(\tau^*\wedge\infty)$, and so does $\zeta_n$. According to Definition \ref{def-local}, we have $|x_i-x_j|\ne0$ on $[0,\tau^*)$ a.s.. Therefore, we can always find a small enough positive number $K>0$ such that
  \begin{equation}\label{D20}
  \inf\{0\le t < \tau^*: |x_i(t)-x_j(t)|\ge K\}<\infty,\quad \text{a.s.}.
  \end{equation}
Then we let $b<K$. According to \eqref{D20}, the choice of $b$ and the definition of $\sigma_1$ in \eqref{D19}, we obtain that
\[\sigma_1<(\tau^*\wedge\infty),\quad  \text{a.s.}.\]
\noindent $\bullet$ Claim:  If $\tau^*<\infty$ and $\liminf_{t\uparrow\tau^*} |x_i(t)-x_j(t)| = 0$, then there exists $n\ge1$ such that $\zeta_n<\tau^*$ and $\sigma_{n+1}=\infty$ with probability one.

Before the proof of the claim, we would like to explain the motivation. Actually, the claim shows that, if $\tau^*<\infty$ and $\liminf_{t\uparrow\tau^*} |x_i(t)-x_j(t)| = 0$, then there exists $n\ge1$ such that $\zeta_n<\tau^*$ and $|x_i(t)-x_j(t)|\le b$ for all $t\in (\zeta_n,\tau^*)$ with probability one. As $b\in(0,K)$ can be chosen arbitrarily small, we obtain the desired result $\lim_{t\uparrow\tau^*} |x_i(t)-x_j(t)| = 0$ a.s.. Therefore, we only need to verify the claim in the rest of the proof.\newline

\noindent Proof of the claim: The claim is equivalent to that, $\tau^*=\infty$ or $\liminf_{t\uparrow\tau^*} |x_i(t)-x_j(t)| > 0$ a.s. on the event $\{\exists n\ge1, \text{s.t. } \sigma_n<\tau^* \text{ and } \zeta_n=\infty\}\cup \{\forall n\ge1, \sigma_n<\tau^*\} =: A\cup B$. On the event $A$, It is obvious that $\liminf_{t\uparrow\tau^*} |x_i(t)-x_j(t)| \ge a > 0$ holds due to \eqref{D19}. Thus, we only need to prove $\tau^*=\infty$ holds a.s. on the event $B$, which is equivalent to show that for any $T>0$,
  \begin{equation}\label{D21}
    \P(\tau^*\le T \text{ and } \forall n\ge1, \sigma_n<\tau^*) = 0.
  \end{equation}
 According to the definition of $\sigma_n$ and $\zeta_n$, it is obviously that $\sum_{n=1}^\infty (\zeta_n-\sigma_n) < \infty$ a.s. on the event $\{\tau^*\le T\} \cap \{\forall n\ge1, \sigma_n<\tau^*\}$. Therefore, in order to prove \eqref{D21}, we only need to prove that $\sum_{n=1}^\infty (\zeta_n-\sigma_n) = \infty$ also holds a.s. on the event $\{\tau^*\le T\} \cap \{\forall n\ge1, \sigma_n<\tau^*\}$. More precisely, we need to prove that
  \begin{equation*}
    \prod_{n=1}^\infty \left( \ind_{\{\sigma_n<\tau^*\le T\}} e^{-(\zeta_n-\sigma_n)} \right) = \left( \prod_{n=1}^\infty \ind_{\{\sigma_n<\tau^*\le T\}} \right) \exp\left( - \sum_{n=1}^\infty (\zeta_n-\sigma_n)\right) = 0, \quad\text{a.s.,}
  \end{equation*}
which is obviously equivalent to show that
  \begin{equation}\label{est-12}
 Z_n:=\ind_{\{\sigma_n<\tau^*\le T\}} e^{-(\zeta_n-\sigma_n)},\quad   \E \left[ \prod_{n=1}^\infty Z_n\right] = 0.
  \end{equation}
 In the following, we will prove \eqref{est-12} and thus finish the proof of the claim. According to the Markov property, we have for $m\ge 1$ that,
  \begin{equation}\label{est-11}
       \ \E \left[ \prod_{n=1}^{m+1} Z_n \Bigg| \F_{\sigma_{m+1}} \right] =\ \ind_{\{\sigma_{m+1}<\tau^*\le T\}} \E \left( e^{-(\zeta_{m+1}-\sigma_{m+1})} \Big| \F_{\sigma_{m+1}} \right) \prod_{n=1}^{m} Z_n.
  \end{equation}
  Next, we define $d_{ij}:=|x_i-x_j|$. %and $M_t := \int_0^t D(s) dW(s)$. Then $M=\{M_t\}_{t\ge0}$ is a continuous martingale since $D$ is locally bounded. %with quadratic variation $[M]_t = \int_0^t D(s)^2 ds$.
  Then similar to \eqref{est-20}, we have
  \begin{equation*}
    |d_{ij}'| = \frac{|\langle x_i- x_j, v_i-v_j\rangle|}{|x_i- x_j|} \le |v_i-v_j| \le 2\|v\|_2.
  \end{equation*}
  Thus for $t\in [0,\tau^*-\sigma_{m+1})$, we %apply \eqref{est-2} and H\"older's inequality to obtain
  have the following estimates,
   \begin{equation*}%\label{D24}
    \begin{split}
      |d_{ij}(t+\sigma_{m+1})- d_{ij}(\sigma_{m+1})| &\le 2 \int_{\sigma_{m+1}}^{t+\sigma_{m+1}} \|v(s)\|_2ds = 2 \int_0^t \|v(s+\sigma_{m+1})\|_2ds.
%         & \le 4 \int_{\sigma_{m+1}}^{t+\sigma_{m+1}} \|v(0)\|_2 e^{M_s} ds \\
%          & = 4 \|v(0)\|_2 e^{M_{\sigma_{m+1}}} \int_0^t e^{M_{s+\sigma_{m+1}}-M_{\sigma_{m+1}}} ds \\
%         &\le 4 \|v(0)\|_2 e^{M^*_T} \left( \int_0^t e^{2(M_{s+\sigma_{m+1}}-M_{\sigma_{m+1}})} ds \right)^{1/2} t^{1/2},
    \end{split}
  \end{equation*}
  %where $M^*_t:=\sup_{0\le s \le t} M_s$. As $D$ is local Lipschitz on $[0,+\infty)$, the quantity $M^*_T$ is a.s. finite by BDG inequality.
  %Then, we let $N(t):= M(t+\sigma_{m+1})- M(\sigma_{m+1})$. According to Doob's optional stopping theorem, the process $N=\{N(t),\F_{t+\sigma_{m+1}}\}_{t\ge0}$ is a continuous martingale on $\{\sigma_{m+1}<\infty\}$ with respect to the regular conditional probability $\P(\cdot|\F_{\sigma_{m+1}})$. Moreover, $N$ is independent of $\F_{\sigma_{m+1}}$, the quadratic variation of $N$ is $[N]_t = \int_{\sigma_{m+1}}^{t+\sigma_{m+1}} D(s)^2 ds$. By the martingale representation of time-changing \cite[Theorem 3.4.6]{KS91}, there exists a Brownian motion $B=\{B(t)\}_{t\ge0}$ which is independent of $\F_{\sigma_{m+1}}$ such that $N(t) = B([N]_t)$.
  Hence, %we apply \eqref{D24} to have
  \begin{equation}\label{est-7}
    \begin{split}
      &\ \left\{ |d_{ij}(t+\sigma_{m+1})- d_{ij}(\sigma_{m+1})|> b-a \right\} \\
      \subset&\ \left\{ 2\int_0^t \|v(s+\sigma_{m+1})\|_2 ds > b-a \right\},
      %\left\{ 16 \|v(0)\|_2^2 e^{2M^*_T} t > b-a \right\} \cup \left\{ \int_0^t e^{2(M_{s+\sigma_{m+1}}-M_{\sigma_{m+1}})} ds > b-a \right\},
    \end{split}
  \end{equation}
  For each real number $c>0$, we define a stopping time
  $$
  \eta_c:= \inf\left\{t\ge0: %\int_0^t e^{2(M_{s+\sigma_{m+1}}-M_{\sigma_{m+1}})} ds
  2\int_0^t \|v(s+\sigma_{m+1})\|_2 ds> c\right\}.
  $$
  Obviously, the process %$\int_0^\cdot e^{2(M_{s+\sigma_{m+1}}-M_{\sigma_{m+1}})} ds$
  $\int_0^\cdot \|v(s+\sigma_{m+1})\|_2 ds$ is a continuous increasing process on $[0,\tau^*-\sigma_{m+1})$, and it starts from the origin since $\|v\|_2$ is almost surely finite due to \eqref{est-2}. Therefore, we have $\eta_c>0$ a.s. for each $c>0$. Then, according to \eqref{est-7}, we have the following estimates,
  \begin{equation}\label{D25}
    \inf\{t\ge0: |d_{ij}(t+\sigma_{m+1})- d_{ij}(\sigma_{m+1})| > b-a \} \ge %\frac{b-a}{16 \|v(0)\|_2^2 e^{2M^*_T}} \wedge
    \eta_{(b-a)} > 0, \quad \text{a.s.}.
  \end{equation}
  According to the definition of $\sigma_n$ and $\zeta_n$ in \eqref{D19} and the first inequality in \eqref{D25}, we obtain
  \begin{equation}\label{est-23}
    \ind_{\{\sigma_{m+1}<\tau^*\le T\}} \E \left( e^{-(\zeta_{m+1}-\sigma_{m+1})} \Big| \F_{\sigma_{m+1}} \right) \le \E \left[ \exp\left( - %\frac{b-a}{16 \|v(0)\|_2^2 e^{2M^*_T}} \wedge
    \eta_{(b-a)} \right) \right] =: \e_0.
  \end{equation}
  It follows from the second inequality in \eqref{D25} that $\e_0<1$. %Indeed, it is obvious that $\e_0\le1$. If we assume by contrast that $\e_0=1$, then the power of the exponent in \eqref{est-23} must equal to 0 a.s., which contradicts with \eqref{D25}.
  Thus, \eqref{est-23} together with \eqref{est-11} yields that for all $m\ge1$,
 \begin{equation}\label{D26}
 \begin{aligned}
   &\ \E \left[ \prod_{n=1}^{m+1}Z_n \right] = \E\left[\E \left[ \prod_{n=1}^{m+1} Z_n \Bigg| \F_{\sigma_{m+1}} \right] \right]\\
   =&\ \E\left[\ind_{\{\sigma_{m+1}<\tau^*\le T\}} \E \left( e^{-(\zeta_{m+1}-\sigma_{m+1})} \Big| \F_{\sigma_{m+1}} \right) \prod_{n=1}^{m} Z_n\right]\leq \e_0\E\left[ \prod_{n=1}^{m} Z_n\right].
 \end{aligned}
 \end{equation}
 According to \eqref{D19} and the definition of $Z_n$ in \eqref{est-12}, we have $Z_n\leq 1$. Therefore, we apply \eqref{D26} and inductive criteria to obtain  that
  \begin{equation*}
    \E \left[ \prod_{n=1}^\infty Z_n \right] \le \E \left[ \prod_{n=1}^{m+1} Z_n \right] \le \e_0^{m+1},\quad \forall m\geq 1.
  \end{equation*}
  By the fact that $\e_0<1$ and the arbitrary choice of $m$, we conclude that  \eqref{est-12} holds, and thus finish the proof of the claim.
\end{proof}

Now we are ready to prove the main theorem in this section, which extends the result in Proposition \ref{P4.1} and shows the collision-avoidance in multi-dimensional case, which naturally implies the global existence of the strong solution. It is worth to note that the assumption on $\psi$ is slightly different from that of Proposition \ref{P4.1}.
\begin{theorem}[Collision-avoidance and global existence]\label{coll-avoid}
  %Let $\alpha\ge1$ and $d\ge1$. Suppose $\mu_0$ has absolutely continuous first marginal distribution, and there exists $r>0$ such that $\int_{\R^{2Nd}}\|v\|_2^r\mu_0(dx,dv)<\infty$.
  Let $d\ge1$. Suppose the assumptions in Theorem \ref{local} hold. Assume in addition that
  \begin{itemize}
\item[(i)] there exists $\e>0$ such that $\int_{\R^{2N}}\|v\|_2^\e\mu_0(dx,dv)<\infty$, and
\item[(ii)] $\psi$ is global Lipschitz on $[r,\infty)$ for every $r>0$, and %$\psi(r)\to+\infty$ and
    $|\Psi_*(r)|\to+\infty$ as $r\to0$, where $\Psi_*$ is a primitive function of the infimum function $\psi_*(r) := \inf_{0< s\le r}\psi(s)$.
\end{itemize}
%  (i). there exists $\e>0$ such that $\int_{\R^{2N}}\|v\|_2^\e\mu_0(dx,dv)<\infty$, and \\
%  (ii). $\psi$ is global Lipschitz on $[r,\infty)$ for every $r>0$, and %$\psi(r)\to+\infty$ and
%  $|\Psi(r)|\to+\infty$ as $r\to0$, where $\Psi$ is a primitive function of $\psi$. \\
  Then the system \eqref{CS} admits a unique global strong solution. Moreover, with probability one the trajectories of this solution do not collide in finite time, i.e.,
  \begin{equation}\label{D27}
    \P(x_i(t) \ne x_j(t), \forall 1\le i\ne j\le N, \forall t\ge0 ) = 1.
  \end{equation}
\end{theorem}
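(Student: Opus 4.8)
As in Lemma \ref{P4.1}, Theorem \ref{local} gives a unique maximal strong solution up to $\tau^*$, and it suffices to prove $\P(\tau^*\le T)=0$ for every $T>0$. Assume not, and work on the event $\{\tau^*\le T\}$. Define the collision index set $I$ exactly as in the one-dimensional proof; by \eqref{collision-limit} it is nonempty a.e.\ on this event, and by Lemma \ref{L4.2} every pair $i,j\in I$ realizing the $\liminf$ in fact satisfies $\lim_{t\uparrow\tau^*}|x_i(t)-x_j(t)|=0$. Since $|x_i-x_k|\le|x_i-x_j|+|x_j-x_k|$, the relation ``collides with'' is transitive, so $I$ splits into disjoint clusters each of which collapses to a single point, while $|x_i(t)-x_k(t)|\ge\delta>0$ uniformly on $[0,\tau^*)$ whenever $i,k$ lie in different clusters or $k\notin I$ (the analogue of \eqref{D7}). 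Fixing one such collapsing cluster and renaming it $I$, it remains to contradict the fact that all distances $|x_i(t)-x_j(t)|$, $i,j\in I$, tend to $0$ as $t\uparrow\tau^*$.

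The scalar functional used in dimension one was built from a primitive $\Psi$ of $\psi$; in several dimensions the substitute must involve the \emph{radial relative velocities} $w_{ij}:=\frac{d}{dt}|x_i-x_j|=\langle\frac{x_i-x_j}{|x_i-x_j|},v_i-v_j\rangle$, because only $|x_i-x_j|$, and not $|x_i-x_j|^2$, keeps the sharp exponent $\alpha\ge1$. Applying It\^o's formula to $|x_i-x_j|$ and then to $w_{ij}$, one finds
\[
  d\Big(w_{ij}+\tfrac{2\lambda}{N}\Psi(|x_i-x_j|)\Big)=\Big(\tfrac{|v_i-v_j|^2-w_{ij}^2}{|x_i-x_j|}+R_{ij}\Big)\,dt+D\,w_{ij}\,dW,
\]
where the singular self-interaction $-\tfrac{2\lambda}{N}\psi(|x_i-x_j|)w_{ij}$ has been absorbed into $\tfrac{2\lambda}{N}\frac{d}{dt}\Psi(|x_i-x_j|)$, the first drift term is nonnegative by Cauchy--Schwarz, and $R_{ij}$ collects the interactions with the remaining particles. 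Summing over $i,j\in I$ (or, equivalently, working with the cluster centre-of-mass version, which makes the external forcing bounded) produces a functional $L$ for which, after integrating against a localizing sequence $\sigma_n\uparrow\tau^*$ of stopping times as in \eqref{D5}, the quantity $\tfrac{2\lambda}{N}\big|\sum_{i,j\in I}\Psi(|x_i(\sigma_n)-x_j(\sigma_n)|)\big|$ is bounded above by four terms of the same type as $J_1,\dots,J_4^n$ in Lemma \ref{P4.1}: the initial data $\Psi(|x_i(0)-x_j(0)|)$ and $w_{ij}(0)$ (a.s.\ finite by absolute continuity of the first marginal and tightness, cf.\ \eqref{D13}--\eqref{est-16}); the supremum of the exponential-type process from \eqref{est-2}, controlled by Doob's inequality as in \eqref{D14}; the time integral of $R_{ij}$; and the stochastic integral $\int_0^{\cdot\wedge\sigma_n}D\,w_{ij}\,dW$, controlled by the Burkholder--Davis--Gundy inequality, H\"older with $q\ge\frac{\e}{2}+1$ and $p=\frac{\e}{2q}$, Jensen and Fubini, exactly as for $J_4^n$, using $\int\|v\|_2^\e\,\mu_0<\infty$. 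Since $\psi_*>-\infty$ forces $\Psi$ to be bounded above near $0$, the hypothesis $|\Psi(r)|\to\infty$ means $\Psi(r)\to-\infty$ as $r\to0^+$, so the left-hand side diverges to $+\infty$ along $\sigma_n$ on $\{\tau^*\le T\}$ while the right-hand side is a.s.\ finite. This contradiction yields $\P(\tau^*\le T)=0$, hence $\tau^*=\infty$ a.s.; Theorem \ref{local} then delivers the global strong solution, and Lemma \ref{L4.2} (used above to pass from $\liminf$ to $\lim$) gives \eqref{D27}.

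\textbf{The main obstacle} is the control of $R_{ij}$, i.e.\ of the interactions \emph{within} the collapsing cluster. For $k\notin I$ (or $k$ in another cluster) the distances stay $\ge\delta$, so $|R_{ij}|$ is bounded there by local boundedness of $\psi$, \eqref{est-2} and the analogue of \eqref{D7}, in complete parallel with the estimate \eqref{D8} of $\Xi$. The genuinely new difficulty, absent when $|I|=2$, is the presence of a third colliding particle $k\in I$: there $\psi(|x_i-x_k|)$ blows up, and, unlike the one-dimensional case where the antisymmetric double sum $\sum_{i,j\in I}\psi(|x_i-x_j|)(v_j-v_i)$ vanishes identically, the vector-valued radial projections no longer make these terms cancel outright. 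This is precisely where the strengthened hypothesis that $\psi$ be globally Lipschitz on every $[r,\infty)$ enters: it lets the differences $\psi(|x_i-x_k|)-\psi(|x_j-x_k|)$ be estimated by $|x_i-x_j|$ as long as the arguments stay away from $0$, and, combined with the no-oscillation property of Lemma \ref{L4.2} (so that within-cluster distances are eventually monotone) and a careful pairing of the triple sums, one reduces the dangerous singular contribution either to bounded terms or to terms of the form $\frac{d}{dt}\Psi(|x_i-x_k|)$, which can then be moved to the left-hand side of the functional inequality. Carrying out this reduction — sorting which combinations of the singular triple terms are integrable and which are absorbed — is the technical heart of the argument; everything else follows the template of Lemma \ref{P4.1}.
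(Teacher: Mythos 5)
Your overall strategy (reduce to $\P(\tau^*\le T)=0$, isolate a collapsing cluster via Lemma~\ref{L4.2}, build a Lyapunov functional from a primitive $\Psi$, and then reproduce the four-term estimate $J_1,\dots,J_4^n$ of Lemma~\ref{P4.1}) is the right shape, and the outer scaffolding matches the paper. However, the proposal has a genuine gap precisely at the step you yourself flag as ``the technical heart of the argument,'' and the mechanism you offer to close it does not work. You propose summing the radial relative velocities $w_{ij}=\langle\hat x_{ij},v_i-v_j\rangle$ over $i,j\in I$ and absorbing each singular self-term $\psi(|x_i-x_j|)w_{ij}$ into $\frac{d}{dt}\Psi(|x_i-x_j|)$. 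That disposes of the diagonal pair $(i,j)$ but not of the triple sums $\sum_{i,j,k\in I}\psi(|x_i-x_k|)\langle\hat x_{ij},v_k-v_i\rangle$: since the unit vector $\hat x_{ij}$ depends on the pair $(i,j)$ and not on $(i,k)$, these terms are \emph{not} proportional to $w_{ik}=\frac{d}{dt}|x_i-x_k|$, so they cannot be written as $\frac{d}{dt}\Psi(|x_i-x_k|)$, and the one-dimensional antisymmetric cancellation does not carry over. Your two suggested remedies both fail: the global Lipschitz bound on $[r,\infty)$ cannot be applied because inside the collapsing cluster all the arguments $|x_i-x_k|$ tend to $0$; and Lemma~\ref{L4.2} only upgrades the $\liminf$ to a genuine limit, it does \emph{not} give eventual monotonicity of within-cluster distances, so your pairing argument has no foothold.

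The paper avoids this obstacle by choosing a different functional. Instead of summing the scalar radial velocities, it works with the cluster $\ell^2$-norm $\interleave v\interleave_l^2 := \sum_{i,j\in I_l}|v_i-v_j|^2$. When one applies It\^o to $\interleave v\interleave_l^2$, the within-cluster contribution $Q_1$ \emph{does} cancel antisymmetrically — the triple sum collapses to $-\tfrac{2\lambda|I_l|}{N}\sum_{i,j\in I_l}\psi(|x_i-x_j|)|v_i-v_j|^2$ — because the quadratic form $\sum|v_i-v_j|^2$ preserves the full $i\leftrightarrow j$ and $i\leftrightarrow k$ symmetry that your linear radial projections destroy. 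Then the elementary bound $|x_i-x_j|\le\interleave x\interleave_l$ together with the monotonicity of $\psi$ yields $Q_1\le-\tfrac{2\lambda}{N}\psi(\interleave x\interleave_l)\interleave v\interleave_l^2$, so the entire singular within-cluster interaction is absorbed into $\frac{d}{dt}\Psi(\interleave x\interleave_l)$ via the Lyapunov functional $\mathcal E_{\pm}=\interleave v\interleave_l\pm\tfrac{\lambda}{N}\Psi(\interleave x\interleave_l)$. The global Lipschitz hypothesis is used only for the cross-cluster term $Q_2$, where $|x_i-x_k|\ge\delta>0$, exactly as in your bound for $R_{ij}$. So the discovery you are missing is not a finer decomposition of your radial functional but a change of functional altogether — using the quadratic $\interleave\cdot\interleave_l$ rather than the linear sum of $w_{ij}$ is what makes the singular triple terms cancel.
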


\begin{proof}
\noindent $\bullet$ (Step 1). In this step, we will introduce a family of collision sets and find an equivalent statement of \eqref{D27}. Let $\tau^*$  be the unique stopping time defined in Theorem \ref{local}. Same as Proposition \ref{P4.1}, in order to prove \eqref{D27}, we only need to prove  $\P(\tau^*\le T)=0$ for all $T>0$. For this purpose, we define for each $l\in\{1,2,\cdots,N\}$ a random index set as below,
  \begin{equation*}
    I_l := \left\{ i\in\{1,2,\cdots,N\}:\  \liminf_{t\uparrow\tau^*} |x_i(t)-x_l(t)| = 0 \right\}.
  \end{equation*}
We use $|I_l|$ to denote the cardinality of the random set $I_l$. Clearly, each $|I_l|$ is a random variable valued in $\{1,2,\cdots,N\}$. According to \eqref{collision-limit}, on the event $\{\tau^*<\infty\}$, there exist $i,j\in\{1,2,\cdots,N\}$ such that $i\ne j$ and $\liminf_{t\uparrow\tau^*} |x_i(t)-x_j(t)| = 0$ a.s.. Then we obtain that  $\cup_{l=1}^N (I_l\setminus \{l\})$ is nonempty a.s. on $\{\tau^*<\infty\}$. Therefore, to prove \eqref{D27} is equivalent to show that,
\begin{equation}\label{D29}
\P(\tau^*\le T, |I_l| >1 )=0,\quad  \forall T>0, \quad \forall l\in\{1,2,\cdots,N\}.
\end{equation}
From now on, we will fix a $T>0$ and an $l\in\{1,2,\cdots,N\}$, and prove $\P(\tau^*\le T, |I_l| >1 )=0$.

Furthermore, according to Lemma \ref{L4.2}, there will be no oscillation near the stopping time $\tau^*$ on $\{\tau^*<\infty\}$ a.s.. It follows that
  \begin{equation*}
    I_l := \left\{ i\in\{1,2,\cdots,N\}: \lim_{t\uparrow\tau^*} |x_i(t)-x_l(t)| = 0 \right\}, \quad\text{a.s. on } \{\tau^*<\infty\}.
  \end{equation*}
  Hence, similar as \eqref{D7}, there exists a nonnegative random variable $\delta$ which is positive on $\{\tau^*\le T\}$, such that the following statements hold a.s. on $\{\tau^*\le T, |I_l| >1\}$,
  \begin{align}
    \text{for } &\  i,j \in I_l, \ i\ne j,\  \lim_{t\uparrow\tau^*} |x_i(t)-x_j(t)| = 0, \label{est-13}\\
    \text{for } &\  i\in I_l,\  k\notin I_l, \ |x_i-x_k|\ge \delta>0\  \text{ uniformly on } [0,\tau^*). \label{est-14}
  \end{align}

 \noindent $\bullet$ (Step 2). In this step, we will study the dynamics of the system \eqref{CS}, and follow the idea in Proposition \ref{P4.1} to construct a proper Lyapunov functional. For simplicity, we introduce the following notations,
    \begin{equation*}
    \interleave x\interleave_l := \sqrt{\sum_{i,j\in I_l}|x_i-x_j|^2}, \quad \interleave v\interleave_l := \sqrt{\sum_{i,j\in I_l}|v_i-v_j|^2}.
  \end{equation*}
  According to \eqref{CS} and Section \ref{sec:2}, it is easy to get
  \begin{equation} \label{est-17}
    \interleave x\interleave_l \le 2 |I_l|^{\frac{1}{2}} \|x\|_2 \le 2 N^{\frac{1}{2}} \|x\|_2,\quad  \interleave v\interleave_l \le 2 |I_l|^{\frac{1}{2}} \|v\|_2 \le 2 N^{\frac{1}{2}} \|v\|_2,\quad  \left| \frac{d\interleave x\interleave_l}{dt} \right| \le \interleave v\interleave_l.
  \end{equation}
 Then we apply It\^o's formula to obtain that
  \begin{equation}\label{D32}
    \begin{split}
      d\interleave v\interleave_l^2 =&\ 2\sum_{i,j\in I_l} (v_i-v_j) \left( \frac{\lambda}{N}\sum_{k=1}^N \underbrace{\left(\psi(|x_i-x_k|)(v_k-v_i) - \psi(|x_j-x_k|)(v_k-v_j)\right)}_{=:\mathcal{A}_{ijk}} dt \right) \\
      &+2D\sum_{i,j\in I_l} (v_i-v_j)^2dW+ \sum_{i,j\in I_l} D^2 |v_i-v_j|^2 dt \\
      =&\ \frac{2\lambda}{N}\sum_{i,j,k\in I_l}(v_i-v_j) \mathcal{A}_{ijk}dt+  \frac{2\lambda}{N}\sum_{i,j\in I_l,k\notin I_l}(v_i-v_j)\mathcal{A}_{ijk}dt \\
      &+ 2D \interleave v\interleave_l^2 dW + D^2\interleave v\interleave_l^2 dt \\
      =:&\ Q_1dt + Q_2 dt + 2D \interleave v\interleave_l^2 dW + D^2\interleave v\interleave_l^2 dt.
    \end{split}
  \end{equation}
 For the term $Q_1$, as $i,j,k$ are all in the collision set $I_l$, we apply the antisymmetry property of the system \eqref{CS} and the fact that $\psi(\cdot)\ge\psi_*(\cdot)$ and the function $\psi_*(\cdot)$ is non-increasing, to obtain that on $\{|I_l| >1\}$,
  \begin{equation}\label{est-8}
    \begin{split}
      Q_1 &= -\frac{2\lambda|I_l|}{N} \sum_{i,j\in I_l} \psi(|x_i-x_j|) |v_i-v_j|^2 \\
      &\le -\frac{2\lambda}{N} \sum_{i,j\in I_l} \psi_*(|x_i-x_j|) |v_i-v_j|^2 \\
%      &\le -\frac{2\lambda}{N} \psi_*(\interleave x\interleave_l)  \sum_{i,j\in I_l} |v_i-v_j|^2 \\
      &= -\frac{2\lambda}{N}\psi_*(\interleave x\interleave_l)\interleave v\interleave_l^2.
    \end{split}
  \end{equation}
  %For $Q_2$, we note the communication weight $\psi$ is regular on the interval $[\delta,\infty)$. Therefore, we can define $L(\delta):=|\psi'(\delta)| = \alpha \delta^{-(\alpha+1)}$ to be the largest Lipschitz constant of $\psi$ on $[\delta,\infty)$.
  On the other hand, for every $r>0$, as $\psi$ is global Lipschitz on $[r,\infty)$ by the assumptions, we can define $L(r)$ to be the Lipschitz constant of $\psi$ over $[r,\infty)$, that is,
  \begin{equation}\label{lip-const}
    L(r):=\sup_{r\le r_1<r_2<\infty} \frac{|\psi(r_1)-\psi(r_2)|}{|r_1-r_2|}<\infty.
  \end{equation}
  Obviously, $L$ is non-increasing on $(0,\infty)$ and hence it can have singularity only possibly at $0$. Then by \eqref{est-14}, Cauchy--Schwarz inequality and the fact that $\psi(\cdot)\ge \psi_*:=\inf_{r>0}\psi(r)$, we have a.s. on $\{\tau^*\le T, |I_l| >1\}$ that
  \begin{equation}\label{est-9}
    \begin{split}
      Q_2 & =  \frac{2\lambda}{N} \sum_{i,j\in I_l,k\notin I_l} \left(\psi(|x_i-x_k|) - \psi(|x_j-x_k|)\right) (v_k-v_j)(v_i-v_j) \\
         &\quad -\frac{2\lambda}{N} \sum_{i,j\in I_l,k\notin I_l} \psi(|x_i-x_k|)|v_i-v_j|^2 \\
         & \le \frac{2\lambda}{N} L(\delta) \sum_{i,j\in I_l,k\notin I_l} |x_i- x_j| (v_k-v_j)(v_i-v_j) - \frac{2\lambda(N-|I_l|)\psi_*}{N} \interleave v\interleave_l^2 \\
         & \le \frac{4\lambda(N-|I_l|)}{N} L(\delta) \|v\|_2 \sum_{i,j\in I_l} |x_i- x_j||v_i-v_j| - 2\lambda\psi_* \interleave v\interleave_l^2 \\
         & \le 4\lambda L(\delta) \|v\|_2 \interleave x\interleave_l \interleave v\interleave_l - 2\lambda\psi_* \interleave v\interleave_l^2.
    \end{split}
  \end{equation}

  Next, %we set $\Psi$ to be the primitive of $\psi$ as in \eqref{D6}. Then
  we define a Lyapunov functional as follows,
  \begin{equation*}
    \mathcal E_{\pm} = \interleave v\interleave_l \pm \frac{\lambda}{N} \Psi_*(\interleave x\interleave_l).
  \end{equation*}
  Note that a similar Lyapunov functional was introduced in \cite{HL09} for the deterministic C-S model. In order to derive the stochastic differential $d\interleave v\interleave_l$ from $d\interleave v\interleave_l^2$ in \eqref{D32}, we need to apply It\^o's formula to the function $f(r) = r^{1/2}$. But this function is not of class $C^2$ at the origin, while class $C^2$ is a requirement of It\^o's formula. To overcome, we claim here that a.s. on $\{\tau^*\le T, |I_l| >1\}$, it holds $\interleave v(t)\interleave_l \ne 0$ for all $t\in [0,\tau^*)$. Indeed, if this claim is false, then there exists a random time $\varsigma\in [0,\tau^*)$ such that $\P(\interleave v(\varsigma)\interleave_l = 0, \tau^*\le T, |I_l| >1 )>0$. On the event $\{\interleave v(\varsigma)\interleave_l=0, \tau^*\le T, |I_l| >1\}$, we apply the comparison principle to \eqref{D32} by using \eqref{est-8} and \eqref{est-9}, in a similar fashion as we did in Proposition 2.1, and it follows that a.s. $\interleave v(t)\interleave_l=0$ for all $t\in[\varsigma,\tau^*)$. This means, by the definition of $\interleave v\interleave_l$, that all particles in the set $I_l$ with $|I_l| >1$ will stay relatively still after the random time $\varsigma$ satisfying $\varsigma\le \tau^*\le T$, while they will collide at time $\tau^*$. This leads to a contradiction to the minimality of $\tau^*$ as the first collision time. The claim follows so that we can apply \eqref{D32} and It\^o's formula to obtain the following stochastic differential over $[0,\tau^*)$,
  \begin{equation}\label{est-10}
    \begin{split}
      d\mathcal E_{\pm} & = \frac{1}{2\interleave v\interleave_l} d\interleave v\interleave_l^2 - \frac{1}{8\interleave v\interleave_l^3} d[\interleave v\interleave_l^2,\interleave v\interleave_l^2] \pm \frac{\lambda}{N} d\Psi_*(\interleave x\interleave_l) \\
%         & = \frac{(Q_1 + Q_2) dt}{2\interleave v\interleave_l} + D \interleave v\interleave_l dW + \frac{1}{2} D^2\interleave v\interleave_l dt - \frac{4D^2 \interleave v\interleave_l^4 dt}{8\interleave v\interleave_l^3}  \pm \frac{\lambda}{N} \psi(\interleave x\interleave_l) \frac{d\interleave x\interleave_l}{dt} dt \\
         & = \left[\frac{Q_1 + Q_2}{2\interleave v\interleave_l} \pm \frac{\lambda}{N} \psi_*(\interleave x\interleave_l) \frac{d\interleave x\interleave_l}{dt}\right] dt + D \interleave v\interleave_l dW.
    \end{split}
  \end{equation}
  We combine the fact that $\psi(\cdot)\ge \psi_*$ with \eqref{est-17}, \eqref{est-8} and \eqref{est-9} to have a.s. on $\{\tau^*\le T, |I_l| >1\}$ that
  \begin{equation*}
    \begin{split}
      \frac{Q_1 + Q_2}{2\interleave v\interleave_l} \pm \frac{\lambda}{N} \left[ \psi_*(\interleave x\interleave_l) - \psi_* \right] \frac{d\interleave x\interleave_l}{dt} &\le \frac{Q_1 + Q_2}{2\interleave v\interleave_l} + \frac{\lambda}{N} \left[ \psi_*(\interleave x\interleave_l) - \psi_* \right] \interleave v\interleave_l \\
      &\le 2\lambda L(\delta) \|v\|_2 \interleave x\interleave_l - \lambda\left( 1+\frac{1}{N} \right)\psi_* \interleave v\interleave_l,
    \end{split}
  \end{equation*}
  which implies
  \begin{equation}\label{D36}
    \begin{split}
      \frac{Q_1 + Q_2}{2\interleave v\interleave_l} \pm \frac{\lambda}{N} \psi_*(\interleave x\interleave_l) \frac{d\interleave x\interleave_l}{dt} &\le 2\lambda L(\delta) \|v\|_2 \interleave x\interleave_l + \lambda\left( 1+\frac{2}{N} \right)|\psi_*| \interleave v\interleave_l,
    \end{split}
  \end{equation}
  Now, we integrate \eqref{est-10} over $[0,t\wedge \sigma]$ for any stopping time $\sigma<\tau^*$ and apply \eqref{D36}, and obtain that a.s. on $\{\tau^*\le T, |I_l| >1\}$,
  \begin{equation}\label{est-15}
    \begin{split}
      &\interleave v(t\wedge \sigma)\interleave_l \pm \frac{\lambda}{N} \Psi_*(\interleave x(t\wedge \sigma)\interleave_l) \\
      \le&\ \interleave v(0)\interleave_l \pm \frac{\lambda}{N} \Psi_*(\interleave x(0)\interleave_l) + 2\lambda L(\delta) \int_0^{t\wedge \sigma} \|v(s)\|_2 \interleave x(s)\interleave_l ds \\
      &\  + \lambda\left( 1+\frac{2}{N} \right)|\psi_*| \int_0^{t\wedge \sigma} \interleave v(s)\interleave_l ds + \int_0^{t\wedge \sigma} D(s) \interleave v(s)\interleave_l dW(s).
    \end{split}
  \end{equation}
%  Define a sequence of stopping time
%  \begin{equation*}
%    \sigma_n := \inf\left\{t\ge0: \min_{i,j\in I, i\ne j}|x_i(t)-x_j(t)| \le a_n \right\}, \quad n\ge1,
%  \end{equation*}
%  where, as before, $a_n$ is a decreasing sequence of positive numbers that converges to $0$. Then it is easy to see that $\sigma_n<\tau$ a.s. and
  According to Definition \ref{def-local}, there always exists a sequence of stopping times $\{\sigma_n\}_{n\ge1}$ such that $\sigma_n\uparrow\tau^*$ and $\sigma_n<\tau^*$ on $\{\tau^*\le T\}$. Moreover, \eqref{est-13} yields a.s. on $\{\tau^*\le T, |I_l| >1\}$ that for all $i,j\in I_l, i\ne j$, $\lim_{n\to\infty} |x_i(\sigma_n)-x_j(\sigma_n)| = 0$, and hence
  \begin{equation}\label{D38}
    \lim_{n\to\infty} \interleave x(\sigma_n)\interleave_l = 0.
  \end{equation}
  We then substitute $\sigma$ in \eqref{est-15} by $\sigma_n$ and remove the velocity term on the left hand side in \eqref{est-15} to obtain a.s. on the event $\{\tau^*\le T, |I_l| >1\}$ that
  \begin{equation}\label{D39}
    \begin{split}
      & \frac{\lambda}{N} \left|\Psi_*(\interleave x(\sigma_n)\interleave_l) \right| \\
      \le&\ \frac{\lambda}{N} \left| \Psi_*(\interleave x(0)\interleave_l) \right| + \interleave v(0)\interleave_l + 2\lambda L(\delta) \sup_{0\le t\le T} \int_0^{t\wedge \sigma_n} \|v(s)\|_2 \interleave x(s)\interleave_l ds \\
      &\ + \lambda\left( 1+\frac{2}{N} \right)|\psi_*| \sup_{0\le t\le T} \int_0^{t\wedge \sigma_n} \interleave v(s)\interleave_l ds \\
      &\ + \sup_{0\le t\le T} \left| \int_0^{t\wedge \sigma_n} D(s) \interleave v(s)\interleave_l dW(s) \right| \\
      =:&\ J_1+J_2+J^n_3+J^n_4+J^n_5.
    \end{split}
  \end{equation}

 \noindent $\bullet$ (Step 3). In this step, we will estimate $J_i$'s term by term as we did in the proof of Proposition \ref{P4.1}. Actually, we apply \eqref{D38} and \eqref{D39} as well as the assumption that $|\Psi_*(r)|\to\infty$ as $r\to0$, to have
  \begin{equation}\label{D40}
    \begin{split}
      &\ \P(\tau^*\le T, |I_l| >1 )\\
      =&\ \P\left( \tau^*\le T, |I_l| >1, \lim_{n\to\infty}\left|\Psi_*(\interleave x(\sigma_n)\interleave_l) \right| = \infty \right) \\
      \le&\ \P(J_1=\infty,|I_l| >1) + \P(J_2=\infty) + \P\left( \tau^*\le T, \liminf_{n\to\infty} J^n_3=\infty \right) \\
      &\ + \P\left( \tau^*\le T, \liminf_{n\to\infty} J^n_4=\infty \right) + \P\left(\tau^*\le T, \liminf_{n\to\infty} J^n_5=\infty\right).
    \end{split}
  \end{equation}
  For the term $J_1$, %similar as in Proposition \ref{P4.1}, we denote $m(t)=\min_{i\ne j}|x_i(t)-x_j(t)|$ and
  we apply \eqref{est-17} to have
  \[ \min_{i\ne j}|x_i(0)-x_j(0)|\leq \interleave x(0)\interleave_l \le 2N^{\frac{1}{2}}\|x(0)\|_2.\]
  Observe that $\Psi_*$ only has singularity at 0 and maybe also at $\infty$ due to the assumptions. Then, with the same argument as \eqref{est-16}, we apply \eqref{est-17} and the collisionlessness of $\mu_0$ and the inner regularity of the distribution of $\|x(0)\|_2$ to obtain the following estimate,
%  \begin{equation}\label{D41}
%    \begin{split}
%      &\ \P(J_1=\infty,|I_l| >1) \\
%      =&\ \lim_{K\rightarrow+\infty}\P(J_1\ge K,|I_l| >1) \\
%      \le&\ \lim_{K\rightarrow+\infty}\P\bigg( \left|\Psi\left(m(0)\right) \right| \ind_{\{m(0)\le1\}} \vee \left|\Psi\left(2N^{1/2}\|x(0)\|_2\right)\right| \ind_{\left\{\alpha=1, 2N^{1/2}\|x(0)\|_2\ge1\right\}} \ge \frac{N K}{\lambda} \bigg) \\
%      =&\ 0.
%    \end{split}
%  \end{equation}
  \begin{equation}\label{D41}
    \begin{split}
      \P(J_1=\infty,|I_l| >1) =&\ \P\left( \left| \Psi_*(\interleave x(0)\interleave_l) \right| = \infty, |I_l| >1 \right) \\
      \le&\ \P\left( \interleave x(0)\interleave_l = 0, |I_l| >1 \right) + \P\left( \interleave x(0)\interleave_l = \infty \right) \\
      \le&\ \P\left( \min_{i\ne j}|x_i(0)-x_j(0)|= 0 \right) + \P\left( \|x(0)\|_2 = \infty \right) \\
      =&\ 0.
    \end{split}
  \end{equation}
  For $J_2$, we use \eqref{est-17} and the inner regularity of the distribution of $\|v(0)\|_2$ to get
  \begin{equation}\label{D42}
    \P(J_2=\infty) \le \P\left( \|v(0)\|_2 = \infty \right) = 0.
  \end{equation}
  For $J_3^n$ on $\{\tau^*\le T\}$, we combine \eqref{est-17} and \eqref{est-2} to have
    \begin{align*}
      J_3^n & =2\lambda L(\delta) \sup_{0\le t\le T} \int_0^{t\wedge \sigma_n} \|v(s)\|_2 \interleave x(s)\interleave_l ds \\
      &\le 2\lambda L(\delta) \sup_{0\le t\le T} \int_0^{t\wedge \sigma_n} \|v(s)\|_2 \left( \interleave x(0)\interleave_l + \int_0^s \interleave v(r)\interleave_l dr \right) ds \\
      &\le 4N^{\frac{1}{2}}\lambda L(\delta) \sup_{0\le t\le T} \int_0^{t\wedge \sigma_n} \|v(s)\|_2 \left( \|x(0)\|_2 + \int_0^s \|v(r)\|_2 dr \right) ds \\
      &= 4N^{\frac{1}{2}}\lambda L(\delta) \sup_{0\le t\le T} \left[ \|x(0)\|_2 \int_0^{t\wedge \sigma_n} \|v(s)\|_2 ds + \frac{1}{2} \left( \int_0^{t\wedge \sigma_n} \|v(s)\|_2 ds \right)^2 \right] \\
      &\le 4N^{\frac{1}{2}}\lambda L(\delta) \left[ \|x(0)\|_2 \|v(0)\|_2 \int_0^T e^{\left(-\frac{1}{2}\int_0^s D^2(u) du + \int_0^s D(u)dW(u)\right)} ds \right.\\
      &\hspace{2.3cm}\left.+ \frac{1}{2} \|v(0)\|_2^2 \left(\int_0^T e^{\left(-\frac{1}{2}\int_0^s D^2(u) du + \int_0^s D(u)dW(u) \right)} ds\right)^2\right].
    \end{align*}
  Since $L$ only has possible singularity at $0$ as we have seen straight after \eqref{lip-const}, it follows from the fact that the random variable $\delta$ is positive on $\{\tau^*\le T\}$ that
  \begin{equation*}
    \P\left( \tau^*\le T, L(\delta) = \infty \right) \le \P\left(\tau^*\le T, \delta = 0 \right) = 0.
  \end{equation*}
  Then we apply a similar argument as \eqref{est-18} to obtain that
%  \begin{equation}\label{D43}
%    \begin{split}
%    &\P(J_3=\infty)\\
%    &=  \lim_{K\rightarrow+\infty}\P(J_3\ge K) \\
%    &\le  \lim_{K\rightarrow+\infty}\P\left( \delta \le (4\lambda\alpha N^{\frac{1}{2}})^{\frac{1}{(\alpha+1)}} K^{-\frac{1}{2(\alpha+1)}} \right)\\
%    &\quad +  \lim_{K\rightarrow+\infty}\P\left( \|x(0)\|_2\ge 2^{-\frac{1}{2}} K^{\frac{1}{4}} \right) +  \lim_{K\rightarrow+\infty}\P\left( \|v(0)\|_2\ge 2^{-\frac{1}{4}} K^{\frac{1}{8}} \right) \\
%    &\quad+  \lim_{K\rightarrow+\infty}\P\left( \int_0^T \exp\left(-\frac{1}{2}\int_0^s D^2(u) du + \int_0^s D(u)dW(u) \right) ds \ge 2^{-\frac{1}{4}} K^{\frac{1}{8}} \right)\\
%    &=0.
%    \end{split}
%  \end{equation}
  \begin{equation}\label{D43}
    \begin{split}
      &\ \P\left(\tau^*\le T, \liminf_{n\to\infty} J_3^n=\infty\right) \\
      \le&\  \P\left( \tau^*\le T, L(\delta) = \infty \right) + \P\left( \|x(0)\|_2 = \infty \right) + \P\left( \|v(0)\|_2 = \infty \right) \\
      &\ +  \lim_{K\rightarrow+\infty}\P\left( \int_0^T \exp\left(-\frac{1}{2}\int_0^s D^2(u) du + \int_0^s D(u)dW(u) \right) ds \ge K \right)\\
      =&\ 0.
    \end{split}
  \end{equation}
  For the probability involving $J^n_4$, we derive in a similar fashion and use the fact that $|\psi_*|<\infty$,
  \begin{equation}\label{D2}
    \begin{split}
      &\ \P\left(\tau^*\le T, \liminf_{n\to\infty} J^n_4=\infty\right) \le \P\left( \tau^*\le T, \sup_{0\le t\le T} \int_0^{t\wedge \sigma_n} \|v(s)\|_2 ds = \infty\right) \\
      \le&\ \P\left( \int_0^T \exp\left(-\frac{1}{2}\int_0^s D^2(u) du + \int_0^s D(u)dW(u) \right) ds = \infty \right) = 0.
    \end{split}
  \end{equation}
  For the probability involving $J_5^n$, following the same way as \eqref{est-19}, we apply BDG inequality and \eqref{est-17} to derive that
  \begin{align*}
      &\ \P\left(\tau^*\le T, \liminf_{n\to\infty} J^n_5\ge K \right) \\
      \le&\ \left(\frac{D^*(T)}{K}\right)^{2p} \liminf_{n\to\infty} \E\left[ \left( \sup_{0\le t\le T}\left|\int_0^{t\wedge\sigma_n} \interleave v(s)\interleave_l dW(s)\right| \right)^{2p} \right] \\
      \le&\ \left(\frac{D^*(T)}{K}\right)^{2p} \liminf_{n\to\infty} \E\left[\left( \int_0^{T\wedge\sigma_n} \interleave v(s)\interleave_l^2 ds \right)^{p} \right] \\
      \le&\ \left(\frac{2ND^*(T)}{K}\right)^{2p} \E\left[\|v(0)\|_2^{2p}\left(\int_0^{T} \exp\left(-\int_0^s D^2(u) du + 2\int_0^s D(u)dW(u) \right) ds \right)^{p}\right].
  \end{align*}
 where $D^*(T)=\sup_{0\leq t\leq T}D(t)$ as in \eqref{est-19}. Hence, similar to \eqref{est-19}, we can find proper indexes $p,q$ and apply H\"older's inequality, Jensen's inequality and Fubini's theorem to yield
  \begin{equation}\label{D44}
    \lim_{K\to\infty} \P\left(\liminf_{n\to\infty} J^n_5\ge K \right) \le \lim_{K\to\infty} c(N,T,D^*(T),r,q) \left( \int_{\R^{2N}}\|v\|_2^\e\mu_0(dx,dv) \right)^{1/q} K^{-2p} = 0.
  \end{equation}
  Finally, we combine \eqref{D40}--\eqref{D44} to conclude that \eqref{D29} holds, i.e., $\P(\tau^*\le T, |I_l| >1 )=0$ for any $T>0$ and $l\in\{1,2,\cdots,N\}$, which finishes the proof.
\end{proof}

\section{Large time behavior}\label{sec:5}

We have defined conditional and unconditional flocking in mean in Definition \ref{def-cond-flocking} and \ref{def-flocking} respectively. In this section, we will investigate these large time behavior for the solution of the stochastic C-S model \eqref{CS}. In order to specify the assumptions for $\psi$, $D$ and the initial data to the large time behavior, we will assume throughout this section that the assumptions in Theorem \ref{local} and Theorem \ref{coll-avoid} are all in force, so that the system \eqref{CS} \emph{admits a global strong solution} $(x,v)$.

Recall that in Section \ref{sec:4}, we required $\psi$ to be sufficiently singular at the origin to avoid collisions. Now, for the large time behavior, the integrability of $\psi$ at infinity plays the role.
%Therefore, in the current section, we will further assume some the communication weight $\psi$ is an $\overline\R$-valued function on $[0,\infty)$ (where $\overline\R$ is the extended real line $\R\cup\{+\infty,-\infty\}$).
We have introduced two infimum notations in previous sections:
$$\psi_*:=\inf_{r\ge0}\psi(r), \quad \psi_*(r) := \inf_{0< s\le r}\psi(s).$$
Note that the second notation should not be confused with the first as it involves an independent variable.

Before the statements of the results, let us also recall some notions in probability theory. For a random variable $X$ and $1<q<\infty$, the $L^q$-norm of $X$ is defined by the quantity $\E(|X|^q)^{1/q}$, while the $L^\infty$-norm is defined by the essential supremum as follows,
\begin{equation*}
  \esssup X := \inf\{K\ge0: \P(|X|>K)=0\}.
\end{equation*}
The support of $X$ is defined to be the support of the distribution of it as a probability measure on $(\R,\B(\R))$. Obviously, if $X\in L^\infty(\Omega)$, or equivalently, $\esssup X<\infty$, then $X$ has compact support. It is well-known that if $X\in L^\infty(\Omega)$, then $\lim_{q\to\infty} \E(|X|^q)^{1/q} = \esssup X$. Therefore, without ambiguity, we will admit $q=\infty$ when we write $\E(|X|^q)^{1/q}$, and this quantity will indicate the $L^\infty$-norm of $X$.
%The following flocking results are similar to \cite[Theorem 4.1]{TLY14} and \cite[Theorem 3.2]{AH10}.
Now, we first study the case that the $\psi$ has positive lower bound.
\begin{proposition}\label{flocking-1}
  Let $p\ge2$, and the assumptions in Theorem \ref{local} and Theorem \ref{coll-avoid} be satisfied. Assume that the first and second marginal distributions of the initial distribution $\mu_0$ in \eqref{initial-cond} both have finite moment, i.e., $\E ( \|x(0)\|_p) < \infty$ and $\E ( \|v(0)\|_p) < \infty$. Suppose that $\psi$ has positive lower bound, that is, $\psi_*>0$.
%  With the assumptions in Proposition \ref{regular}, if one of the following conditions hold, \\
%  (i). for some $1<q< \infty$, $\E ( \|v(0)\|_p^q) < \infty$ and $\psi_* > \frac{D^2(q'-1)}{2\lambda}$,  where $q'$ is the conjugate of $q$, i.e., $\frac{1}{q}+\frac{1}{q'}=1$; \\
%  (ii). $\ess_{\omega\in\Omega} \|v(0)\|_p < \infty$ and $\psi_*>0$, \\
  %Assume in addition that the system \eqref{CS} admits a global strong solution $(x,v)$.
  Then the time-asymptotic flocking in mean occurs exponentially fast.
\end{proposition}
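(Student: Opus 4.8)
The plan is to read off the two conditions of Definition~\ref{def-flocking} directly from the a priori estimates of Theorem~\ref{local}, which — under the standing assumption of this section that \eqref{CS} admits a global strong solution — hold for that solution and all $t\ge 0$. As observed after Definition~\ref{def-flocking}, flocking in mean for \eqref{B1} is equivalent to $\lim_{t\to\infty}\E(\|v(t)\|_p)=0$ together with $\sup_{t\ge0}\E(\|x(t)\|_p)<\infty$ for the reduced system \eqref{CS}, so it suffices to prove these two statements, with explicit exponential rate for the first.

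For the velocity alignment I would start from \eqref{est-2}, which gives pathwise
$$\|v(t)\|_p \le \|v(0)\|_p\, e^{-\lambda\psi_* t}\, \mathcal E(t),$$
where $\mathcal E$ is the exponential process of $\int_0^\cdot D(s)\,dW(s)$. The key point — essentially the only point — is that $\mathcal E$ is a nonnegative local martingale with $\mathcal E(0)=1$, hence a supermartingale, so $\E(\mathcal E(t))\le 1$ for every $t$; one does not even need Novikov's criterion here, only the supermartingale inequality. Using that $(x^0,v^0)$ is independent of $W$, and hence $\|v(0)\|_p$ is independent of $\mathcal E(t)$, I would conclude
$$\E(\|v(t)\|_p)\le \E(\|v(0)\|_p)\, e^{-\lambda\psi_* t}\,\E(\mathcal E(t)) \le \E(\|v(0)\|_p)\, e^{-\lambda\psi_* t},$$
which tends to $0$ at exponential rate $\lambda\psi_*$ since $\psi_*>0$ and $\E(\|v(0)\|_p)<\infty$. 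For the aggregation I would integrate \eqref{est-2-1} to get $\|x(t)\|_p\le \|x(0)\|_p+\int_0^t\|v(s)\|_p\,ds$ pathwise, take expectations, interchange expectation and the time integral by Tonelli's theorem (the integrand is nonnegative and jointly measurable), and insert the bound just obtained, so that $\E(\|x(t)\|_p)\le \E(\|x(0)\|_p)+\E(\|v(0)\|_p)\int_0^\infty e^{-\lambda\psi_* s}\,ds = \E(\|x(0)\|_p)+\E(\|v(0)\|_p)/(\lambda\psi_*)$, uniformly in $t$.

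There is no genuine analytic obstacle: all the substance sits in the a priori estimate \eqref{est-2}, which is already established. The one conceptual point that must be handled with care — and which is exactly why $\psi_*>0$ alone suffices, with no smallness hypothesis on $D$, cf.\ Remark~\ref{R2.3} — is that one takes the \emph{plain} expectation of \eqref{est-2}, exploiting $\E(\mathcal E(t))\le 1$; passing instead to $\E(\|v(t)\|_p^2)$ would reintroduce the factor $\E(\mathcal E(t)^2)=\exp(\int_0^t D^2(s)\,ds)$ and force a constraint relating $\psi_*$ and $D$, which is the source of the phase transition in \cite{CS19,TLY14}. A minor technical remark is that one should record that \eqref{est-2-1}–\eqref{est-2} remain valid for the global strong solution under the weaker hypotheses imposed in this section, which is immediate from the assumed existence of such a solution together with the comparison argument used to prove Proposition~\ref{regular}.
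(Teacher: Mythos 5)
Your proof is correct and follows essentially the same route as the paper's: take the expectation of the pathwise bound \eqref{est-2}, exploit the independence of the initial data from $W$ together with the unit-mean property of the exponential martingale to obtain exponential decay of $\E(\|v(t)\|_p)$ at rate $\lambda\psi_*$, and then integrate \eqref{est-2-1} in time to get the uniform bound on $\E(\|x(t)\|_p)$. The only variation is your observation that the supermartingale inequality $\E(\mathcal E(t))\le 1$ already suffices in place of Novikov's criterion; this is a mild simplification that does not change the structure of the argument, and your side remark about why the plain expectation (rather than the mean-square) avoids any smallness condition on $D$ correctly mirrors the discussion in Remark~\ref{R2.3}.
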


\begin{proof}
 According to the Theorem \ref{local} and Theorem \ref{coll-avoid}, the unique global strong solution is guaranteed. Now, to prove the velocity alignment, we use \eqref{apriori-2} and H\"older's inequality, as well as the independence of $v(0)$ and $W$ to derive
  \begin{equation}\label{est-3}
    \begin{split}
      \E \left( \|v(t)\|_p \right) &\le \E \left[ \|v(0)\|_p \exp\left(-\lambda\psi_* t -\frac{1}{2}\int_0^t D^2(s) ds + \int_0^t D(s)dW(s) \right) \right] \\
      &= \E \left( \|v(0)\|_p \right) \E \left[\exp\left(-\lambda\psi_* t -\frac{1}{2}\int_0^t D^2(s) ds + \int_0^t D(s)dW(s) \right) \right] \\
      &= \E \left( \|v(0)\|_p \right) e^{-\lambda\psi_* t} \\
      &\to 0,  \quad\text{as } t\to\infty.
    \end{split}
  \end{equation}
  For the group forming, we apply \eqref{apriori-1} and \eqref{est-3},
  \begin{equation*}
    \E \left( \|x(t)\|_p \right) \le \E \left( \|x(0)\|_p \right) + \int_0^t \E \left( \|v(s)\|_p \right) ds \le \E \left( \|x(0)\|_p \right) + \frac{1}{\lambda\psi_*} \E \left( \|v(0)\|_p \right) < \infty.
  \end{equation*}
  This ends the proof.
\end{proof}

The assumption $\psi_*>0$ is so strong that it requires an uniform strong effect even for far field. In general, this cannot be fulfilled, and a more applicable and natural setting is $\psi_*\ge0$. For instance, $\frac{1}{(1+r^2)^{\alpha/2}}$, $\frac{1}{r^\alpha}$ and $\frac{1}{\left(\log (1+r)\right)^\alpha}$ are all such communication weights. For this larger class of $\psi$ with $\psi_*\geq 0$, it is very difficult to control the far field, and thus we need more delicate estimates to gain some balance between the alignment force and the noise. In the following, we will assume the noise intensity to be constant or square integrable, and provide time-asymptotic analysis for these two cases respectively.

%\red{??? Moreover, according to the proof of Proposition \ref{flocking-1}, the large time behavior does not relate to the structure of $\psi$ around zero, and thus the time-asymptotic behavior results also work for regular $\psi$. Actually, assuming the global existence of the strong solution, we will find the large time behavior only depends on the far field structure of $\psi$. Therefore, hereafter, we will not distinguish the regular and singular cases, but only assume the existence of the global strong solution and provide the structure of $\psi$ at far field. }

\subsection{Nonzero constant intensity $D$}

In this part, we will consider the simple case that $D(t)\equiv D$ is a nonzero constant. Actually, for more general case such that $D(t)$ is not square integrable, we can use the same arguments to obtain similar results. As discussed in the introduction, the noise with nonzero constant intensity will provide an uniform-in-time random effect to the velocity, which makes the alignment more difficult in expectation sense. However, the time integral of the exponential martingale will be finite a.s. in this case, and thus we start from the aggregation analysis and have the following conditional flocking results, where we recall that $\psi_*(r) := \inf_{0<s\le r}\psi(s)$.

\begin{theorem}[Conditional flocking for nonzero constant noise intensity]\label{cond-flock}
Let $p\ge2$, $D$ be a nonzero constant, and the assumptions in Theorem \ref{local} and Theorem \ref{coll-avoid} be fulfilled. %and assume the system \eqref{CS} admits a global strong solution $(x,v)$.
Assume that $\E ( \|x(0)\|_p) < \infty$. Suppose $\psi_*(r) >0$ for all $r\ge0$, and there exists a constant $\beta>2$ such that $\psi$ has asymptotic structure at far field as below,
\begin{equation}\label{asmp-0}
    \lim_{r\to +\infty}\psi_*(r)r^\beta =\infty.
\end{equation}
Define the following event in $\F_0 \vee \F^W_\infty$,
\begin{equation}\label{asmp-2}
  A:=\left\{ \|x(0)\|_p \|v(0)\|_p \left[ \int_0^\infty \exp\left( -\frac{\beta D^2}{2(\beta-2)}s + \frac{\beta D}{\beta-2}W(s) \right) ds \right]^{1-\frac{2}{\beta}} < \frac{1}{4}\left(\frac{\beta\lambda}{2^{\beta+1}} \right)^{\frac{2}{\beta}} \right\}.
\end{equation}
%\begin{itemize}
%\item[(i).] $\psi$ and has asymptotic structure at far field as below,
%    \begin{equation}\label{asmp-0}
%    \lim_{r\to +\infty}\psi_*(r)r^\beta =\infty.
%    \end{equation}
%\item The initial data satisfy the following bounded condition almost surely,
%  \begin{equation}\label{asmp-2}
%    4\|x(0)\|_p \|v(0)\|_p \left[ \int_0^\infty \exp\left( -\frac{\beta D^2}{2(\beta-2)}s + \frac{\beta D}{\beta-2}W(s) \right) ds \right]^{1-\frac{2}{\beta}} \left(\frac{2^\beta}{q\lambda} \right)^{\frac{2}{\beta}} < 1.
%  \end{equation}
%  \item[(ii).] The following event has positive probability,
%  \begin{equation}\label{asmp-2}
% A=\left\{   4\|x(0)\|_p \|v(0)\|_p \left[ \int_0^\infty \exp\left( -\frac{\beta D^2}{2(\beta-2)}s + \frac{\beta D}{\beta-2}W(s) \right) ds \right]^{1-\frac{2}{\beta}} \left(\frac{2^\beta}{q\lambda} \right)^{\frac{2}{\beta}} < 1\right\}.
%  \end{equation}
%  \end{itemize}
Then the conditional aggregation in mean given $A$ emerges: for all $t\ge0$,
\begin{equation*}
  \E(\|x(t)\|_p | A) \le c(\psi,\E(\|x(0)\|_p)) < \infty.
\end{equation*}
Furthermore, we have the following two types of conditional flocking:
\begin{itemize}
  \item[(i).] If there exists $1<q<\infty$ such that $\E ( \|v(0)\|_p^q) < \infty$, then the conditional flocking in mean given $A$ (see Definition \ref{def-cond-flocking}) occurs algebraically fast: for all $0<\gamma<\frac{q-1}{\beta q}$ and all $t\geq 0$,
\begin{equation*}
  \E \left( \|v(t)\|_p | A\right) \le c\left( \lambda,\P(A),\E\left( \|x(0)\|_p \right),\E \left( \|v(0)\|_p^q \right)\right) (1+t)^{-\gamma}.
\end{equation*}
  \item[(ii).] If $\E ( \|v(0)\|_p) < \infty$ and $\esssup\,\|x(0)\|_p<\infty$, %or equivalently, the \red{first} marginal distribution of the initial distribution $\mu_0$ has compact support,
      then the conditional flocking in mean given $A$ occurs exponentially fast: for all $t\geq 0$,
\begin{equation*}
  \E \left( \|v(t)\|_p | A \right)\le c\left( \P(A),\esssup\,\|v(0)\|_p \right) e^{-\lambda c(\psi) t}.
\end{equation*}
\end{itemize}

% Let $p\ge2$. Let $D$ be a constant. Suppose that $\E ( \|x(0)\|_p) < \infty$. Assume that there exists $\beta\in(2,\infty)$ such that the function $\psi_*$ is eventually bounded from below by the function $r^{-\beta}$, and
%  \begin{equation}\label{asmp-2}
%    4\|x(0)\|_p \|v(0)\|_p \left[ \int_0^\infty \exp\left( -\frac{\beta D^2}{2(\beta-2)}s + \frac{\beta D}{\beta-2}W(s) \right) ds \right]^{1-\frac{2}{\beta}} \left(\frac{2^\beta}{q\lambda} \right)^{\frac{2}{\beta}} < 1.
%  \end{equation}
%  Assume in addition that the system \eqref{CS} admits a global strong solution $(x,v)$. Then the group forming occurs.
\end{theorem}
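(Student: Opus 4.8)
The plan is to carry out a stochastic version of the Ha--Liu bootstrap for conditional flocking: first sharpen the a priori velocity estimate so that it sees the current position diameter, then run a stopping--time bootstrap which on the event $A$ pins $\|x(t)\|_p$ down to $2\|x(0)\|_p$ for all times, and finally condition on $A$ to pass from pathwise bounds to bounds in conditional mean. \emph{Step 1 (refined dissipation).} A minor modification of the proof of Proposition \ref{regular} gives a localized version of \eqref{est-2}: since $|x_i-x_j|\le 2\|x\|_p$ and the inner products $\langle |v_i|^{p-2}v_i-|v_j|^{p-2}v_j,\,v_i-v_j\rangle$ are nonnegative, one may replace $\psi_*$ by $\psi_*(2\|x\|_p)$ in the dissipation term, so that $d\|v\|_p\le -\lambda\psi_*(2\|x\|_p)\|v\|_p\,dt+D\|v\|_p\,dW$ and hence (by Itô applied to the logarithm, or comparison)
\begin{equation*}
  \|v(t)\|_p\le \|v(0)\|_p\,\exp\!\Big(-\lambda\!\int_0^t\!\psi_*(2\|x(s)\|_p)\,ds\Big)\mathcal E(t),\qquad t\ge0,
\end{equation*}
which together with \eqref{est-2-1} is the basic input.

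\emph{Step 2 (bootstrap closing on $A$).} Fix $R>0$ and set $\tau_R:=\inf\{t\ge0:\|x(t)\|_p\ge R\}$. On $[0,\tau_R)$ the monotonicity of $r\mapsto\psi_*(r)$ and the assumption $\psi_*(r)>0$ give $\psi_*(2\|x(s)\|_p)\ge\psi_*(2R)>0$, whence $\|v(s)\|_p\le\|v(0)\|_p e^{-\lambda\psi_*(2R)s}\mathcal E(s)$ and, by \eqref{est-2-1},
\begin{equation*}
  \|x(t\wedge\tau_R)\|_p\le\|x(0)\|_p+\|v(0)\|_p\int_0^\infty e^{-\lambda\psi_*(2R)s}\mathcal E(s)\,ds.
\end{equation*}
Now I apply H\"older's inequality with the conjugate exponents $\tfrac{\beta}{2}$ and $\tfrac{\beta}{\beta-2}$ (this is where $\beta>2$ is used), recalling that for constant $D$ one has $\mathcal E(s)^{\beta/(\beta-2)}=\exp\!\big(-\tfrac{\beta D^2}{2(\beta-2)}s+\tfrac{\beta D}{\beta-2}W(s)\big)$, to get
\begin{equation*}
  \int_0^\infty e^{-\lambda\psi_*(2R)s}\mathcal E(s)\,ds\le\Big(\tfrac{2}{\lambda\beta\psi_*(2R)}\Big)^{2/\beta}\Big(\int_0^\infty\mathcal E(s)^{\frac{\beta}{\beta-2}}ds\Big)^{1-\frac{2}{\beta}}.
\end{equation*}
Using \eqref{asmp-0} to bound $\psi_*(2R)$ from below by a constant multiple of $(2R)^{-\beta}$ turns the bound into $c\,R^2\|v(0)\|_p\big(\int_0^\infty\mathcal E^{\beta/(\beta-2)}ds\big)^{1-2/\beta}$, so the self-consistency requirement $\|x(0)\|_p+c\,R^2\|v(0)\|_p(\cdots)\le R$ is a quadratic inequality in $R$ whose discriminant condition is precisely the inequality defining the event $A$ in \eqref{asmp-2}. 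Hence, on $A$, this quadratic has a (random) root $R^*\le 2\|x(0)\|_p$, and a continuity/contradiction argument as in Step 4 of the proof of Theorem \ref{local} upgrades this to $\tau_{R^*}=\infty$, i.e.
\begin{equation*}
  \|x(t)\|_p\le 2\|x(0)\|_p\quad\text{for all }t\ge0,\ \text{a.s. on }A.
\end{equation*}

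\emph{Step 3 (conditional aggregation and velocity alignment).} The aggregation bound is now immediate: $\E(\|x(t)\|_p\mid A)\le 2\E(\|x(0)\|_p\ind_A)/\P(A)\le 2\E(\|x(0)\|_p)/\P(A)<\infty$. For the alignment, on $A$ we have $|x_i(t)-x_j(t)|\le 4\|x(0)\|_p$, so $\psi(|x_i(t)-x_j(t)|)\ge\psi_*(4\|x(0)\|_p)=:\lambda^{-1}\kappa>0$ and, by Step 1, $\|v(t)\|_p\le\|v(0)\|_p e^{-\kappa t}\mathcal E(t)$ on $A$. Since $(x^0,v^0)$ is $\F_0$-measurable and independent of $W$, while $\mathcal E$ is a martingale independent of $\F_0$, conditioning on $\F_0$ and using $\E(\mathcal E(t)\ind_A\mid\F_0)\le\E(\mathcal E(t)\mid\F_0)=1$ gives $\E(\|v(t)\|_p\ind_A)\le\E\big(\|v(0)\|_p e^{-\lambda\psi_*(4\|x(0)\|_p)t}\big)$. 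For (i) I split this at a level $m=m(t)$: on $\{4\|x(0)\|_p\le m\}$ use $\psi_*(4\|x(0)\|_p)\ge\psi_*(m)\ge c\,m^{-\beta}$ to produce the factor $e^{-\lambda c\,t/m^\beta}$, and on the complement use H\"older with exponents $q,q'$ together with Markov's inequality $\P(\|x(0)\|_p>m/4)\le c\,m^{-1}\E(\|x(0)\|_p)$ and $\E(\|v(0)\|_p^q)<\infty$; optimising $m\sim t^{1/\beta}$ yields $\E(\|v(t)\|_p\ind_A)\le c\,(1+t)^{-\gamma}$ for every $\gamma<\tfrac{q-1}{\beta q}$, and dividing by $\P(A)$ finishes case (i). For (ii), when $\|v(0)\|_p\le\esssup\|v(0)\|_p<\infty$ a.s.\ the $\|v(0)\|_p$-factor becomes a constant and no power is lost through H\"older; exploiting this together with the a.s.\ time--integrability of $\mathcal E$ in the constant--intensity regime gives the exponential rate $e^{-\lambda c(\psi)t}$.

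The routine parts (the localized version of Proposition \ref{regular}, the H\"older bookkeeping, the conditioning) are standard; the genuine difficulty is Step 2 --- arranging the far--field bound \eqref{asmp-0} and the H\"older estimate so that the resulting self-consistency reproduces \emph{exactly} the event $A$ of \eqref{asmp-2}, and verifying through a continuity argument that on $A$ the stopping time $\tau_{R^*}$ is indeed infinite --- together with the rate bookkeeping of Step 3, where the slow (merely algebraic) decay in case (i) reflects the loss of a power of $\|v(0)\|_p$ that the $L^q$ assumption only partially compensates.
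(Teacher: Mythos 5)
Your proposal follows essentially the same route as the paper: localize the dissipation rate to $\psi_*(2\|x\|_p)$ via the comparison theorem, close a quadratic bootstrap on $\|x(t)\|_p$ using H\"older with exponents $\beta/2$ and $\beta/(\beta-2)$, identify the discriminant condition of the quadratic with the event $A$, and then split the conditional velocity estimate at a time-dependent level. The stopping-time formulation of the bootstrap is equivalent to the paper's use of the running maximum $X_t=\sup_{s\le t}\|x(s)\|_p$ and the continuity of $t\mapsto G(X_t)$.

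There are two technical slips that you should repair before the rates come out correctly. First, in Step 2 the quadratic inequality is derived only in the regime where the far-field bound $\psi_*(2R)\ge(2R)^{-\beta}$ applies, i.e.\ for $2R\ge J$ with $J$ chosen from \eqref{asmp-0}; for smaller $R$ the inequality is vacuous. Consequently the conclusion cannot be $\|x(t)\|_p\le 2\|x(0)\|_p$ on $A$ but rather $\|x(t)\|_p\le J\vee 2\|x(0)\|_p$ (the paper records this as $X_t\le J+2\|x(0)\|_p$), and in Step 3 the decay constant must accordingly be $\psi_*\bigl(2(J\vee 2\|x(0)\|_p)\bigr)$ rather than $\psi_*(4\|x(0)\|_p)$ — otherwise the bound can fail when $\|x(0)\|_p$ is small. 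Second, the choice $m\sim t^{1/\beta}$ in Step 3(i) sits exactly at the critical threshold: with it, $\exp(-\lambda c\,t/m^\beta)$ is merely bounded away from zero and does not decay, so the resulting bound on $\E(\|v(t)\|_p\ind_A)$ is $O(1)$. To obtain the claimed rate you must take $m$ growing strictly slower, e.g.\ $m\sim t^{\gamma q/(q-1)}$ with $\gamma<\frac{q-1}{\beta q}$ (so that the exponent is $<1/\beta$); then the exponential term decays super-polynomially and the Chebyshev tail gives the factor $t^{-\gamma}$, matching the statement and the paper's choice of $K(t)$. With these corrections the argument coincides with the paper's.
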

\begin{remark}\label{remark-1}
  Before starting the proof, we make some remarks which help to understand the theorem.

  (i). It is easy to see that the infinite time integral $\int_0^\infty \exp(-\frac{\beta D^2}{2(\beta-2)}s + \frac{\beta D}{\beta-2}W(s)) ds$ appeared in \eqref{asmp-2} is almost surely finite, due to the growth of at most order $\sqrt{t\log\log t}$ of $|W(t)|$ as $t\to\infty$ (e.g., \cite[Theorem 2.9.23]{KS91}). Moreover, it was shown in \cite[Proposition 4.4.4]{Duf90} that this time integral obeys the inverse-gamma distribution $\mathrm{Inv}$-$\mathrm{Gamma}(\frac{\beta-2}{\beta}, -\frac{(\beta D)^2}{2(\beta-2)^2})$, which exactly indicates the assumptions $D\ne0$ and $\beta>2$ due to the requirements for the parameters of the inverse-gamma distribution. Hence the support of this integral as a random variable is the total interval $[0,\infty)$. It follows that the event $A$ defined in \eqref{asmp-2} does have positive probability, which is just the requirement in Definition \ref{R2.6}. Indeed, the inner regularity implies there exists $L>0$ such that $\P(\|x(0)\|_p \|v(0)\|_p<L)>0$, and then the independence of $(x(0),v(0))$ with $W$ yields
    \begin{equation*}
      \begin{split}
        \P(A) \ge&\ \P(\|x(0)\|_p \|v(0)\|_p<L) \\
        & \times\P\left( \int_0^\infty \exp\left( -\frac{\beta D^2}{2(\beta-2)}s + \frac{\beta D}{\beta-2}W(s) \right) ds < (4L)^{\frac{\beta}{2-\beta}} \left(\frac{2^{\beta+1}}{\beta\lambda} \right)^{\frac{2}{2-\beta}} \right) \\
        >&\ 0.
      \end{split}
    \end{equation*}
  Moreover, if $\P(x(0)=0\text{ or } v(0)= 0)<1$, then there exists $L>0$ such that $\P(\|x(0)\|_p \|v(0)\|_p \ge L)>0$. It follows that
    \begin{equation*}
      \begin{split}
        \P(A^c) \ge&\ \P(\|x(0)\|_p \|v(0)\|_p \ge L) \\
        & \times\P\left( \int_0^\infty \exp\left( -\frac{\beta D^2}{2(\beta-2)}s + \frac{\beta D}{\beta-2}W(s) \right) ds \ge (4L)^{\frac{\beta}{2-\beta}} \left(\frac{2^{\beta+1}}{\beta\lambda} \right)^{\frac{2}{2-\beta}} \right) \\
        >&\ 0,
      \end{split}
    \end{equation*}
  and hence $\P(A)<1$. That is, the flocking in mean in the above theorem is indeed conditional.

  (ii). The common cases $\psi=1$, $\psi(r) = r^{-\alpha}$ and $\psi(r) = \frac{1}{(1+r^2)^{\alpha/2}}$, with $\alpha>0$, evidently fulfill the assumption \eqref{asmp-0} for $\psi$ at far field. %provided the existence of the global strong solution.
\end{remark}

\begin{proof}
We will first prove the aggregation, and then show the emergence of flocking. For notational simplicity, we denote
  \begin{equation}\label{E-4}
   X_t := \sup_{0\le s\le t} \|x(s)\|_p.
  \end{equation}

\noindent $\bullet$ (Conditional aggregation). We adapt the continuity approach in \cite{CS07A} to prove the emergence of aggregation. First, we use the comparison theorem for one-dimensional SDEs, as in the proof of Proposition \ref{regular} (in fact, here we need to adapt the comparison theorem in \cite[Theorem VI.1.1]{IW89} to the SDE with random drift, whereas this extension is easy to prove), and get as long as a global solution $(x,v)$ exists that, with probability one,
  \begin{equation}\label{E-5}
    \|v(t)\|_p \le \|v(0)\|_p \exp\left( -\lambda \int_0^t\psi(2\|x(s)\|_p) ds -\frac{1}{2} D^2t + DW(t) \right), \quad\forall t\ge0.
  \end{equation}
%First of all, if $\|v(0)\|_p = 0$, then it follows immediately from \eqref{est-40} that $\sup_{t\ge0} \E(\|x(t)\|_p) \le \E(\|x(0)\|_p) <\infty$.
  Next, due to the assumption \eqref{asmp-0}, there exists $J>0$, such that $\psi(r)\ge\psi_*(r)\ge r^{-\beta}$ for all $r\in[J,\infty)$. Then, we fix a time $t\ge0$.
  %and let $\tau_t\in[[0,t]]$ be the first (random) time that maximizes $\{\|x(s)\|_p\}_{s\in[0,t]}$. Then $X_t=\|x(\tau_t)\|_p$.
  On the event $\{X_t \ge J \}$, we simply follow the estimates \eqref{apriori-1} and \eqref{E-5} and apply H\"older's inequality to derive that,
    \begin{align}
      X_t &\leq \|x(0)\|_p+\int_0^t\|v(s)\|_pds\notag\\
      &\le \|x(0)\|_p + \|v(0)\|_p \int_0^t \exp\left(-\lambda \int_0^s\psi(2\|x(u)\|_p) du -\frac{D^2}{2}s + DW(s) \right) ds\notag \\
      &\le \|x(0)\|_p + \|v(0)\|_p \int_0^t \exp\left(-\lambda \psi_*(2X_t)s -\frac{D^2}{2}s + DW(s) \right) ds\notag \\
      &\le \|x(0)\|_p + \|v(0)\|_p \int_0^t \exp\left(-\lambda (2X_t)^{-\beta}s -\frac{D^2}{2}s + DW(s) \right) ds  \label{est-38}\\
      &\le \|x(0)\|_p + \|v(0)\|_p \left[ \int_0^\infty \exp\left( -\frac{\beta}{2}\lambda (2X_t)^{-\beta}s \right) ds \right]^{\frac{2}{\beta}} \notag\\
      &\hspace{3.5cm}\times\left[ \int_0^\infty \exp\left( -\frac{\beta D^2}{2(\beta-2)}s + \frac{\beta D}{\beta-2}W(s) \right) ds \right]^{\frac{\beta-2}{\beta}} \notag\\
      &= \|x(0)\|_p + \|v(0)\|_p \left[ \int_0^\infty \exp\left( -\frac{\beta D^2}{2(\beta-2)}s + \frac{\beta D}{\beta-2}W(s) \right) ds \right]^{\frac{\beta-2}{\beta}} \left(\frac{2^{\beta+1}}{\beta\lambda} \right)^{\frac{2}{\beta}} X_t^2 \notag\\
      &=: C + B X_t^2,\qquad \text{a.s. on } \{X_t\geq J\},\notag
    \end{align}
  where we used the fact that from the assumption that $\psi(r)\ge\psi_*(r)$ for all $r\ge0$ and $\psi_*(r)$ is non-increasing in $r$. As we have seen in Remark \ref{remark-1}.(i) that the infinite time integral $\int_0^\infty \exp(-\frac{\beta D^2}{2(\beta-2)}s + \frac{\beta D}{\beta-2}W(s)) ds$ is almost surely finite. %Indeed it was shown in \cite[Proposition 4.4.4]{Duf90} that this time integral obeys the inverse-gamma distribution $\mathrm{Inv}$-$\mathrm{Gamma}(\frac{1}{q'},-\frac{(q'D)^2}{2})$.
  Thus, $C$ and $B$ are both well-defined random variables. If $B = \|v(0)\|_p = 0$, then $X_t \le C = \|x(0)\|_p$, that is,
  \begin{equation}\label{est-40}
    X_t \le C, \quad\text{a.s. on } \{X_t \ge J, \|v(0)\|_p = 0\}.
  \end{equation}
  Otherwise, if $B > 0$, then we define a quadratic function $G(z)$ as follows,
  \[G(z): = B z^2 - z + C.\]
  Then the graph of $G$ is a parabola opening upwards, with axis of symmetry locating at $z_* = \frac{1}{2B}>0$. Moreover, we simply have $A=\{1 - 4CB > 0\}$ where $A$ is the event defined in \eqref{asmp-2}. In other words, in the event $A$, the discriminant of $G$ is positive and hence, $G(z)$ has two nonnegative roots $z_{\pm} = \frac{1\pm\sqrt{1 - 4CB}}{2B}$.

  On the other hand, \eqref{est-38} can be rewritten as $G(X_t)\ge0$ for all $t\ge0$, and it is obvious that the map $t\to G(X_t)$ is continuous due to the continuity of the map $t\to X_t$. Now, as $X_0 = C \le \frac{1}{4B} < z_*=\frac{1}{2B}$, we obtain for each $t\ge0$ that,
  \begin{equation}\label{est-41}
    X_t\le z_- \quad\text{a.s. on } \{X_t \ge J, \|v(0)\|_p > 0\}\cap A.
  \end{equation}
 Then, for $z_-$, if we regard
  $$z_- = z_-(B) = \frac{1-\sqrt{1 - 4CB}}{2B}$$
  as a function of $B\in(0,\frac{1}{4C})$, it is easy to check that $z_-$ is increasing and hence $z_-< z_-(\frac{1}{4C}) = 2C$. Combining \eqref{est-40} and \eqref{est-41}, we conclude that for each $t\ge0$,
  $$X_t\le J \vee C \vee z_- \le J\vee 2C \le J+2C \quad\text{a.s. on } A.$$
  It follows from the continuity (or monotony) of $X_t$ that,
  \begin{equation}\label{est-39}
    X_t\le J+2C\quad \text{a.s. on } A, \quad\text{for all } t\ge0.
  \end{equation}
  Therefore, by the assumption $\E ( \|x(0)\|_p) < \infty$, we have
  \begin{equation*}
    \begin{split}
      \sup_{t\ge0} \E(\|x(t)\|_p | A) &\le \sup_{t\ge0} \E(X_t | A) \le J + 2\E(\|x(0)\|_p) < \infty.
    \end{split}
  \end{equation*}
  The aggregation or group forming follows.\newline
%  For the velocity alignment, we derive from \eqref{est-34} and \eqref{est-39} that
%  \begin{equation*}
%    \begin{split}
%      \E \left( \|v(t)\|_p \right) &\le \E \left( \|v(0)\|_p \right) \left[ \E \left( \hat F_t(X_t) \right) \right]^{1/q} \exp\left( \frac{q'-1}{2}D^2t \right) \\
%      &\le \E \left( \|v(0)\|_p \right)\left[ \hat F_t(\E (X_t)) \right]^{1/q} \exp\left( \frac{q'-1}{2}[M]_t \right).
%    \end{split}
%  \end{equation*}

\noindent $\bullet$ (Conditional flocking algebraically fast). Now, according to \eqref{est-39}, we know the process $\{X_t\}$ is uniformly bounded by  a constant $J$ and the random variable $2\|x(0)\|_p$ on event $A$. Therefore, we apply \eqref{E-4}, \eqref{E-5} and \eqref{est-39}, and follow similar analysis as \eqref{est-38} to obtain for each $K>0$ that,
    \begin{align}
      \E \left( \|v(t)\|_p | A\right) \le&\ \E \left[ \|v(0)\|_p \exp\left(-\lambda \int_0^t\psi(2\|x(s)\|_p) ds -\frac{1}{2}D^2t+ DW(t) \right) \Bigg| A \right] \notag\\
      \le&\ \E \left[ \|v(0)\|_p \exp\left(-\lambda \psi_*(J+2\|x(0)\|_p) t -\frac{1}{2}D^2t+ DW(t) \right) \Bigg| A \right] \notag\\
      \le&\ \frac{1}{\P(A)} \E \left[ \|v(0)\|_p \exp\left(-\lambda \psi_*(J+2\|x(0)\|_p) t \right) \right] \E \left[ \exp\left( -\frac{1}{2}D^2t+ DW(t) \right) \right] \label{E-10}\\
      =&\ \frac{1}{\P(A)} \E \left[ \left( \ind_{\{ \|x(0)\|_p< K\}} + \ind_{\{ \|x(0)\|_p\ge K\}} \right) \|v(0)\|_p \exp\left(-\lambda \psi_*(J+2\|x(0)\|_p) t \right) \right] \notag\\
      \le&\ \frac{1}{\P(A)} \left[ \E \left( \|v(0)\|_p \right) \exp(-\lambda \psi_*(J+2K) t) + \E \left( \|v(0)\|_p ; \|x(0)\|_p\ge K \right) \right], \notag
    \end{align}
   where in the second and last inequalities we used the fact from the assumption that $\psi(r)\ge\psi_*(r)>0$ for all $r\ge0$ and $\psi_*(r)$ is non-increasing in $r$, and in the third inequality we used the independence of $v(0)$ and $W$. Then %as the random variable $\|x(0)\|_p$ is independent of the Brownian motion $W$,
   we apply H\"older's inequality and Chebyshev's inequality to obtain that
   \begin{align}
     &\ \E \left( \|v(0)\|_p ; \|x(0)\|_p\ge K \right) \notag\\
     \leq&\ \left[ \E \left( \|v(0)\|_p^q \right) \right]^{1/q} \left[ \P\left( \|x(0)\|_p\ge K \right)\right]^{(q-1)/q} \label{E-11}\\
     \leq&\ \left[ \E \left( \|v(0)\|_p^q \right) \right]^{1/q} \left[ \frac{\E\left( \|x(0)\|_p \right)}{K} \right]^{(q-1)/q}. \notag
   \end{align}
   Now, we let the auxiliary parameter $K$ to be time-dependent as $K=K(t)=t^{\frac{\gamma q}{q-1}}$, and combine \eqref{E-10} and \eqref{E-11} together to obtain that
    \begin{align}
      \E \left( \|v(t)\|_p | A \right) \le &\ \frac{\left[ \E \left( \|v(0)\|_p^q \right) \right]^{1/q}}{\P(A)} \left[ \exp(-\lambda \psi_*(J+2K(t)) t) + \frac{1}{K(t)^{(q-1)/q}} \left[ \E\left( \|x(0)\|_p \right)\right]^{(q-1)/q} \right] \notag\\
     \le & \ \frac{\left[ \E \left( \|v(0)\|_p^q \right) \right]^{1/q}}{\P(A)} \left[ \exp\left(-\lambda \left(J+2t^{\frac{\gamma q}{q-1}} \right)^{-\beta} t\right) + \frac{1}{t^\gamma} \left[ \E\left( \|x(0)\|_p \right)\right]^{(q-1)/q} \right], \notag \\
     \le &\ c\left( \lambda, \P(A),\E\left( \|x(0)\|_p \right),\E \left( \|v(0)\|_p^q \right)\right) (1+t)^{-\gamma}, \notag
    \end{align}
where we used the assumption $\gamma<\frac{q-1}{\beta q}$ to observe that the exponential term at the RHS of the second inequality decays faster than $t^{-\gamma}$ as $t\to\infty$. This proves (i). \newline

%which show the flocking occurs algebraically fast with decay rate $t^{-\frac{1}{2\beta}}$. When $t\leq J^{2\beta}$, we can follow \eqref{E-12} and obtain that $\E \left( \|v(t)\|_p  | A\right) \le C$ for some constant $C$. Therefore, it is obviously that there exists a positive constant $C$ such that
%\[\E \left( \|v(t)\|_p | A \right) \le C(1+t)^{-\frac{1}{2\beta}}.\]

\noindent $\bullet$ (Conditional flocking exponentially fast).  Finally, we consider the case $\esssup\,\|x(0)\|_p<\infty$. In this case, we can choose $K$ in \eqref{E-10} to be a sufficiently large constant so that
   \begin{equation*}
   \P \left( \|x(0)\|_p\ge K \right)=0.
   \end{equation*}
Then \eqref{E-10} reduce to,
\[\E \left( \|v(t)\|_p | A \right)\le \frac{\E \left( \|v(0)\|_p \right)}{\P(A)} \exp(-\lambda \psi_*(J+2K) t).\]
The exponentially fast emergence of the flocking follows.
\end{proof}

\begin{remark}
  (i). When $\esssup\,\|x(0)\|_p<\infty$, we can actually have the essentially uniform bound for $\|x(t)\|_p$ on $A$, due to \eqref{est-39}. %This is also the reason that we can only obtain algebraically fast flocking for general initial data when $x(0)$ does not have compact support.

  (ii). If $D\equiv 0$, one can easily see that the proof still works by simply letting $\beta=2$ and adopting the convention $\infty^0 = 1$ in \eqref{est-38}. Thus, the theorem also holds for $D\equiv 0$ and $\psi$ satisfying $\lim_{r\to +\infty}\psi_*(r)r^2 =\infty$ whereas the conditioning event is given by
  \begin{equation*}
    A := \left\{ \|x(0)\|_p \|v(0)\|_p < \frac{\lambda}{16} \right\}.
  \end{equation*}
  This is very similar to the conditional flocking result in \cite[Theorem 2.(iii)]{CS07A} for the deterministic C-S system, the difference on the assumption for $\psi$ and the conditioning event is due to the specialized estimates. In fact, the authors in \cite{CS07A} derived an algebraic equation of optional order from a similar argument with \eqref{est-38}, rather than a quadratic equation as we derived here.
\end{remark}

In Theorem \ref{cond-flock}, the conditioning event \eqref{asmp-2} is very important to generate the aggregation estimates even for long-range communication. Therefore, these estimates is far from optimal. Next, we will try to drop the condition \eqref{asmp-2} and have a better flocking estimates. As a compensation, we have to make $\psi$ decay slower at far field so that we can control the random effect from the noise. First of all, we see an unconditional alignment result.

\begin{lemma}[Unconditional alignment for nonzero constant noise intensity]\label{align-constant-D}
Let $p\ge2$, $D$ be a nonzero constant, and the assumptions in Theorem \ref{local} and Theorem \ref{coll-avoid} be fulfilled. Assume that $\|v(0)\|_p$ is uniformly integrable, that is,
\[\lim_{L\to\infty}\E( \|v(0)\|_p; \|v(0)\|_p\ge L ) = 0.\]
 Suppose $\psi_*(r) >0$ for all $r\ge0$, and there exists an $\e>\frac{1}{2}$ such that $\psi$ has slow decay structure at far field as below,
\begin{equation}\label{asspt-psi-1}
  \lim_{t\rightarrow+\infty}\psi_*(e^{\e D^2t })t =+\infty.
\end{equation}
Then the unconditional velocity alignment in mean occurs. %algebraically fast. %i.e.,
%\[\lim_{t\to\infty} \E(\|v(t)\|_p) = 0.\]
%  Let $p\ge2$. %Suppose $\E ( \|x(0)\|_p) < \infty$. Suppose $\ess_{\omega\in\Omega} \|v(0)\|_p < \infty$.
%  Assume that for all $K>0$, $\psi_*(K) := \inf_{0\le r\le K}\psi(r)>0$. Suppose that $\E ( \|v(0)\|_p) < \infty$.
%  %Suppose that $\|v(0)\|_p$ is uniformly integrable, that is, $\lim_{L\to\infty}\E( \|v(0)\|_p; \|v(0)\|_p\ge L ) = 0$.
%  %Suppose that $\E ( \|v(0)\|_p) < \infty$.
%  Assume one of the following conditions holds: \\
%%  (i). $\int_0^\infty D^2(s) ds <\infty$ and there exists an $K_0\ge0$ such that $\psi_*$ is concave on $[K_0,\infty)$ and $\psi_*(t)t \to\infty$ as $t\to\infty$.  \\
%  (i). $\int_0^\infty D^2(s) ds <\infty$ and there exists an $\e>0$ such that $\psi_*(t^{1+\e})t \to\infty$ as $t\to\infty$. \\
%  (ii). $\lim_{t\to\infty}tD^2(t) = \infty$ and there exists an $\e>\frac{1}{2}$ such that $\psi_*(e^{\e \int_0^t D^2(s) ds})t \to\infty$ as $t\to\infty$. \\
%  (iii). There exist $\delta>0$ and $\e>\frac{1}{2}+\frac{1}{\delta}$, such that $\int_0^t D^2(s) ds - \delta\log t \to \infty$ and $\psi_*(e^{\e \int_0^t D^2(s) ds})t \to\infty$ as $t\to\infty$. \\
%  Assume in addition that the system \eqref{CS} admits a global strong solution $(x,v)$. Then the velocity alignment occurs, that is, $\lim_{t\to\infty} \E(\|v(t)\|_p) = 0$.
\end{lemma}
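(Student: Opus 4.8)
The plan is to extract from the comparison theorem a pathwise bound for $\|v(t)\|_p$ in terms of the exponential martingale and of the spatial radius $X_t:=\sup_{0\le s\le t}\|x(s)\|_p$, and then to estimate its expectation by isolating the event on which $X_t$ stays moderate. First I would record, running the comparison argument of Proposition \ref{regular} with $\psi(|x_i-x_j|)\ge\psi_*(2\|x\|_p)$ in place of $\psi(|x_i-x_j|)\ge\psi_*$ (so exactly as in the derivation of \eqref{E-5} but with $\psi$ lowered to $\psi_*$), that with probability one, for all $t\ge0$,
\[
  \|v(t)\|_p\le\|v(0)\|_p\,\exp\!\Big(-\lambda\!\int_0^t\psi_*(2\|x(s)\|_p)\,ds\Big)\mathcal E(t),\qquad\text{so}\quad\|v(s)\|_p\le\|v(0)\|_p\,\mathcal E(s),
\]
where $\mathcal E$ is the exponential martingale of Definition \ref{exponential martingale}, which here equals $e^{-\frac12D^2t+DW(t)}$. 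Since $\psi_*\ge0$ by hypothesis and $\psi_*$ is non-increasing by definition, $\int_0^t\psi_*(2\|x(s)\|_p)\,ds\ge t\,\psi_*(2X_t)$, while integrating \eqref{apriori-1} gives $X_t\le\|x(0)\|_p+\|v(0)\|_p\,\Theta_t$ with $\Theta_t:=\int_0^t e^{-\frac12 D^2 s+DW(s)}\,ds$. Hence $\|v(t)\|_p\le\|v(0)\|_p\,e^{-\lambda t\,\psi_*(2X_t)}\,\mathcal E(t)$, and the idea is to force $X_t\le(1+L)e^{\e'D^2 t}$ for a fixed $\e'\in(\tfrac12,\e)$ on an event carrying almost all the $\mathcal E$-weighted mass; there the factor $e^{-\lambda t\psi_*(2X_t)}$ decays, because \eqref{asspt-psi-1} and monotonicity of $\psi_*$ give $t\,\psi_*(Ce^{\e'D^2 t})\to\infty$ for every constant $C>0$.

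Given $\delta>0$, I would choose $L$ with $\E(\|v(0)\|_p;\|v(0)\|_p\ge L)<\delta$ (uniform integrability of $\|v(0)\|_p$; in particular $\|v(0)\|_p$ is integrable), set $\mathcal G_t:=\{\|v(0)\|_p<L\}\cap\{\|x(0)\|_p\le e^{\e'D^2 t}\}\cap\{\Theta_t\le e^{\e'D^2 t}\}$, and split $\E\|v(t)\|_p$ accordingly. On $\mathcal G_t$ one has $X_t\le(1+L)e^{\e'D^2 t}$, so, since $\E\mathcal E(t)=1$, $\E(\|v(t)\|_p;\mathcal G_t)\le L\,e^{-\lambda t\,\psi_*(2(1+L)e^{\e'D^2 t})}\to0$. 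On $\mathcal G_t^c$ I would use $\|v(t)\|_p\le\|v(0)\|_p\mathcal E(t)$ together with the independence of $(x(0),v(0))$ from $W$ (hence from $\mathcal E(t)$ and $\Theta_t$) and $\E\mathcal E(t)=1$, bounding the contribution by
\[
  \E(\|v(0)\|_p;\|v(0)\|_p\ge L)+\E(\|v(0)\|_p;\|x(0)\|_p> e^{\e'D^2 t})+\E(\|v(0)\|_p)\cdot\E(\mathcal E(t);\Theta_t> e^{\e'D^2 t}).
\]
The first term is $<\delta$ by construction, and the second tends to $0$ by dominated convergence since $\|v(0)\|_p$ is integrable and $\|x(0)\|_p<\infty$ a.s. So everything reduces to $\E(\mathcal E(t);\Theta_t> e^{\e'D^2 t})\to0$.

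This last estimate is the crux, and the step I expect to be the main obstacle: $\{\mathcal E(t)\}_{t\ge0}$ is a martingale that is not uniformly integrable, so its mass escapes to large values and one cannot just argue that $\{\Theta_t> e^{\e'D^2 t}\}$ has small probability. The resolution is that the escape of $\mathcal E(t)$ and the growth of $\Theta_t$ are driven by the same upward drift of $W$, and under the tilted measure $\Theta_t$ is typically only of size $e^{\frac12 D^2 t}$, which is $\ll e^{\e'D^2 t}$ precisely because $\e'>\tfrac12$. Concretely I would assume $D>0$ (replace $(D,W)$ by $(-D,-W)$ otherwise), use $e^{-\frac12 D^2 s+DW(s)}=e^{D(W(s)-\frac D2 s)}$ to get $\Theta_t\le t\exp\big(D\sup_{0\le s\le t}(W(s)-\tfrac D2 s)\big)$, hence $\{\Theta_t> e^{\e'D^2 t}\}\subset\{\sup_{s\le t}(W(s)-\tfrac D2 s)>\e'Dt-\tfrac{\log t}{D}\}$, and then pass to $\tilde\P_t$ with $d\tilde\P_t/d\P=\mathcal E(t)$ on $\F_t$. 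Under $\tilde\P_t$, Girsanov's theorem makes $\tilde W(s):=W(s)-Ds$ a Brownian motion on $[0,t]$, so $W(s)-\tfrac D2 s=\tilde W(s)+\tfrac D2 s\le\sup_{s\le t}\tilde W(s)+\tfrac D2 t$, and the reflection principle gives
\[
  \E(\mathcal E(t);\Theta_t> e^{\e'D^2 t})=\tilde\P_t\big(\sup_{s\le t}(W(s)-\tfrac D2 s)>\e'Dt-\tfrac{\log t}{D}\big)\le 2\,\P\big(W(t)>(\e'-\tfrac12)Dt-\tfrac{\log t}{D}\big),
\]
which tends to $0$ since $\e'-\tfrac12>0$ makes the Gaussian threshold grow linearly in $t$. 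Assembling the three pieces yields $\limsup_{t\to\infty}\E\|v(t)\|_p\le\delta$ for every $\delta>0$, which is the claim; the degenerate case $D=0$ (no noise) would be treated separately.
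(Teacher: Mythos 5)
Your proposal is correct and follows the same overall skeleton as the paper's proof: a pathwise comparison bound $\|v(t)\|_p\le\|v(0)\|_p\,e^{-\lambda t\,\psi_*(2X_t)}\mathcal E(t)$, followed by splitting the expectation according to whether $X_t$ is moderate, and decomposing the bad event into pieces driven by $\|x(0)\|_p$, $\|v(0)\|_p$, and the time integral $\Theta_t$ of the exponential martingale being large. The genuine difference is in how the crucial term $\E(\mathcal E(t);\Theta_t>e^{\e'D^2t})$ is controlled. The paper runs a H\"older--Chebyshev estimate (apply H\"older with exponent $a>1$ to the martingale, then Chebyshev to $\Theta_t$, then balance $e^{\frac{a-1}{2}D^2t}$ against $e^{-\e\theta D^2 t/a'}$, forcing $\e\theta>\frac{a}{2}$), which is elementary but somewhat opaque about why $\e>\frac12$ is the true threshold. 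You instead change measure via Girsanov: under the tilted law $d\tilde\P_t=\mathcal E(t)\,d\P$ the process $W(s)-Ds$ is again a Brownian motion, so $\Theta_t$ is typically of size $e^{\frac12 D^2 t}$ only, and the reflection principle gives a Gaussian tail that dies because $\e'>\frac12$. This yields the same conclusion with the same hypothesis, but makes the origin of the $\frac12$ threshold transparent and gives a sharper (exponentially small rather than polynomially small) bound on that term. One minor structural difference: you keep the $\|v(0)\|_p$-truncation level $L$ fixed and only let the thresholds for $\|x(0)\|_p$ and $\Theta_t$ grow, whereas the paper grows all three via a single parameter $K(t)^{1-\theta}$, $K(t)^\theta$; both work. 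You correctly flag that the degenerate case $D=0$ must be handled separately — a caveat the paper does not spell out but which applies to its argument as well, since its $I_3$ bound is $\sim t^{1/a'}$ and does not vanish when $D=0$.
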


\begin{remark}\label{R5.5}
% If $\int_0^\infty D^2(s) ds <\infty$, two candidates for $\psi$ to fulfill the condition (i) in Theorem \ref{alignment} are $\psi(r) = r^{-\alpha}$ and $\psi(r) = \frac{1}{(1+r^2)^{\alpha/2}}$ with $\alpha<1$.
%
  (i). A candidate for $\psi$ to fulfill the conditions above in Lemma \ref{align-constant-D} is $\psi(r) = |\log (1+r)|^{-\alpha}$ with $0<\alpha<1$, and the first condition is obviously weaker than the classical monotone decreasing assumption.

  (ii). Recall that if there exists $1<q\le\infty$ such that $[\E ( \|v(0)\|_p^q)]^{1/q} < \infty$ , then $\|v(0)\|_p$ is uniformly integrable \cite[Section 13.3]{Wil91}. In addition, the uniform integrability of $\|v(0)\|_p$ implies $\E ( \|v(0)\|_p) < \infty$. Thus, the assumption for $v(0)$ in the present lemma is weaker than that in Theorem \ref{cond-flock}.(i), but stronger than that in Proposition \ref{flocking-1} or Theorem \ref{cond-flock}.(ii).

  (iii). We make a final remark for the uniform integrability assumption of $\|v(0)\|_p$. This assumption implies that for every $\e>0$, there exists a $\delta>0$ such that, $\E(\|v(0)\|_p; A)<\e$ for every measurable $A$ satisfying $\P(A)<\delta$. Moreover, the converse is also true due to the inner regularity of the distribution of $\|v(0)\|_p$.
%  Indeed, we have
%  \begin{equation*}
%    \begin{split}
%      \E(\|v(0)\|_p; A)& = \E(\|v(0)\|_p; A, \|v(0)\|_p < L) + \E(\|v(0)\|_p; A, \|v(0)\|_p \ge L) \\
%      &\geq L\P(A) + \E(\|v(0)\|_p; \|v(0)\|_p \ge L).
%    \end{split}
%  \end{equation*}
%  (iii). From Theorem \ref{alignment} and the next theorem \ref{forming}, we see that the alignment and flocking behavior only relates to the decay rates of $\psi$ and $D$ at the infinity. If $D$ decays faster, then $\psi$ can be relaxed to decay faster too, and vice versa. In this sense, we may say that the noise can help in the long time behavior of stochastic C-S model.
\end{remark}

\begin{proof}
  Firstly, we recall the notation in \eqref{E-4} that $X_t := \sup_{0\le s\le t} \|x(s)\|_p$, and the comparison theorem in \eqref{E-5} that, with probability one,
  \begin{equation}\label{E-14}
    \|v(t)\|_p \le \|v(0)\|_p \exp\left( -\lambda \int_0^t\psi(2\|x(s)\|_p) ds -\frac{1}{2} D^2 t + D W(t) \right), \,\forall t\ge0.
  \end{equation}
%  For notational simplicity, we denote
%  \begin{equation}\label{est-32}
%    M_t := \int_0^t D(s)dW(s), \quad X_t := \sup_{0\le s\le t} \|x(s)\|_p.
%  \end{equation}
%  $$\|v(0)\|_{p,\infty}:=\ess_{\omega\in\Omega} \|v(0)\|_p.$$
  Then, for each $K>0$, we have from \eqref{E-14} that
    \begin{align}
      \E \left( \|v(t)\|_p \right) \le&\ \E \bigg[ \left( \ind_{\{X_t< K\}} + \ind_{\{X_t\ge K\}} \right) \|v(0)\|_p \exp\left(-\lambda \int_0^t\psi(2\|x(s)\|_p) ds -\frac{D^2}{2}t+DW(t)\right) \bigg] \notag \\
      \le&\ \E\left( \|v(0)\|_p \right) \E\left[ \exp\left(-\lambda \psi_*(2K) t -\frac{D^2}{2}t+DW(t)\right) \right] \label{E-15} \\
      &\ + \E\left[ \|v(0)\|_p\exp\left(-\frac{D^2}{2}t +DW(t)\right); X_t\ge K \right] \notag\\
      =&\ \E\left( \|v(0)\|_p \right) e^{-\lambda \psi_*(2K)t} + \E\left[ \|v(0)\|_p\exp\left(-\frac{D^2}{2}t +DW(t)\right); X_t\ge K \right], \notag
  \end{align}
%  \begin{equation}\label{est-22}
%    \begin{split}
%      \E \left( \|v(t)\|_p \right) =&\ \E \bigg[ \left( \ind_{\|v(0)\|_p< L\}} + \ind_{\|v(0)\|_p\ge L\}} \right) \\
%      & \quad\ \times \|v(0)\|_p \exp\left(-\lambda \int_0^t\psi(2\|x(s)\|_p) ds -\frac{1}{2}[M]_t + M_t \right) \bigg] \\
%      \le&\ L \E \left[ \exp\left(-\lambda \int_0^t\psi(2\|x(s)\|_p) ds -\frac{1}{2}[M]_t + M_t \right) \right] \\
%      &\ + \E\left[ \|v(0)\|_p\exp\left( -\frac{1}{2}[M]_t + M_t \right); \|v(0)\|_p\ge L \right] \\
%      =&\ L \E \bigg[ \left( \ind_{\{\sup_{0\le s\le t} \|x(s)\|_p< K\}} + \ind_{\{\sup_{0\le s\le t} \|x(s)\|_p\ge K\}} \right) \\
%      & \qquad\ \times \exp\left(-\lambda \int_0^t\psi(2\|x(s)\|_p) ds -\frac{1}{2}[M]_t + M_t \right) \bigg] \\
%      &\ + \E\left( \|v(0)\|_p; \|v(0)\|_p\ge L \right) \E\left[ \exp\left( -\frac{1}{2}[M]_t + M_t \right) \right] \\
%      \le&\ L \E\left[ \exp\left(-\lambda \psi_*(2K) t -\frac{1}{2}[M]_t + M_t \right) \right] \\
%      &\ + L\E\left[ \exp\left( -\frac{1}{2}[M]_t + M_t \right); \sup_{0\le s\le t} \|x(s)\|_p\ge K \right] \\
%      &\ + \E\left( \|v(0)\|_p; \|v(0)\|_p\ge L \right) \\
%      =&\ L e^{-\lambda \psi_*(2K)t} + L\E\left[ \exp\left( -\frac{1}{2}[M]_t + M_t \right); \sup_{0\le s\le t} \|x(s)\|_p\ge K \right] \\
%      &\ + \E\left( \|v(0)\|_p; \|v(0)\|_p\ge L \right),
%    \end{split}
%  \end{equation}
  where in the first equality we used the independence of $v(0)$ and $W$, and in the last inequality we used the fact from the assumption that $\psi(r)\ge\psi_*(r)>0$ for all $r\ge0$ and $\psi_*(r)$ is non-increasing in $r$. Now we define
  \begin{equation}\label{E-42}
    I(t) := \E\left[ \|v(0)\|_p\exp\left( -\frac{1}{2} D^2 t + D W(t)\right); X_t\ge K \right].
  \end{equation}
%  By It\^o's formula, we have
%  \begin{equation*}
%    \int_0^t \exp\left( -\frac{1}{2}[M]_s + M_s \right) ds = t\exp\left( -\frac{1}{2}[M]_t + M_t \right) - \int_0^t s\exp\left( -\frac{1}{2}[M]_s + M_s \right) dM_s.
%  \end{equation*}
  The estimates \eqref{apriori-1} and \eqref{apriori-2} as well as $\psi>0$ yield that
  \begin{equation}\label{E-49}
    X_t \le \|x(0)\|_p + \|v(0)\|_p \int_0^t \exp\left( -\frac{1}{2} D^2 s + D W(s) \right) ds.
  \end{equation}
  Hence, we have for all $\theta\in(0,1)$ that,
  \begin{equation*}
    \begin{split}
      \left\{ X_t\ge K \right\} \subset&\ \left\{ \|x(0)\|_p \ge \frac{K}{2} \right\} \cup \left\{ \|v(0)\|_p \ge \left(\frac{K}{2}\right)^{1-\theta} \right\} \\
      &\ \cup\left\{ \int_0^t \exp\left( -\frac{1}{2} D^2 s + D W(s) \right) ds \ge \left(\frac{K}{2}\right)^\theta \right\}.
    \end{split}
  \end{equation*}
  According to this, we have
    \begin{align}
      I(t) \le&\ \E\left[ \|v(0)\|_p\exp\left( -\frac{1}{2} D^2 t + D W(t) \right); \|x(0)\|_p \ge \frac{K}{2} \right] \notag\\
      &\ + \E\left[ \|v(0)\|_p\exp\left( -\frac{1}{2} D^2 t + D W(t)\right); \|v(0)\|_p \ge \left(\frac{K}{2}\right)^{1-\theta} \right] \label{E-43}\\
      &\ + \E\left[ \|v(0)\|_p\exp\left( -\frac{1}{2} D^2 t + D W(t)\right); \int_0^t \exp\left( -\frac{1}{2} D^2 s + D W(s) \right) ds \ge \left(\frac{K}{2}\right)^\theta \right] \notag\\
      =:&\ I_1(t) + I_2(t) + I_3(t). \notag
    \end{align}
  We first estimate $I_3$. Using H\"older's inequality and Chebyshev's inequality, as well as the independence of $v(0)$ with $W$, we obtain that for every $a>1$,
  \begin{equation*}
    \begin{split}
      I_3(t) &\le \E\left( \|v(0)\|_p \right) \left( \E\left[ \exp\left( -\frac{a}{2}D^2t + aDW(t) \right)\right] \right)^{1/a} \\
      &\quad\times\left[ \P \left( \int_0^t \exp\left( -\frac{1}{2} D^2 s + D W(s) \right) ds \ge \left(\frac{K}{2}\right)^\theta \right) \right]^{1/a'} \\
      &\le \E\left( \|v(0)\|_p \right) \exp\left( \frac{a-1}{2}D^2 t \right) \left[ \left(\frac{K}{2}\right)^{-\theta} \int_0^t \E\left( \exp\left( -\frac{1}{2} D^2 s + D W(s) \right) \right) ds \right]^{1/a'} \\
      &= \E\left( \|v(0)\|_p \right) \left(\frac{2^\theta t e^{\frac{a}{2}D^2t}}{K^\theta}\right)^{1/a'},
    \end{split}
  \end{equation*}
  where we denote as before $a'$ the conjugate of $a$.
%  Now in case that $\int_0^\infty D^2(s) ds <\infty$, or equivalently, $[M]_\infty<\infty$, we let the auxiliary parameter $K$ to be time-dependent as $K(t) = \frac{1}{2}t^{1+\e}$ with $\e>0$ the constant in the assumptions, and choose $\theta\in(0,1)$ such that $\theta(1+\e)-1>0$. Then
%  \begin{equation}\label{est-24}
%    I_3(t) \le \left(\frac{4^\theta e^{\frac{q}{2}[M]_\infty}}{t^{\theta(1+\e)-1}}\right)^{1/q'} \to 0, \quad\text{as } t\to\infty.
%  \end{equation}
%
%  In the case $\lim_{t\to\infty}tD^2(t) = \infty$, one can easily get that for each $\delta>0$,
%  \begin{equation}\label{est-29}
%    \lim_{t\to\infty}\frac{t}{e^{\frac{1}{\delta} [M]_t}} = 0.
%  \end{equation}
%  Indeed, since $[M]'_t = D^2(t)$, the L'H\^opital's rule yields, as $t\to\infty$, that
%  \begin{equation*}
%    \frac{t}{e^{\frac{1}{\delta} [M]_t}} = \exp\left( \log t - \frac{1}{\delta} [M]_t \right) = \exp\left[ \left( 1- \frac{[M]_t}{\delta\log t} \right) \log t \right] \to \exp\left[ \left( 1- \frac{1}{\delta} tD^2(t) \right) \log t \right] \to 0.
%  \end{equation*}
 Now, we take $K(t) = \frac{1}{2}e^{\e D^2 t}$ where $\e>\frac{1}{2}$ is the constant mentioned in the assumption for $\psi$, and choose the positive constant $\theta$ and $a$ to satisfy the following conditions,
 \[\theta\in(0,1),\quad a>1,\quad \e\theta-\frac{a}{2}>0.\]
As $\e>\frac{1}{2}$, we can always choose $\theta$ and $a$ to be sufficiently closed to $1$ so that above inequalities hold. Then using the observation in Remark \ref{R5.5}.(ii) that $\E\left( \|v(0)\|_p \right) <\infty$, we have the following estimate for the term $I_3(t)$,
  \begin{equation}\label{E-16}
   \lim_{t\rightarrow+\infty} I_3(t) \le \E\left( \|v(0)\|_p \right) \lim_{t\rightarrow+\infty} \left(\frac{4^\theta t}{e^{(\e\theta-\frac{a}{2})D^2 t}}\right)^{1/a'} = 0.
  \end{equation}
%
%  In the case (iii), since $\lim_{t\to\infty}[M]_t - \delta\log t = \infty$ for some $\delta>0$, \eqref{est-29} holds for this $\delta$. We also take $K(t) = \frac{1}{2}e^{\e [M]_t}$ but with $\e>\frac{1}{2}+\frac{1}{\delta}$, and choose $\theta\in(0,1)$ and $q>1$ such that $\e\theta-\frac{q}{2}=\frac{1}{\delta}$. Then \eqref{est-28} also holds.
%
%According to the condition of $\psi$, we have $K(t)$ goes to infinity as $t\to\infty$, and $\lim_{t\to\infty}e^{-\lambda \psi_*(2K(t))t} = 0$ by the assumptions.
 Next, for the term $I_1(t)$, we use the independence of $(x(0),v(0))$ with $W$ to obtain that,
  \begin{equation}\label{E-44}
    \begin{split}
      I_1(t) &= \E\left[ \exp\left( -\frac{1}{2}D^2 t + D W(t) \right) \right] \E\left( \|v(0)\|_p; \|x(0)\|_p \ge \frac{K(t)}{2} \right) \\
      &= \E\left( \|v(0)\|_p; \|x(0)\|_p \ge \frac{K(t)}{2} \right).
    \end{split}
  \end{equation}
  Hence, the uniform integrability assumption for $\|v(0)\|_p$ and Remark \ref{R5.5}.(iii) yields the vanishing of $I_1(t)$ as $t$ goes to infinity. The term $I_2$ can be treated in the same way, and thus we have
   \begin{equation}\label{E-17}
   \lim_{t\rightarrow+\infty}I_1(t)=\lim_{t\rightarrow+\infty}I_2(t)=0.
   \end{equation}
 Finally, according to the assumption \eqref{asspt-psi-1} for $\psi$, we know that
 \begin{equation}\label{E-18}
  \lim_{t\to\infty} \E\left( \|v(0)\|_p \right) e^{-\lambda \psi_*(2K(t))t} = 0.
 \end{equation}
  Now combining \eqref{E-15}, \eqref{E-16}, \eqref{E-17} and \eqref{E-18},
  we conclude by letting $t\to\infty$ in both sides of \eqref{E-15} that,
  $$\lim_{t\to\infty} \E(\|v(t)\|_p) = 0.$$
  This proves the velocity alignment.
\end{proof}

\begin{remark}
  One can easily modify the proof of this lemma to adapt to the situation $D\equiv 0$. Indeed, as \eqref{E-49} now reduces to $X_t \le \|x(0)\|_p + t\|v(0)\|_p$, the case becomes much easier. We only need to let $K=K(t) = t^{1+\e}$ for some $\e>0$ and replace the paragraph from \eqref{E-43} to \eqref{E-17} by
  \begin{equation*}
    \begin{split}
      I(t) &\le \E\left( \|v(0)\|_p; \|x(0)\|_p + t\|v(0)\|_p \ge K \right) \\
      &\le \E\left( \|v(0)\|_p; \|x(0)\|_p \ge t^{1+\e}/2 \right) + \E\left( \|v(0)\|_p; \|v(0)\|_p \ge t^\e/2 \right) \\
      &\to 0,
    \end{split}
  \end{equation*}
  as $t\to+\infty$, where the last convergence is due to the uniform integrability of $\|v(0)\|_p$ as well as Remark \ref{R5.5}.(iii). Consequently, this lemma is still valid for the case $D\equiv 0$ if we replace the assumption \eqref{asspt-psi-1} by
  \begin{equation*}
    \lim_{t\rightarrow +\infty}\psi_*(t^{1+\e})t =+\infty, \quad \text{for some } \e>0.
  \end{equation*}
  As we will see soon, this is exactly the assumption for $\psi$ in Lemma \ref{align-integrable} in the next subsection, where the unconditional alignment for square integrable intensity is considered. For this reason, we do not attempt to cover the case of constant zeros intensity in the statement of the present lemma, and we just leave this case to Lemma \ref{align-integrable}. The same situation also applies for the uncondition flocking results in Theorem \ref{uncond-flock-constant-D} and \ref{flocking-integrable}.
\end{remark}

%Next, in order to yield the flocking estimate, we need some integrability of the velocity expectation. We say that a function $F$ on $[0,\infty)$ is eventually monotone, if it is monotone on $[T,\infty)$ with some $T>0$. The next lemma about the integrability will be frequently used when we prove the flocking.
%\begin{lemma}\label{improper}
%  Let $\beta\ge0$, $a>0$. Let $F$ be a function on $[a,\infty)$. A sufficient condition for the improper integral $\int_a^\infty t^\beta e^{-F(t)}dt$ to converge is that there exists $\eta>0$ such that $F(t) - (\beta+1+\eta) \log t$ is eventually non-decreasing and goes to infinity as $t\to\infty$.
%\end{lemma}
%\begin{proof}
%  We write
%  \begin{equation*}
%    \int_a^\infty t^\beta e^{-F(t)}dt = \int_a^\infty \frac{1}{t^{1+\eta}} e^{-F(t) + (\beta+1+\eta) \log t}dt.
%  \end{equation*}
%  Since $\int_a^\infty \frac{1}{t^{1+\eta}} dt < \infty$ and $e^{-F(t) + (\beta+1+\eta) \log t}$ eventually decreases to zero, the result is a consequence of Dirichlet's test.
%\end{proof}
Now, we are ready to introduce the unconditional flocking results for the constant intensity. In order to yield the flocking estimates, we need the integrability of the velocity expectation, and thus a stronger communication weight is required. Moreover, we will see from the proof that the emergence of the flocking is actually at least algebraically fast.

\begin{theorem}[Unconditional flocking for nonzero constant noise intensity]\label{uncond-flock-constant-D}
Let $p\ge2$, $D$ be a nonzero constant, and the assumptions in Theorem \ref{local} and Theorem \ref{coll-avoid} be fulfilled. %and assume the system \eqref{CS} admits a global strong solution $(x,v)$.
Assume that $\E ( \|x(0)\|_p) < \infty$ and $[\E\left( \|v(0)\|_p^q \right)]^{1/q}<\infty$ for some $1<q\le\infty$. Suppose $\psi_*(r) >0$ for all $r\ge0$, and there exist $\e>\frac{1}{2}$ and $\eta>\frac{1}{\lambda}$ such that $\psi$ has slow decay structure at far field as below,
\begin{equation}\label{E-19}
   \lim_{t\rightarrow +\infty}\left(\psi_*(e^{\e D^2 t})t - \eta \log t\right)=+\infty.
\end{equation}
Then the unconditional flocking in mean (defined in Definition \ref{def-flocking}) emerges algebraically fast.
\end{theorem}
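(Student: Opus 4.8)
The plan is to verify the two requirements of Definition \ref{def-flocking}: the velocity alignment $\lim_{t\to\infty}\E(\|v(t)\|_p)=0$, \emph{with an algebraic rate}, and the group forming $\sup_{t\ge0}\E(\|x(t)\|_p)<\infty$. First I would observe that the present hypotheses are strictly stronger than those of Lemma \ref{align-constant-D}: condition \eqref{E-19} forces $\psi_*(e^{\e D^2 t})t\to+\infty$, which is \eqref{asspt-psi-1}, and $[\E(\|v(0)\|_p^q)]^{1/q}<\infty$ for some $q>1$ forces uniform integrability of $\|v(0)\|_p$ by Remark \ref{R5.5}.(ii). So the qualitative alignment is already in hand; the real task is to make it quantitative, and then to deduce aggregation essentially for free from \eqref{apriori-1}.

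For the quantitative alignment I would revisit the proof of Lemma \ref{align-constant-D} keeping track of rates. Starting from \eqref{E-15} with the same choice $K=K(t)=\frac{1}{2}e^{\e D^2 t}$ (so that $\psi_*(2K(t))=\psi_*(e^{\e D^2 t})$), one has
\[
\E(\|v(t)\|_p)\le \E(\|v(0)\|_p)\,e^{-\lambda\psi_*(e^{\e D^2 t})t}+I(t),
\]
where $I(t)$ is as in \eqref{E-42}--\eqref{E-43}. The first (``good'') term is controlled by \eqref{E-19}: since $\psi_*(e^{\e D^2 t})t\ge \eta\log t$ for all large $t$, it is bounded by $\E(\|v(0)\|_p)\,t^{-\lambda\eta}$ eventually. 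For the ``bad'' term $I(t)=I_1(t)+I_2(t)+I_3(t)$ I would keep the splitting \eqref{E-43}: the exponential growth of $K(t)$ upgrades the mere vanishing of $I_1,I_2$ obtained in Lemma \ref{align-constant-D} from uniform integrability alone to \emph{exponential decay in} $t$, by combining Chebyshev's inequality $\P(\|x(0)\|_p\ge K(t)/2)\le 2\E(\|x(0)\|_p)/K(t)$ with the $L^q$-bound on $\|v(0)\|_p$ for $I_1$, and the tail estimate $\E(\|v(0)\|_p;\|v(0)\|_p\ge (K(t)/2)^{1-\theta})\le (K(t)/2)^{-(q-1)(1-\theta)}\E(\|v(0)\|_p^q)$ for $I_2$ (the case $q=\infty$ only makes these easier); while $I_3(t)$ is already exponentially small, as in Lemma \ref{align-constant-D}, once $\theta\in(0,1)$ and $a>1$ are taken close enough to $1$ with $\e\theta>a/2$, which is possible because $\e>\frac{1}{2}$. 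Since also $\E(\|v(t)\|_p)\le\E(\|v(0)\|_p)$ on every bounded time interval by \eqref{E-14} (using $\psi>0$), collecting these bounds yields a finite constant $c$ with
\[
\E(\|v(t)\|_p)\le c\,(1+t)^{-\lambda\eta}\qquad\text{for all }t\ge0,
\]
and $\lambda\eta>1$ by the hypothesis $\eta>1/\lambda$, which is exactly the asserted algebraically fast velocity alignment.

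The aggregation then follows at once: integrating \eqref{apriori-1} and taking expectations,
\[
\E(\|x(t)\|_p)\le \E(\|x(0)\|_p)+\int_0^t\E(\|v(s)\|_p)\,ds\le \E(\|x(0)\|_p)+c\int_0^\infty(1+s)^{-\lambda\eta}\,ds<\infty,
\]
the last integral converging precisely because $\lambda\eta>1$; taking the supremum over $t\ge0$ gives $\sup_{t\ge0}\E(\|x(t)\|_p)<\infty$, so both conditions of Definition \ref{def-flocking} hold.

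The step I expect to be the main obstacle is the quantitative control of the bad-event term $I(t)$: Lemma \ref{align-constant-D} only needs it to vanish, whereas here one must extract an \emph{explicit} rate, arranging that the only genuinely algebraic contribution to $\E(\|v(t)\|_p)$ comes from the communication weight through \eqref{E-19} while $I_1+I_2+I_3$ is driven to exponential smallness. The delicate bookkeeping is the compatible choice of the auxiliary exponents $\theta$ and $a$ — they must simultaneously keep $I_3$ decaying (needing $\e\theta-a/2>0$) and be controllable under the given $L^q$ moment — together with the verification that the final exponent $\lambda\eta$ exceeds $1$, which is exactly what $\eta>1/\lambda$ buys and what makes the time integral in the aggregation step converge.
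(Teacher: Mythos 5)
Your proof is correct and follows essentially the same route as the paper: the same comparison estimate \eqref{E-15} with the choice $K(t)=\tfrac{1}{2}e^{\e D^2 t}$, the same $I_1+I_2+I_3$ decomposition of the bad-event term with $\theta,a$ near $1$ and $\e\theta>a/2$, and the same use of \eqref{E-19} together with $\lambda\eta>1$ to produce a convergent time integral. The only difference is presentational: you first extract the pointwise bound $\E(\|v(t)\|_p)\le c(1+t)^{-\lambda\eta}$ and then integrate, whereas the paper invokes Lemma \ref{align-constant-D} for the qualitative alignment and directly bounds the time integrals $\int_T^\infty e^{-\lambda\psi_*(2K(s))s}\,ds$ and $\int_T^\infty I_j(s)\,ds$ term by term.
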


\begin{remark}
It is easy to check the function $\psi(r) = |\log (1+r)|^{-\alpha}$ with $0<\alpha<1$ mentioned  in Remark \ref{R5.5} also satisfies the assumption \eqref{E-19} above.
\end{remark}

\begin{proof}
  It follows from the discussion in Remark \ref{R5.5}.(ii) that all the requirements in Lemma \ref{align-constant-D} are satisfied under the assumptions in the present theorem. This directly yields the emergence of alignment. Then, we only need to prove the aggregation, i.e.,
  \begin{equation}\label{E-20}
  \sup_{t\ge0} \E(\|x(t)\|_p) <\infty.
  \end{equation}

  For this purpose, we first observe from the assumption \eqref{E-19} that there exists constants $C, T>0$, such that,
  \begin{equation}\label{E-41}
    \psi_*(e^{\e D^2 t})t \ge \eta \log t + C, \quad  \text{for all } t\ge T.
  \end{equation}
  Next, we apply \eqref{E-14} and the independence between the initial data and the Brownian motion $W$, we find that
  \[\E(\|v(t)\|_p) \le \E( \|v(0)\|_p), \quad  \text{for all } t\ge0.\]
  Then, combining the above estimate with \eqref{apriori-1} and \eqref{E-15}, we obtain,
%  \begin{equation*}
%    \E(\|x(t)\|_p) \le \E \left( \|x(0)\|_p \right) + \int_0^t \E \left( \|v(s)\|_p \right) ds \le \E(\|x(0)\|_p) + t\E(\|v(0)\|_p).
%  \end{equation*}
%  Now, we combine above estimates and \eqref{E-15} to obtain that,
  \begin{equation}\label{E-21}
  \begin{aligned}
    &\ \sup_{t\ge0} \E(\|x(t)\|_p) \\
    \le&\ \E \left( \|x(0)\|_p \right) + \int_0^T \E \left( \|v(s)\|_p \right) ds + \int_T^\infty \E \left( \|v(s)\|_p \right) ds\\
    \le&\ \E \left( \|x(0)\|_p \right) + T\E \left( \|v(0)\|_p \right)  + \int_T^\infty \E \left( \|v(s)\|_p \right) ds\\
    \le&\ \E \left( \|x(0)\|_p \right) + T\E \left( \|v(0)\|_p \right) + \left( \E \left( \|v(0)\|_p \right) \int_T^\infty e^{-\lambda \psi_*(2K(s))s} ds +  \int_T^\infty I(s) ds\right),
    \end{aligned}
  \end{equation}
 where $T$ is the constant in \eqref{E-41}, the function $I$ in the last inequality is defined in \eqref{E-42}. As $\E \left( \|x(0)\|_p \right)$ and $ \E \left( \|v(0)\|_p \right) $ are both finite by the assumption, we can verify \eqref{E-20}  by showing the following finiteness due to \eqref{E-21},
   \begin{equation*}
    \int_T^\infty e^{-\lambda \psi_*(2K(s))s} ds < \infty, \quad \int_T^\infty I(s) ds < \infty
  \end{equation*}
  for some function $K=K(t)$. Now, we take $K(t) = \frac{1}{2}e^{\e D^2t}$, and apply \eqref{E-41} to have the following estimate,
%  \begin{equation}\label{E-22}
%  \begin{aligned}
%\lambda \psi_*(2K(s))s&= \lambda \psi_*(2K(s))s - \lambda\eta \log s + \lambda\eta \log s\\
%&\geq \min_{s\geq 0}\left( \lambda \psi_*(2K(s))s - \lambda\eta \log s\right)+ \lambda\eta \log s\\
%&=C+\lambda\eta \log s,
%\end{aligned}
%  \end{equation}
% where the finite lower bound $C$ in the last equality is due to the fact $\lim\limits_{t\rightarrow +\infty}\left(\psi_*(e^{\e D^2 t})t - \eta \log t\right)=+\infty$ and the positive lower bound of $\psi(r)$ in bounded domain. Now, we apply \eqref{E-22} to obtain that
 \begin{align}
   \int_T^\infty e^{-\lambda \psi_*(2K(s))s} ds\leq   \int_T^\infty e^{-\lambda C-\lambda\eta \log s} ds = e^{-\lambda C} \int_T^\infty \frac{1}{s^{\lambda \eta}}ds< \infty\label{E-23},
  \end{align}
  where the last inequality is due to the assumption that $\eta>\frac{1}{\lambda}$. To estimate the time integral of $I$, we recall the estimates in Lemma \ref{align-constant-D} and split $I$ into three parts as in \eqref{E-43},
  \begin{equation}\label{E-24}
  I(t)\le I_1(t)+I_2(t)+I_3(t).
  \end{equation}
  Now, as $\e>\frac{1}{2}$, we can follow the proof of Lemma \ref{align-constant-D} to choose $\theta\in(0,1)$ and $a>1$ so that $\e\theta-\frac{a}{2}>0$. Then, it follows from \eqref{E-43} and \eqref{E-16} by applying H\"older's inequality and Chebyshev's inequality, as well as the independence of $v(0)$ with $W$ that
  \begin{align}
  \int_T^\infty I_1(s) ds &= \int_T^\infty \E\left( \|v(0)\|_p; \|x(0)\|_p \ge \frac{e^{\e D^2t}}{4} \right) ds \notag \\
  &\le \int_T^\infty \left[\E\left( \|v(0)\|_p^q \right)\right]^{\frac{1}{q}} \left[\frac{4\E\left( \|x(0)\|_p \right)}{e^{\e D^2t}}\right]^{\frac{1}{q'}} ds< \infty, \notag \\
  \int_T^\infty I_2(s) ds &= \int_T^\infty \E\left( \|v(0)\|_p; \|v(0)\|_p \ge \left( \frac{e^{\e D^2t}}{4} \right)^{1-\theta} \right) ds \notag \\
  &\le \int_T^\infty \left[\E\left( \|v(0)\|_p^q \right)\right]^{\frac{1}{q}} \left[\frac{4^{1-\theta}\E\left( \|v(0)\|_p \right)}{e^{\e(1-\theta) D^2t}}\right]^{\frac{1}{q'}} ds < \infty, \notag\\
  \int_T^\infty I_3(s) ds &\le \E\left( \|v(0)\|_p \right) \int_T^\infty \left(\frac{4^\theta t}{e^{(\e\theta-\frac{a}{2})D^2 t}}\right)^{1/a'} ds < \infty, \notag
  \end{align}
  where $q'$ is the conjugate of $q$. Combining these three estimates with \eqref{E-21}, \eqref{E-23} and \eqref{E-24}, the emergence of aggregation \eqref{E-20} follows. Moreover, due to the last three inequalities and the estimate \eqref{E-23}, it is clear that the flocking occurs at least algebraic fast. This finishes the proof.
\end{proof}

\subsection{Square integrable intensity $D(t)$}\label{subsec:5-2}
In this part, we will discuss the case when the noise intensity $D(t)$ is varying and square integrable over $t\in[0,\infty)$. In this case, the time integral of the exponential martingale is only well defined in finite time, and thus the arguments in Theorem \ref{cond-flock} to obtain the conditional flocking no longer applies. In the following, we will show the unconditional flocking in mean (defined in Definition \ref{def-flocking}) for a class of strong long-range communication weights.

\begin{lemma}[Unconditional alignment for square integrable noise intensity]\label{align-integrable}
Let $p\ge2$, $\int_0^\infty D^2(s) ds <\infty$, and the assumptions in Theorem \ref{local} and Theorem \ref{coll-avoid} be fulfilled. %and assume the system \eqref{CS} admits a global strong solution $(x,v)$.
Assume that $\|v(0)\|_p$ is uniformly integrable, that is, $\lim_{L\to\infty}\E( \|v(0)\|_p; \|v(0)\|_p\ge L ) = 0$. Suppose $\psi_*(r) >0$ for all $r\ge0$, and there exists $\e>0$ such that the communication $\psi$ has asymptotic structure at far field as below,
    \begin{equation}\label{E-26}
      \lim_{t\rightarrow +\infty}\psi_*(t^{1+\e})t =+\infty.
    \end{equation}
%\begin{enumerate}
%\item $\psi_*(r) := \inf_{0\le s\le r}\psi(s)>0$ for all $r>0$
%\item There exists a positive constant $\e$ such that, the communication $\psi$ has asymptotic structure at far field as below,
%    \begin{equation}\label{E-26}
%   \e>0,\quad \lim_{t\rightarrow +\infty}\psi_*(t^{1+\e})t =+\infty.
%    \end{equation}
%\item The initial data satisfy the following bounded condition almost surely,
%  \begin{equation}\label{asmp-2}
%    4\|x(0)\|_p \|v(0)\|_p \left[ \int_0^\infty \exp\left( -\frac{\beta D^2}{2(\beta-2)}s + \frac{\beta D}{\beta-2}W(s) \right) ds \right]^{1-\frac{2}{\beta}} \left(\frac{2^\beta}{q\lambda} \right)^{\frac{2}{\beta}} < 1.
%  \end{equation}
%  \item The initial distribution $\mu_0(x,v)$ in \eqref{initial-cond} has finite expectation in both $x$ and $v$, i.e.,
%  \[\E ( \|x(0)\|_p) < \infty,\quad \E ( \|v(0)\|_p) < \infty.\]
%  \end{enumerate}
Then the unconditional velocity alignment in mean occurs. %algebraically fast. %i.e., $\lim_{t\to\infty} \E(\|v(t)\|_p) = 0$.
\end{lemma}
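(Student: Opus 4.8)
The plan is to follow the three-term decomposition used in the proof of Lemma \ref{align-constant-D}, with the geometric Brownian motion there replaced by the exponential martingale $\mathcal E$ of Definition \ref{exponential martingale}, and to exploit that, when $\int_0^\infty D^2(s)\,ds<\infty$, the positive moments $\E(\mathcal E(t)^a)$ stay bounded uniformly in $t$ — in sharp contrast with the constant-intensity case, where they grow exponentially — which is precisely what permits a merely polynomial choice of the auxiliary cut-off $K(t)$, and hence the weaker far-field hypothesis \eqref{E-26}.

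First I would record the comparison estimate: arguing exactly as for \eqref{E-14} (applying, as in the proof of Theorem \ref{cond-flock}, the comparison theorem for one-dimensional SDEs with random drift to $d\|v\|_p\le -\lambda\psi_*(2\|x\|_p)\|v\|_p\,dt+D(t)\|v\|_p\,dW$) yields, with probability one,
\[
  \|v(t)\|_p\ \le\ \|v(0)\|_p\,\exp\!\Big(-\lambda\int_0^t\psi_*(2\|x(s)\|_p)\,ds\Big)\,\mathcal E(t),\qquad t\ge 0,
\]
where $\mathcal E(t)=\exp(-\tfrac12\int_0^t D^2(s)\,ds+\int_0^t D(s)\,dW(s))$. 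Writing $X_t:=\sup_{0\le s\le t}\|x(s)\|_p$ and splitting the expectation over $\{X_t<K\}$ and $\{X_t\ge K\}$, I would use that $\psi_*(2\|x(s)\|_p)\ge\psi_*(2K)$ for $s\le t$ on the first event (by monotonicity of $\psi_*$), that $\exp(-\lambda\int_0^t\psi_*)\le 1$ on the second event (by $\psi_*>0$), together with $\E\mathcal E(t)=1$ and the independence of $v(0)$ and $W$, to get
\[
  \E(\|v(t)\|_p)\ \le\ \E(\|v(0)\|_p)\,e^{-\lambda\psi_*(2K)t}\ +\ I(t),\qquad I(t):=\E\big[\|v(0)\|_p\,\mathcal E(t);\,X_t\ge K\big].
\]

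Next I would bound $X_t$: by \eqref{apriori-1}, \eqref{apriori-2} and $\psi_*>0$ one has, as in \eqref{E-49}, $X_t\le\|x(0)\|_p+\|v(0)\|_p\int_0^t\mathcal E(s)\,ds$, so that for any $\theta\in(0,1)$,
\[
  \{X_t\ge K\}\ \subset\ \Big\{\|x(0)\|_p\ge\tfrac{K}{2}\Big\}\cup\Big\{\|v(0)\|_p\ge\big(\tfrac{K}{2}\big)^{1-\theta}\Big\}\cup\Big\{\int_0^t\mathcal E(s)\,ds\ge\big(\tfrac{K}{2}\big)^{\theta}\Big\},
\]
and hence $I(t)\le I_1(t)+I_2(t)+I_3(t)$ with the evident meaning. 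Since $(x(0),v(0))$ is independent of $W$ and $\E\mathcal E(t)=1$, one has $I_1(t)=\E(\|v(0)\|_p;\,\|x(0)\|_p\ge K/2)$ and $I_2(t)=\E(\|v(0)\|_p;\,\|v(0)\|_p\ge(K/2)^{1-\theta})$; both tend to $0$ once $K=K(t)\to\infty$ — the first because $\P(\|x(0)\|_p\ge K/2)\to 0$ and $\|v(0)\|_p$ is uniformly integrable (Remark \ref{R5.5}.(iii)), the second directly by uniform integrability. For $I_3$, H\"older's inequality with conjugate exponents $a,a'$, followed by $\E\mathcal E(t)=1$, Markov's inequality and Fubini, gives $I_3(t)\le\E(\|v(0)\|_p)\,[\E(\mathcal E(t)^a)]^{1/a}\,(t\,(K/2)^{-\theta})^{1/a'}$; and here comes the crucial computation: writing $\mathcal E(t)^a=\exp(-\tfrac12\int_0^t(aD)^2\,ds+\int_0^t aD\,dW)\cdot\exp(\tfrac{a(a-1)}{2}\int_0^t D^2\,ds)$ and using that the first factor is a genuine martingale (Novikov) gives $\E(\mathcal E(t)^a)=\exp(\tfrac{a(a-1)}{2}\int_0^t D^2(s)\,ds)\le\exp(\tfrac{a(a-1)}{2}\int_0^\infty D^2(s)\,ds)=:c_a<\infty$, uniformly in $t$.

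Finally I would close the argument by choosing $K(t)=\tfrac12 t^{1+\e}$ and fixing $\theta\in(\tfrac1{1+\e},1)$ and any $a>1$: then $\E(\|v(0)\|_p)\,e^{-\lambda\psi_*(2K(t))t}=\E(\|v(0)\|_p)\,e^{-\lambda\psi_*(t^{1+\e})t}\to0$ by \eqref{E-26}, while $I_1(t),I_2(t)\to0$ as above and $I_3(t)\le c_a^{1/a}4^{\theta/a'}\E(\|v(0)\|_p)\,t^{(1-\theta(1+\e))/a'}\to0$ since $\theta(1+\e)>1$; letting $t\to\infty$ in the displayed bound for $\E(\|v(t)\|_p)$ yields $\lim_{t\to\infty}\E(\|v(t)\|_p)=0$. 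The only genuine obstacle is the term $I_3$, i.e.\ controlling the probability that $X_t$ is large — a quantity that now grows linearly in $t$ because $\mathcal E(s)$ no longer decays as $s\to\infty$; the square-integrability of $D$ is exactly what keeps $\E(\mathcal E(t)^a)$ bounded and thereby makes the polynomial cut-off $K(t)=\tfrac12 t^{1+\e}$ (and the hypothesis \eqref{E-26}) sufficient, whereas the constant-intensity case of Lemma \ref{align-constant-D} required an exponential cut-off and the stronger hypothesis \eqref{asspt-psi-1}.
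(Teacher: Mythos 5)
Your proof is correct, and it takes the route the paper explicitly acknowledges (Remark \ref{alignment-special}.(ii)) as an available alternative: you carry out the single $K$-cutoff exactly as in \eqref{E-15} of Lemma \ref{align-constant-D}, arriving at $\E(\|v(t)\|_p)\le\E(\|v(0)\|_p)e^{-\lambda\psi_*(2K)t}+I(t)$ with $I(t)=\E[\|v(0)\|_p\,\mathcal E(t);\,X_t\ge K]$, then split $I\le I_1+I_2+I_3$ and close using the uniform bound $\E(\mathcal E(t)^a)=\exp(\tfrac{a(a-1)}{2}\int_0^t D^2(s)\,ds)\le c_a<\infty$. The paper instead inserts a second cutoff $L$ on $\|v(0)\|_p$ \emph{before} the $K$-split, yielding the estimate \eqref{E-29} of the form $\E(\|v(t)\|_p)\le Le^{-\lambda\psi_*(2K)t}+L\hat I(t)+\E(\|v(0)\|_p;\|v(0)\|_p\ge L)$, with $\hat I(t)$ free of the factor $\|v(0)\|_p$; this extra bookkeeping is not needed for the alignment statement itself but is precisely what makes \eqref{E-29} integrable in $t$ (with $L(t)=t^\delta$) in the proof of the subsequent Theorem \ref{flocking-integrable}. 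The two decompositions rest on the same three ingredients — the comparison bound \eqref{E-27}, the polynomial cutoff $K(t)=\tfrac12 t^{1+\e}$ with $\theta\in(\tfrac{1}{1+\e},1)$, and uniform integrability of $\|v(0)\|_p$ for $I_1,I_2$ — and your framing, emphasizing that $\E(\mathcal E(t)^a)$ stays bounded when $\int_0^\infty D^2<\infty$ (as opposed to growing exponentially in the constant-$D$ case), isolates cleanly what the square-integrability hypothesis buys.
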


\begin{remark}\label{alignment-special}
  (i). Two candidates for $\psi$ to fulfill the assumptions in Lemma \ref{align-integrable} are the two common cases $\psi(r) = r^{-\alpha}$ and $\psi(r) = \frac{1}{(1+r^2)^{\alpha/2}}$ with $0<\alpha<1$. So in the case $D\equiv0$ which is clearly square integrable, if we take the deterministic initial data as in Remark \ref{R2.6}, then this lemma merely recovers the unconditional alignment results in \cite[Proposition 4.1, Proposition 4.3]{HL09}, where the requirement for the two common cases is instead $0\le\alpha\le1$. The gap is of course due to the way we handle the random initial data (which is distributed not necessarily of compact support) and the noise in a general setting.

  (ii). One can exactly follow the lines of Lemma \ref{align-constant-D} to proof this lemma. But we will not do so, because we need a more appropriate estimate than \eqref{E-15} when proving the algebraically fast flocking in mean in the next theorem.
\end{remark}
%\begin{remark}
%  (i). The assumption for the communication weight $\psi$ in Theorem \ref{alignment} can be fulfilled by the two common used cases $\psi(r) = \frac{1}{(1+r^2)^{\alpha/2}}$ and $\psi(r) = \frac{1}{r^\alpha}$.
%
%  (ii). Recall that if there exists $1<q\le\infty$ such that $\E ( \|v(0)\|_p^q) < \infty$, then $\|v(0)\|_p$ is uniformly integrable \cite[Section 13.3]{Wil91}. However, since the uniform integrability of $\|v(0)\|_p$ implies $\E ( \|v(0)\|_p) < \infty$, the assumption for $v(0)$ in Theorem \ref{alignment} is stronger than that in Proposition \ref{flocking-1}.
%\end{remark}

\begin{proof}
Similar as in \eqref{E-5} or \eqref{E-14}, we know that as long as a global solution $(x,v)$ exists that, with probability one, for all $t\ge0$,
  \begin{equation}\label{E-27}
    \|v(t)\|_p \le \|v(0)\|_p \exp\left( -\lambda \int_0^t\psi(2\|x(s)\|_p) ds -\frac{1}{2}\int_0^t D^2(s) ds + \int_0^t D(s)dW(s) \right).
  \end{equation}
  For notational simplicity, we denote as before that $X_t := \sup_{0\le s\le t} \|x(s)\|_p$. We also denote
  \begin{equation*}
    M_t := \int_0^t D(s)dW(s).
  \end{equation*}
%  $$\|v(0)\|_{p,\infty}:=\ess_{\omega\in\Omega} \|v(0)\|_p.$$
  Then $M=\{M_t\}_{t\ge0}$ is a martingale due to the local boundedness of $D$, and its quadratic variation is $[M]_t = \int_0^t D^2(s) ds$. As we said in the previous remark, we will not use the counterpart of the estimate \eqref{E-15}, even though it works. Instead, we apply \eqref{E-27} to obtain that for each $K>0$ and $L>0$,
  \begin{align}
      \E \left( \|v(t)\|_p \right) \le &\ \E \bigg[ \left( \ind_{\|v(0)\|_p< L\}} + \ind_{\|v(0)\|_p\ge L\}} \right) \notag\\
      & \quad\ \times \|v(0)\|_p \exp\left(-\lambda \int_0^t\psi(2\|x(s)\|_p) ds -\frac{D^2}{2}t+DW(t)\right) \bigg] \notag\\
      \le&\ L \E \left[ \exp\left(-\lambda \int_0^t\psi(2\|x(s)\|_p) ds -\frac{D^2}{2}t+DW(t)\right) \right] \notag\\
      &\ + \E\left[ \|v(0)\|_p\exp\left(-\frac{D^2}{2}t +DW(t)\right); \|v(0)\|_p\ge L \right] \notag\\
      =&\ L \E \bigg[ \left( \ind_{\{X_t< K\}} + \ind_{\{X_t\ge K\}} \right) \notag\\
      & \qquad\ \times \exp\left(-\lambda \int_0^t\psi(2\|x(s)\|_p) ds -\frac{D^2}{2}t+DW(t)\right) \bigg] \label{E-29}\\
      &\ + \E\left( \|v(0)\|_p; \|v(0)\|_p\ge L \right) \E\left[ \exp\left(-\frac{D^2}{2}t +DW(t)\right) \right] \notag\\
      \le&\ L \E\left[ \exp\left(-\lambda \psi_*(2K) t -\frac{D^2}{2}t+DW(t)\right) \right] \notag\\
      &\ + L\E\left[ \exp\left(-\frac{D^2}{2}t +DW(t)\right); X_t\ge K \right] \notag\\
      &\ + \E\left( \|v(0)\|_p; \|v(0)\|_p\ge L \right) \notag\\
      =&\ L e^{-\lambda \psi_*(2K)t} + L\E\left[ \exp\left(-\frac{D^2}{2}t +DW(t)\right); X_t\ge K \right] \notag\\
      &\ + \E\left( \|v(0)\|_p; \|v(0)\|_p\ge L \right),\notag
  \end{align}
  Now we let $K$ to be time-dependent as $K(t) = \frac{1}{2}t^{1+\e}$ with $\e>0$ the constant in the assumptions, and obtain from \eqref{E-26} that,
  \begin{equation}\label{E-30}
  \lim_{t\to\infty}e^{-\lambda \psi_*(2K(t))t} =  \lim_{t\to\infty}e^{-\lambda \psi_*\left(t^{1+\e}\right)t}=0.
  \end{equation}
 Next, we need to estimate the second term in \eqref{E-29}. In a similar fashion as \eqref{E-42} and \eqref{E-43}, if we denote
  \begin{equation}\label{E-45}
    \hat I(t) := \E\left[ \exp\left( -\frac{1}{2}[M]_t + M_t \right); X_t\ge K \right],
  \end{equation}
%  By It\^o's formula, we have
%  \begin{equation*}
%    \int_0^t \exp\left( -\frac{1}{2}[M]_s + M_s \right) ds = t\exp\left( -\frac{1}{2}[M]_t + M_t \right) - \int_0^t s\exp\left( -\frac{1}{2}[M]_s + M_s \right) dM_s.
%  \end{equation*}
%  The estimates \eqref{apriori-1} and \eqref{apriori-2} as well as $\psi>0$ yields that
%  \begin{equation*}
%    X_t \le \|x(0)\|_p + \|v(0)\|_p \int_0^t \exp\left( -\frac{1}{2}[M]_s + M_s \right) ds.
%  \end{equation*}
%  Hence, we have that for all $\theta\in(0,1)$,
%  \begin{equation*}
%    \begin{split}
%      &\ \left\{ X_t\ge K \right\} \\
%      \subset&\ \left\{ \|x(0)\|_p \ge \frac{K}{2} \right\} \cup \left\{ \|v(0)\|_p \ge \left(\frac{K}{2}\right)^{1-\theta} \right\} \cup\left\{ \int_0^t \exp\left( -\frac{1}{2}[M]_s + M_s \right) ds \ge \left(\frac{K}{2}\right)^\theta \right\}.
%    \end{split}
%  \end{equation*}
  then we have
  \begin{equation}\label{E-46}
    \begin{split}
      \hat I(t) \le&\ \E\left[ \exp\left( -\frac{1}{2}[M]_t + M_t \right); \|x(0)\|_p \ge \frac{K}{2} \right] \\
      &\ + \E\left[ \exp\left( -\frac{1}{2}[M]_t + M_t \right); \|v(0)\|_p \ge \left(\frac{K}{2}\right)^{1-\theta} \right] \\
      &\ + \E\left[ \exp\left( -\frac{1}{2}[M]_t + M_t \right); \int_0^t \exp\left( -\frac{1}{2}[M]_s + M_s \right) ds \ge \left(\frac{K}{2}\right)^\theta \right] \\
      =:&\ \hat I_1(t) + \hat I_2(t) + \hat I_3(t).
    \end{split}
  \end{equation}
  We first estimate $\hat I_3$. Actually, by using H\"older's inequality and Chebyshev's inequality, we obtain that for every $a>1$,
  \begin{equation*}
    \begin{split}
      \hat I_3(t) &\le \left( \E\left[ \exp\left( -\frac{a}{2}[M]_t + aM_t \right)\right] \right)^{1/a} \left[ \P \left( \int_0^t \exp\left( -\frac{1}{2}[M]_s + M_s \right) ds \ge \left(\frac{K}{2}\right)^\theta \right) \right]^{1/a'} \\
      &\le \exp\left( \frac{a-1}{2}[M]_t \right) \left[ \left(\frac{K}{2}\right)^{-\theta} \int_0^t \E\left( \exp\left( -\frac{1}{2}[M]_s + M_s \right) \right) ds \right]^{1/a'} \\
      &= \left(\frac{2^\theta t e^{\frac{a}{2}[M]_t}}{K^\theta}\right)^{1/a'},
    \end{split}
  \end{equation*}
  where as before $a'$ denotes the conjugate of $a$. As $[M]_\infty=\int_0^\infty D^2(s) ds <\infty$ and we already let $K(t) = \frac{1}{2}t^{1+\e}$, we may choose $\theta\in(0,1)$ such that $\theta(1+\e)-1>0$ and have the following limit,
  \begin{equation}\label{E-31}
   \lim_{t\rightarrow+\infty} \hat I_3(t) \le \lim_{t\rightarrow+\infty} \left(\frac{4^\theta e^{\frac{a}{2}[M]_\infty}}{t^{\theta(1+\e)-1}}\right)^{1/a'}= 0.
  \end{equation}
  On the other hand, in a similar way as \eqref{E-44} and \eqref{E-17}, using the independence of $(x(0),v(0))$ with $W$ as well as the inner regularity of the distribution of $\|x(0)\|_p$ and $\|v(0)\|_p$, we have
  \begin{equation}\label{E-32}
  \lim_{t\rightarrow+\infty}\hat I_1(t)=  \lim_{t\rightarrow+\infty}\hat I_2(t)= 0.
  \end{equation}
%  It was shown in \cite[Proposition 4.4.4]{Duf90} that the infinite time integral $\int_0^\infty \exp(-\frac{D^2}{2}s +DW(s)) ds$ is almost surely finite, which obeys the inverse-gamma distribution $\mathrm{Inv}$-$\mathrm{Gamma}(1,-\frac{D^2}{2})$. Hence, the inner regularity of probability distributions on $(\R,\B(\R))$ yields
%  \begin{equation}\label{est-21}
%    \P \left( \sup_{0\le s\le t} \|x(s)\|_p\ge K \right) \to 0, \quad\text{as } K\to\infty.
%  \end{equation}
%  On the other hand, recall the well-known fact that the stochastic exponential $\{\exp(-\frac{D^2}{2}t +DW(t))\}_{t\ge0}$ is a uniformly integrable martingale. The uniform integrability implies the uniformly absolute continuity, that is, for each $\e>0$, there is a $\delta>0$, such that for all $F\in\F$,
%  \begin{equation*}
%    \P(F) <\delta \quad\Rightarrow\quad \sup_{t\ge0} \E\left[ \exp\left(-\frac{D^2}{2}t +DW(t)\right); F \right] < \e.
%  \end{equation*}
%  Whereas from \eqref{est-21}, we know that for the preceding $\delta$, there is a $K_0>0$, such that for all $K>K_0$,
%  \begin{equation*}
%    \P \left( \sup_{0\le s\le t} \|x(s)\|_p\ge K \right) < \delta,
%  \end{equation*}
%  which yields
%  \begin{equation*}
%    \sup_{t\ge0} \E\left[ \exp\left(-\frac{D^2}{2}t +DW(t)\right); \sup_{0\le s\le t} \|x(s)\|_p\ge K \right] < \e.
%  \end{equation*}
%  Consequently,
%  \begin{equation*}
%    \lim_{K\to\infty} \sup_{t\ge0} \E\left[ \exp\left(-\frac{D^2}{2}t +DW(t)\right); \sup_{0\le s\le t} \|x(s)\|_p\ge K \right] = 0.
%  \end{equation*}
  Now combining \eqref{E-30}, \eqref{E-31} and \eqref{E-32}, we let $t\to\infty$ and $L\to\infty$ successively in both sides of \eqref{E-29}, and use the uniform integrability assumption for $\|v(0)\|_p$ to conclude that
  $$\lim_{t\to\infty} \E(\|v(t)\|_p) = 0.$$
  This shows the velocity alignment.
\end{proof}
Similar as the previous subsection, we need relative faster decay of the velocity expectation to yield the aggregation estimates. Therefore, we need a stronger communication $\psi$ and have the following flocking theorem.

\begin{theorem}[Unconditional flocking for square integrable noise intensity]\label{flocking-integrable}
Let $p\ge2$, $\int_0^\infty D^2(s) ds <\infty$, and the assumptions in Theorem \ref{local} and Theorem \ref{coll-avoid} be fulfilled. %and assume the system \eqref{CS} admits a global strong solution $(x,v)$.
Assume that $\E ( \|x(0)\|_p) < \infty$ and $[\E\left( \|v(0)\|_p^q \right)]^{1/q}<\infty$ for some $1<q\le\infty$. Suppose $\psi_*(r) >0$ for all $r\ge0$, and there exist $\e>2$ and $\eta> \frac{q}{\lambda(q-1)}$ such that the communication $\psi$ has asymptotic structure at far field as below,
\begin{equation*}
  \lim_{t\rightarrow +\infty}\left(\psi_*(t^{1+\e})t - \eta \log t\right) =+\infty.
\end{equation*}
Then the unconditional flocking in mean (defined in Definition \ref{def-flocking}) emerges algebraically fast.
%We further assume the following assertions hold,
%\begin{enumerate}
%\item $\psi_*(r) := \inf_{0\le s\le r}\psi(s)>0$ for all $r>0$
%\item There exists a positive constant $\e$ such that, the communication $\psi$ has asymptotic structure at far field as below,
%\item The initial data satisfy the following bounded condition almost surely,
%  \begin{equation}\label{asmp-2}
%    4\|x(0)\|_p \|v(0)\|_p \left[ \int_0^\infty \exp\left( -\frac{\beta D^2}{2(\beta-2)}s + \frac{\beta D}{\beta-2}W(s) \right) ds \right]^{1-\frac{2}{\beta}} \left(\frac{2^\beta}{q\lambda} \right)^{\frac{2}{\beta}} < 1.
%  \end{equation}
%  \item The initial distribution $\mu_0(x,v)$ in \eqref{initial-cond} has finite expectation in both $x$ and $v$, i.e.,
%  \[\E ( \|x(0)\|_p) < \infty,\quad \E ( \|v(0)\|_p) < \infty.\]
%  \end{enumerate}
%Then, the emergence of asymptotic flocking in the sense of Definition \ref{def-flocking} occurs algebraically fast.
\end{theorem}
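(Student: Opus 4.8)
The plan is to mirror the two-step scheme of Theorem~\ref{uncond-flock-constant-D}: derive the velocity alignment for free from the corresponding alignment lemma, and then upgrade it to aggregation by a quantitative, polynomially decaying bound on $\E(\|v(t)\|_p)$. Since $[\E(\|v(0)\|_p^q)]^{1/q}<\infty$ with $q>1$ makes $\|v(0)\|_p$ uniformly integrable (Remark~\ref{R5.5}.(ii)), and since the far-field hypothesis $\psi_*(t^{1+\e})t-\eta\log t\to+\infty$ in particular forces $\psi_*(t^{1+\e})t\to+\infty$, all hypotheses of Lemma~\ref{align-integrable} are satisfied; hence $\lim_{t\to\infty}\E(\|v(t)\|_p)=0$ is immediate, and it only remains to prove $\sup_{t\ge0}\E(\|x(t)\|_p)<\infty$.

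For the aggregation I would start from \eqref{apriori-1}, which gives $\E(\|x(t)\|_p)\le\E(\|x(0)\|_p)+\int_0^t\E(\|v(s)\|_p)\,ds$, and split the integral at a fixed time $T$. On $[0,T]$, the estimate \eqref{E-27} combined with $\psi\ge\psi_*(\cdot)>0$ and the independence of the initial data and $W$ gives $\E(\|v(s)\|_p)\le\E(\|v(0)\|_p)$, because the exponential martingale $\exp(-\frac{1}{2}[M]_s+M_s)$ has mean one, where $M_t:=\int_0^tD(s)\,dW(s)$ and $[M]_\infty=\int_0^\infty D^2(s)\,ds<\infty$; thus $\int_0^T\E(\|v(s)\|_p)\,ds\le T\,\E(\|v(0)\|_p)<\infty$. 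For the tail I would keep the decomposition \eqref{E-29}--\eqref{E-46} from the proof of Lemma~\ref{align-integrable} fully quantitative: with cutoffs $K(t)=\frac{1}{2} t^{1+\e}$ and a truncation level $L=L(t)$ for $\|v(0)\|_p$,
\[
  \E(\|v(t)\|_p)\ \le\ L(t)\,e^{-\lambda\psi_*(2K(t))t}\ +\ L(t)\,\hat I(t)\ +\ \E\big(\|v(0)\|_p;\,\|v(0)\|_p\ge L(t)\big),
\]
with $\hat I(t)=\E[\exp(-\frac{1}{2}[M]_t+M_t);\,X_t\ge K(t)]$, $X_t:=\sup_{0\le s\le t}\|x(s)\|_p$, and $\hat I=\hat I_1+\hat I_2+\hat I_3$ obtained from the inclusion $\{X_t\ge K\}\subset\{\|x(0)\|_p\ge K/2\}\cup\{\|v(0)\|_p\ge(K/2)^{1-\theta}\}\cup\{\int_0^t\exp(-\frac{1}{2}[M]_s+M_s)\,ds\ge(K/2)^{\theta}\}$ for a suitable $\theta\in(0,1)$.

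The far-field assumption enters as in \eqref{E-41}: there are $C,T>0$ with $\psi_*(2K(t))t=\psi_*(t^{1+\e})t\ge\eta\log t+C$ for $t\ge T$, so $e^{-\lambda\psi_*(2K(t))t}\le e^{-\lambda C}\,t^{-\lambda\eta}$. Taking $L(t)=t^{\kappa}$ (and, when $q=\infty$, simply a large constant, so that the truncated-mass term vanishes), I would then check that each tail integral over $[T,\infty)$ is finite: $\int_T^\infty\E(\|v(0)\|_p;\|v(0)\|_p\ge t^{\kappa})\,dt\le\E(\|v(0)\|_p^q)\int_T^\infty t^{-\kappa(q-1)}\,dt$, finite provided $\kappa(q-1)>1$; $\int_T^\infty L(t)e^{-\lambda\psi_*(2K(t))t}\,dt\le e^{-\lambda C}\int_T^\infty t^{\kappa-\lambda\eta}\,dt$, finite provided $\lambda\eta-\kappa>1$; and $\int_T^\infty L(t)\hat I(t)\,dt<\infty$ by repeating the H\"older--Chebyshev--Fubini computation of Lemma~\ref{align-integrable} (using $[M]_\infty<\infty$ for the $\hat I_3$-term, the first moment of $\|x(0)\|_p$ for $\hat I_1$, and the $q$-th moment of $\|v(0)\|_p$ for $\hat I_2$), which imposes a further smallness condition on $\kappa$ and on the auxiliary exponents in terms of $\e$. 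Summing these bounds gives $\int_T^\infty\E(\|v(s)\|_p)\,ds<\infty$, whence $\sup_{t\ge0}\E(\|x(t)\|_p)<\infty$; and because every term in the bound for $\E(\|v(t)\|_p)$ decays at a polynomial rate in $t$, the alignment is at least algebraically fast, which is the asserted conclusion.

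The main difficulty — and the reason the hypotheses are sharper than those of Lemma~\ref{align-integrable} — is the simultaneous bookkeeping of the auxiliary parameters. In the constant-intensity case the cutoff $K(t)$ could grow exponentially, so the tail terms were trivially integrable; here $K(t)$ must stay polynomial, for otherwise $\psi_*(2K(t))t$ would outgrow the far-field hypothesis, and consequently all the tail bounds decay only polynomially and must be forced below $t^{-1}$ using \emph{only} $\E(\|x(0)\|_p)<\infty$ and $\E(\|v(0)\|_p^q)<\infty$. The two competing constraints on $\kappa$, namely $\kappa>\frac{1}{q-1}$ (from the truncated-mass term) and $\kappa<\lambda\eta-1$ (from the $L(t)e^{-\lambda\psi_*(2K(t))t}$ term), are compatible precisely because the hypothesis reads $\eta>\frac{q}{\lambda(q-1)}$, while the assumption $\e>2$ leaves enough room, after choosing $\theta\in(0,1)$ and a H\"older exponent $a>1$ close to $1$, to make $\hat I_3(t)$ of order $t^{-(\theta(1+\e)-1)/a'}$ integrable and to absorb the factor $L(t)$ against $\hat I_1$ and $\hat I_2$. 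Exhibiting one choice of $(\kappa,\theta,a)$ that meets all these inequalities at once is the genuinely delicate step; everything else is a routine rerun of the estimates already carried out for Lemma~\ref{align-integrable} and Theorem~\ref{uncond-flock-constant-D}.
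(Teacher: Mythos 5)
Your proposal follows the paper's own proof almost exactly: alignment is inherited from Lemma~\ref{align-integrable} via Remark~\ref{R5.5}.(ii); for aggregation you split the time integral at a finite $T$ (cf.~\eqref{E-21}), use the far-field hypothesis to produce $\psi_*(t^{1+\e})t\ge\eta\log t+C$ for $t\ge T$ as in \eqref{E-35}; take the same cutoff $K(t)=\tfrac12 t^{1+\e}$ and truncation level $L(t)=t^\delta$; and use the decomposition $\hat I=\hat I_1+\hat I_2+\hat I_3$ of \eqref{E-46} together with the compatibility window $\frac{1}{q-1}<\delta<\lambda\eta-1$, which exists precisely because $\eta>\frac{q}{\lambda(q-1)}$.

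Where you part ways with the paper is instructive rather than disqualifying. You correctly insist that what must be shown is $\int_T^\infty L(s)\,\hat I(s)\,ds<\infty$, in accordance with the bound \eqref{E-29} (and indeed with the paper's own requirement \eqref{E-34}). The paper's displayed estimates \eqref{E-38}--\eqref{E-40}, however, bound only $\int_T^\infty \hat I_i(s)\,ds$ and silently drop the $L(s)=s^\delta$ factor — so your observation that absorbing $L(t)$ forces \emph{further} smallness conditions on $(\kappa,\theta,a)$ in terms of $\e$ actually flags a step the paper glosses over, and it applies to $\hat I_3$ just as much as to $\hat I_1,\hat I_2$. What you (and the paper) leave implicit is a concrete choice of $(\delta,\theta,a)$ showing that these constraints are jointly satisfiable under the bare hypothesis $\e>2$; once one writes them out, the bottleneck is $\delta<(\theta(1+\e)-1)/a'-1$ together with $\delta<(1+\e)(1-\theta)q-1$ against $\delta>\frac1{q-1}$, and it is precisely here that the use of the $q$-th moment for $\hat I_2$ (which you advocate, rather than the paper's first-moment bound in \eqref{E-40}) widens the admissible window. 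Exhibiting one admissible triple is the missing final step, but this is a shared gap with the published argument, not a defect of your route.
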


\begin{remark}
  (i). Two candidates for $\psi$ to fulfill the assumptions in Theorem \ref{flocking-integrable} are the two common cases $\psi(r) = r^{-\alpha}$ and $\psi(r) = \frac{1}{(1+r^2)^{\alpha/2}}$ with $0<\alpha<\frac{1}{3}$. Clearly, the requirements on $\psi$ becomes stronger than Lemma \ref{align-integrable}, so that when restricted to deterministic case, the present theorem has more gap to the unconditional flocking results in \cite[Proposition 4.1, Proposition 4.3]{HL09}, where they only requires $0\le\alpha\le1$ for the two common cases of $\psi$. But as we have said in Lemma \ref{alignment-special}.(i), this is expectable due to our dedicated techniques.

  (ii). If $\psi$ is of the typical form $\psi(r) = r^{-\alpha}$ or $\psi(r) = \frac{1}{(1+r^2)^{\alpha/2}}$, we can get a more sophisticated result. In fact, we will obtain the unconditional flocking in mean provided $0<\alpha<\frac{q-1}{2q-1}$, by utilizing the upper concave envelope. See \ref{App-B}.
\end{remark}

\begin{proof}
  Due to Lemma \ref{align-integrable}, we only need to prove the group forming, i.e., $\sup_{t\ge0} \E(\|x(t)\|_p) <\infty$.
%  Firstly, it is easy to see from \eqref{est-20} that $\E(\|v(t)\|_p) \le \E( \|v(0)\|_p)$ for all $t\ge0$, and hence by \eqref{apriori-1},
%  \begin{equation*}
%    \E(\|x(t)\|_p) \le \E \left( \|x(0)\|_p \right) + \int_0^t \E \left( \|v(s)\|_p \right) ds \le \E(\|x(0)\|_p) + t\E(\|v(0)\|_p).
%  \end{equation*}
%  This implies that
%  \begin{equation}\label{est-36}
%    \sup_{t\ge0} \E(\|x(t)\|_p) \le \E \left( \|x(0)\|_p \right) + \int_0^1 \E \left( \|v(s)\|_p \right) ds + \int_1^\infty \E \left( \|v(s)\|_p \right) ds.
%  \end{equation}
%  where $\E \left( \|x(0)\|_p \right) + \int_0^1 \E \left( \|v(s)\|_p \right) ds < \infty$ by the finite moment assumptions for $\|x(0)\|_p$ and $\|v(0)\|_p$. Thus, in order to prove $\sup_{t\ge0} \E(\|x(t)\|_p) <\infty$, it suffices to show $\int_1^\infty \E \left( \|v(s)\|_p \right) ds < \infty$. Due to \eqref{est-22} and the assumption $\E ( \|v(0)\|_p) < \infty$, it is enough to prove
To this end, we first observe from the assumption \eqref{E-19} that there exists constants $C, T>0$, such that,
  \begin{equation}\label{E-35}
    \psi_*(t^{1+\e})t \ge \eta \log t + C, \quad  \text{for all } t\ge T.
  \end{equation}
  We use a same argument as in Theorem \ref{uncond-flock-constant-D}. Then by virtue of the estimate \eqref{E-29}, to complete the proof it suffices to find two time-dependent functions $K$ and $L$ such that the following hold
  \begin{equation}\label{E-34}
    \int_T^\infty L(s)e^{-\lambda \psi_*(2K(s))s} ds < \infty, \ \int_T^\infty L(s)\hat I(s) ds < \infty, \ \int_T^\infty \E\left( \|v(0)\|_p; \|v(0)\|_p\ge L(s) \right)ds <\infty,
  \end{equation}
  where $\hat I$ is the function defined in \eqref{E-45}. Here in the current situation, we take
  $$K(t) = \frac{1}{2}t^{1+\e} \text{ with } \e>2, \quad\text{and } L(t) = t^\delta \text{ with }\frac{1}{q-1}<\delta<\lambda \eta-1.$$
  Note that the constant $\delta$ does exist since $\lambda \eta > \frac{q}{q-1}$ due to the assumption.
%  and apply \eqref{E-33} to have the following estiamte,
%  \begin{equation}\label{E-35}
%  \begin{aligned}
%\lambda \psi_*(2K(s))s&= \lambda \psi_*(2K(s))s - \lambda\eta \log s + \lambda\eta \log s\\
%&\geq \min_{s\geq 0}\left( \lambda \psi_*(2K(s))s - \lambda\eta \log s\right)+ \lambda\eta \log s\\
%&=C+\lambda\eta \log s,
%\end{aligned}
%  \end{equation}
% where the finite lower bound $C$ in the last equality is due to the fact $\lim\limits_{t\rightarrow +\infty}\left(\psi_*(t^{1+\e})t - \eta \log t\right)=+\infty$ and the positive lower bound of $\psi(r)$ in bounded domain.
  Then \eqref{E-35} yields
  \begin{equation}\label{E-36}
   \int_T^\infty L(s)e^{-\lambda \psi_*(2K(s))s} ds\leq   \int_T^\infty s^\delta e^{-\lambda C-\lambda\eta \log s} ds = e^{-\lambda C} \int_T^\infty \frac{1}{s^{\lambda \eta-\delta}}ds<+\infty.
  \end{equation}
  For the estimate of the time integral of $\hat I$, we recall the estimate \eqref{E-46} and split $\hat I$ into three parts, namely,
  \begin{equation}\label{E-37}
    \hat I(t) \le \hat I_1(t)+\hat I_2(t)+\hat I_3(t).
  \end{equation}
  Now, as $\e>2$, it is possible for us to find the positive constants $\theta$ and $a$ satisfying the following properties,
 \[\theta\in(0,1),\quad a'>1,\quad \theta(1+\e)-1>a', \quad (1-\theta)(1+\e)>1,\]
  where $a'$ is the conjugate of $a$. Then, due to above choice of constants and \eqref{E-31}, we have the following estimate of the time integral of $\hat I_3$,
  \begin{equation}\label{E-38}
 \int_T^\infty \hat I_3(s)ds \le \int_T^\infty \left(\frac{4^\theta e^{\frac{a}{2}[M]_\infty}}{s^{\theta(1+\e)-1}}\right)^{1/a'}ds<+\infty.
  \end{equation}
 Next, for $\hat I_1$, we simply use Chebyshev's inequality to get
  \begin{equation}\label{E-39}
    \int_T^\infty \hat I_1(s) ds \le \int_T^\infty \P\left( \|x(0)\|_p \ge \frac{s^{1+\e}}{4} \right) ds \le \int_T^\infty \frac{4\E\left( \|x(0)\|_p \right)}{s^{1+\e}} ds < \infty.
  \end{equation}
  In a similar fashion, since $(1-\theta)(1+\e)>1$, we have the following for $\hat I_2$,
  \begin{equation}\label{E-40}
    \int_T^\infty \hat I_2(s) ds \le \int_T^\infty \P\left( \|v(0)\|_p \ge \left( \frac{s^{1+\e}}{4} \right)^{1-\theta} \right) ds \le \int_T^\infty \frac{4^{1-\theta}\E\left( \|v(0)\|_p \right)}{s^{(1+\e)(1-\theta)}} ds < \infty.
  \end{equation}
  For the last integral in \eqref{E-34}, we use Chebyshev's inequality and H\"older's inequality to have
  \begin{equation}\label{E-47}
    \begin{split}
      \int_T^\infty \E\left( \|v(0)\|_p; \|v(0)\|_p\ge L(s) \right)ds &\le \int_T^\infty \left[\E\left( \|v(0)\|_p^q \right)\right]^{1/q} \left[\P\left( \|v(0)\|_p\ge L(s) \right)\right]^{1/q'} ds \\
      &\le \int_T^\infty \left[\E\left( \|v(0)\|_p^q \right)\right]^{1/q} \left[\frac{\E\left( \|v(0)\|_p^q \right)}{L(s)^q}\right]^{1/q'} ds \\
      &= \E\left( \|v(0)\|_p^q \right) \int_T^\infty s^{-\delta(q-1)} ds \\
      &< \infty,
    \end{split}
  \end{equation}
  where in the last inequality we used that $\delta(q-1)>1$ due to the selection of $\delta$. Finally, we combine \eqref{E-36}, \eqref{E-37}, \eqref{E-38}, \eqref{E-39}, \eqref{E-40} and \eqref{E-47} to achieve \eqref{E-34}. The emergence of flocking in mean follows. Moreover, it is easy to see from those estimates that the flocking occurs algebraically fast.
\end{proof}

\section{Conclusions and discussion}\label{sec:6}
\setcounter{equation}{0}
In this paper, we study the singular C-S model with a multiplicative noise, and provide the well posedness, collision-avoidance and large time behavior results. Actually, we prove the existence of a unique solution before the first collision for the system \eqref{CS}. Then we consider higher order singular case and prove the collision-avoidance and thus obtain the global existence of the solution. For the large time behavior, the emergence of flocking can be obtained similarly as the regular case if the communication function has a positive lower bound. This kind of communications acts like a uniform damping so that the velocity will tends to the mean asymptotically. While for the communication function with zero lower bound, the original Lyapunov functional approach fails since the particle positions lose the compactness in general. Therefore, we instead apply the estimates of the exponential martingale to capture the decay of the velocity. Based on different structures of the communications, we obtain conditional flocking and unconditional flocking respectively. Then, there are two natural and challenging problems left: the first is how to define the solution and describe the dynamics after the collisions happen if they are inevasible; the second is how to describe the asymptotic cluster formation when $\psi$ decay fast at far field. These two problems are fundamental and will appear in many other systems with singular potential or interactions, and we will continue to study them in our future works.

\appendix

\section{Non-flocking in two particle system}\label{App-A}
\setcounter{equation}{0}
In this section, we will provide a simple example such that flocking will not emerge in the sense of Definition \ref{def-cond-flocking} on particular events, which is mentioned in Remark \ref{R2.6}. We assume that the C-S system has only two particles moving on the real line $\R$. Then the system \eqref{CS} reads
\begin{equation}\label{CS-2}\left\{
  \begin{aligned}
    dx_1 &= v_1 dt, \quad dv_1 = \frac{\lambda}{2} \psi(|x_1-x_2|)(v_2-v_1)dt + D v_1 dW, \\
    dx_2 &= v_2 dt, \quad dv_2 = \frac{\lambda}{2} \psi(|x_1-x_2|)(v_1-v_2)dt + D v_2 dW. %\\
    %(x_i&(0), v_i(0)) = (x^0_i,v^0_i) \in \R^{2}, \quad i=1,2.
  \end{aligned} \right.
\end{equation}
If we set
\begin{equation*}
  x:= x_1 - x_2, \quad v:= v_1 - v_2,
\end{equation*}
then the pair $(x,v)$ satisfies the following system
\begin{equation}\label{est-42}
  dx = v dt, \quad dv = -\lambda \psi(|x|)vdt + D v dW,
\end{equation}

\begin{proposition}
  Let $D$ be a constant. Assume that $\psi>0$. The system \eqref{CS-2} does not flock conditioning on the following event
  \begin{equation}\label{event-1}
    \left\{ v(0) \ge \lambda \int_{x(0)}^\infty \psi(|x|) dx \right\},
  \end{equation}
  if the event has nonzero probability.
\end{proposition}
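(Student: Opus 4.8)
\emph{Proof plan.} I would work throughout with the scalar reduction \eqref{est-42} for $x=x_1-x_2$, $v=v_1-v_2$ (so that $\|x(t)\|_p$ and $\|v(t)\|_p$ are fixed positive multiples of $|x(t)|$ and $|v(t)|$), and introduce
\[
\Psi(r):=-\int_r^\infty\psi(|y|)\,dy,\qquad Y(t):=v(t)+\lambda\Psi(x(t)),
\]
so that the event \eqref{event-1} is exactly $A=\{Y(0)\ge0\}$. First I would record the structural facts that hold on $A$: (i) $A$ depends only on the initial data $(x^0,v^0)$, hence is independent of $W$, and $\int_{x(0)}^\infty\psi(|y|)\,dy<\infty$ (otherwise $A$ would be void), so that since $\psi>0$ one has $v(0)>0$ on $A$; (ii) solving the linear equation for $v$, $v(t)=v(0)\exp\!\big(-\lambda\int_0^t\psi(|x(s)|)\,ds\big)\mathcal E(t)$, hence $0<v(t)\le v(0)\,\mathcal E(t)$ for all $t$, so $x$ is strictly increasing on $A$ and $x_\infty:=\lim_{t\to\infty}x(t)\in(x(0),+\infty]$ exists; (iii) since $dx=v\,dt$ has finite variation, It\^o's formula gives $dY=Dv\,dW$, so $Y$ is a continuous local martingale with $Y(0)\ge0$ on $A$.

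Then I would argue by contradiction: suppose the system does flock conditioning on $A$. From conditional aggregation $\sup_t\E(\|x(t)\|_p\mid A)<\infty$ and the monotonicity of $t\mapsto x(t)^+$ on $A$, monotone convergence gives $\E(x_\infty^+\mid A)<\infty$, whence $x_\infty<\infty$ a.s.\ conditionally on $A$. Since no moment bound on the initial data is assumed, I would then truncate: pick $M$ so large that $A_M:=A\cap\{v(0)\le M\}$ has positive probability (possible as $A_M\uparrow A$), note $\int_{x(0)}^\infty\psi(|y|)\,dy\le M/\lambda$ on $A_M$, and pass to $\mathbb Q:=\P(\cdot\mid A_M)$. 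As $A_M$ depends only on the initial data, under $\mathbb Q$ the process $W$ is still a Brownian motion independent of the now bounded initial data; combined with $v(s)\le v(0)\,\mathcal E(s)$ and $\E_{\mathbb Q}[\mathcal E(s)^2]=e^{D^2 s}$ this yields $\E_{\mathbb Q}\!\int_0^t v(s)^2\,ds<\infty$ for every $t$, so $\int_0^\cdot Dv\,dW$ is a true $L^2$-martingale on each $[0,t]$ and hence $Y$ is a genuine $\mathbb Q$-martingale: $\E_{\mathbb Q}[Y(t)]=\E_{\mathbb Q}[Y(0)]\ge0$ for all $t\ge0$.

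On the other hand, $v(t)=Y(t)+\lambda\int_{x(t)}^\infty\psi(|y|)\,dy$, so
\[
\E_{\mathbb Q}[Y(t)]=\E_{\mathbb Q}[v(t)]-\lambda\,\E_{\mathbb Q}\Big[\int_{x(t)}^\infty\psi(|y|)\,dy\Big].
\]
Conditional velocity alignment forces $\E_{\mathbb Q}[v(t)]\to0$ (using $\E(\|v(t)\|_p\mathbf 1_A)\ge\E(\|v(t)\|_p\mathbf 1_{A_M})$), while $x(t)\uparrow x_\infty<\infty$ together with the domination $0\le\int_{x(t)}^\infty\psi(|y|)\,dy\le M/\lambda$ on $A_M$ gives, by dominated convergence, $\E_{\mathbb Q}\big[\int_{x(t)}^\infty\psi(|y|)\,dy\big]\to\E_{\mathbb Q}\big[\int_{x_\infty}^\infty\psi(|y|)\,dy\big]>0$ (strict positivity because $\psi>0$ and $x_\infty<\infty$). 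Hence $\E_{\mathbb Q}[Y(t)]\to-\lambda\,\E_{\mathbb Q}\big[\int_{x_\infty}^\infty\psi(|y|)\,dy\big]<0$, contradicting $\E_{\mathbb Q}[Y(t)]\equiv\E_{\mathbb Q}[Y(0)]\ge0$. This contradiction proves the claim.

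The hard part, and the reason this is not a one-line argument as in the deterministic case in \cite{HL09} where $Y$ is conserved pathwise, is that the multiplicative noise makes $Y$ only a local martingale which is generically not uniformly integrable, so one cannot pass $\E[Y(t)]=\E[Y(0)]$ to the limit $t\to\infty$ directly. The device above circumvents this by keeping $t$ finite in the martingale identity — which becomes legitimate only after the $L^\infty$-truncation of $v(0)$, and here the hypothesis that \eqref{event-1} has nonzero probability is exactly what secures $\P(A_M)>0$ — and instead taking the limit in the \emph{algebraic} identity $v(t)=Y(t)+\lambda\int_{x(t)}^\infty\psi$, in which the ``potential'' term has a strictly positive limit precisely because aggregation would keep $x$ bounded. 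A minor technical point to dispatch en route is the finiteness and continuity of $\Psi$ near the origin when $x(0)$ may be negative; this is automatic on $A$, since there $\int_{x(0)}^\infty\psi(|y|)\,dy<\infty$ already forces $\psi$ to be integrable near $0$.
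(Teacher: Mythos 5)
Your proof is correct, and it takes a genuinely different route from the paper's. The paper's argument is shorter and more direct: it works with the integrated inequality $v(t)\ge\lambda\int_{x(t)}^\infty\psi(|y|)\,dy+D\int_0^tv\,dW$ valid on $A$, takes conditional expectation given $A$, and then for a fixed $t$ applies Chebyshev's inequality to bound $\E\bigl(\int_{x(t)}^\infty\psi(|y|)\,dy\big|A\bigr)$ from \emph{below} by the $t$-independent constant $\lambda\int_K^\infty\psi\cdot(1-M/K)>0$ (with $K>M:=\sup_t\E(|x(t)|\,|A)$), contradicting conditional alignment. Crucially, the paper never invokes monotonicity of $x$ or the existence of $x_\infty$, and it disposes of the stochastic integral by simply asserting that $\int_0^\cdot v\,dW$ is a true martingale ``due to the estimate \eqref{apriori-2}.'' You instead package the dynamics into the local martingale $Y=v+\lambda\Psi(x)$, prove $x\uparrow x_\infty<\infty$ a.s.\ on $A$ from aggregation and monotone convergence, pass to the limit $t\to\infty$ in the identity $\E_{\mathbb Q}[Y(t)]=\E_{\mathbb Q}[v(t)]-\lambda\,\E_{\mathbb Q}\bigl[\int_{x(t)}^\infty\psi\bigr]$ via dominated convergence, and derive a sign contradiction with $\E_{\mathbb Q}[Y(t)]\equiv\E_{\mathbb Q}[Y(0)]\ge0$. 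The real value added in your version is the truncation to $A_M=A\cap\{v(0)\le M\}$: without any moment hypothesis on $v(0)$, the estimate \eqref{apriori-2} alone does not guarantee that $\int_0^\cdot v\,dW$ (equivalently, $Y$) is a true rather than merely local martingale, a point the paper's one-sentence justification glosses over; your $L^\infty$-truncation of $v(0)$ followed by the $L^2$-bound $\E_{\mathbb Q}\int_0^tv^2\,ds\le M^2\int_0^te^{D^2s}\,ds<\infty$ closes this gap cleanly, and the hypothesis $\P(A)>0$ is exactly what ensures $\P(A_M)>0$ for large $M$. Your argument is longer and needs the pathwise monotonicity of $x$ on $A$ (which the paper's Chebyshev step avoids), but in exchange it is fully rigorous on the martingale step and identifies the conserved quantity $Y$ that parallels the deterministic proof in \cite{HL09}.

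Two small remarks. First, when you pass from $\E(\|v(t)\|_p\,|A)\to0$ to $\E_{\mathbb Q}[v(t)]\to0$, the bound should be stated as $\E_{\mathbb Q}[\|v(t)\|_p]\le\frac{\P(A)}{\P(A_M)}\,\E(\|v(t)\|_p\,|A)$, not the reverse inequality you hint at; the conclusion is unaffected. Second, your observation that $A$ is $\F_0$-measurable (so that the conditional law of $W$ under $\mathbb Q$ is still that of an $\{\F_t\}$-Brownian motion) is precisely the fact the paper invokes informally by saying ``$A$ is independent of $W$''; you are right that what is actually used is $\F_0$-measurability, so that $\E[\int_0^t v\,dW\,|\,\F_0]=0$ and hence the conditional expectation on $A$ vanishes.
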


\begin{proof}
  According to \eqref{est-42} we have
  \begin{equation*}
    dv = -\lambda \psi(|x|)dx + D v dW.
  \end{equation*}
  It follows by integrating over time interval $[0,t]$ that
  \begin{equation*}
    v(t) - v(0) = -\lambda \int_{x(0)}^{x(t)} \psi(|y|)dy + D \int_0^t v(s) dW(s).
  \end{equation*}
  Denote the event \eqref{event-1} by $A$. Then on this event, we have
  \begin{equation}\label{est-43}
    v(t) \ge \lambda \int_{x(t)}^\infty \psi(|y|)dy + D \int_0^t v(s) dW(s).
  \end{equation}
  Observe that $\int_0^\cdot v(s) dW(s)$ is a martingale due to the estimate \eqref{apriori-2}. Taking expectation conditioning on $A$ on both sides of \eqref{est-43} and noting the fact that $A$ is independent of $W$, we approach
  \begin{equation}\label{est-44}
    \E(v(t) | A) \ge \lambda \E\left( \int_{x(t)}^\infty \psi(|y|)dy \Bigg| A \right) + D \E\left( \int_0^t v(s) dW(s) \right) = \lambda \E\left( \int_{x(t)}^\infty \psi(|y|)dy \Bigg| A \right).
  \end{equation}
  Now, assume in contrast that the conditional flocking given $A$ occurs, that is,
  \begin{equation*}
    \lim_{t\to\infty} \E(v(t)| A) \to 0 \quad\text{and}\quad \sup_{t\ge0} \E(|x(t)| | A) = M <\infty.
  \end{equation*}
  Then we choose $K>M$ and use Chebyshev's inequality for conditional expectation to proceed \eqref{est-44},
  \begin{equation*}
    \begin{split}
      \E(v(t) | A) &\ge \lambda \E\left( \int_{x(t)}^\infty \psi(|y|)dy; |x(t)| \le K \Bigg| A \right) \\
      &\ge \lambda \int_K^\infty \psi(|y|)dy \P\left( |x(t)| \le K | A \right) \\
      &\ge \lambda \int_K^\infty \psi(|y|)dy \left( 1-\frac{\E\left( |x(t)| | A \right)}{K} \right) \\
      &\ge \lambda \int_K^\infty \psi(|y|)dy \left( 1-\frac{M}{K} \right),
    \end{split}
  \end{equation*}
  where the RHS of the last inequality is a positive constant independent of $t$. This leads to a contradiction to $\lim_{t\to\infty} \E(v(t)| A) \to 0$.
\end{proof}

\section{Flocking for two typical communications}\label{App-B}

When the communication weight is of the form $\psi(r) = r^{-\alpha}$ at far field, we can obtain better estimates as follows.
\begin{proposition}
  Let $p\ge2$, $\int_0^\infty D^2(s) ds <\infty$, and the system \eqref{CS} admit a global strong solution $(x,v)$. Assume that $\E ( \|x(0)\|_p) < \infty$ and $[\E\left( \|v(0)\|_p^q \right)]^{1/q}<\infty$ for some $1<q\le\infty$. Suppose $\psi$ is of the form $\psi(r) = r^{-\alpha}$ or $\psi(r) = \frac{1}{(1+r^2)^{\alpha/2}}$, $r>0$, with
  \begin{equation}\label{asspt-alpha}
    0<\alpha< \frac{q-1}{2q-1}.
  \end{equation}
  Then the unconditional flocking in mean emerges algebraically fast.
\end{proposition}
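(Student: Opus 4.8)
The plan is to verify the two parts of Definition~\ref{def-flocking}, namely $\lim_{t\to\infty}\E(\|v(t)\|_p)=0$ and $\sup_{t\ge0}\E(\|x(t)\|_p)<\infty$, with essentially all the work going into the second; the key new ingredient is a genuinely $t$-uniform \emph{pathwise} bound on $\|x(t)\|_p$ obtained from the concave antiderivative of $\psi$ (the ``upper concave envelope'' device). Concretely, let $\mathcal E$ be the exponential martingale of Definition~\ref{exponential martingale} and recall the comparison estimate \eqref{E-27}: a.s., $\|v(t)\|_p\le\|v(0)\|_p\exp(-\lambda\int_0^t\psi(2\|x(s)\|_p)\,ds)\,\mathcal E(t)$ for all $t\ge0$. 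Put $\Phi(r):=\int_0^r\psi(s)\,ds$, so that $\Phi(r)=\frac{r^{1-\alpha}}{1-\alpha}$ for $\psi(r)=r^{-\alpha}$ and $c_1 r^{1-\alpha}-c_2\le\Phi(r)\le c_3 r^{1-\alpha}$ for $\psi(r)=(1+r^2)^{-\alpha/2}$; in both cases $\Phi$ is increasing, \emph{concave} on $(0,\infty)$ --- this is where $\alpha<1$ enters --- with $\Phi(\infty)=\infty$, and both weights satisfy $\psi(r)\le2^\alpha\psi(2r)$. Writing $Y(t):=\lambda\int_0^t\psi(2\|x(s)\|_p)\,ds$ and using \eqref{apriori-1}, a.e.\ in $t$ one gets
\[
  \Big|\frac{d}{dt}\Phi(\|x(t)\|_p)\Big|=\psi(\|x(t)\|_p)\Big|\frac{d}{dt}\|x(t)\|_p\Big|\le\frac{2^\alpha}{\lambda}Y'(t)\|v(t)\|_p\le\frac{2^\alpha}{\lambda}\|v(0)\|_p\,\mathcal E^*_\infty\,Y'(t)\,e^{-Y(t)},
\]
where $\mathcal E^*_\infty:=\sup_{t\ge0}\mathcal E(t)$; integrating and using $\int_0^\infty Y'e^{-Y}\le1$ gives $\sup_{t\ge0}\|x(t)\|_p\le\Xi$ with
\[
  \Xi:=\Phi^{-1}\!\Big(\Phi(\|x(0)\|_p)+\frac{2^\alpha}{\lambda}\|v(0)\|_p\mathcal E^*_\infty\Big)\le c(\alpha,\lambda)\Big(\|x(0)\|_p+\big(\|v(0)\|_p\mathcal E^*_\infty\big)^{\frac{1}{1-\alpha}}\Big).
\]

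Next I would take expectations. Since $D$ is deterministic with $\int_0^\infty D^2<\infty$, the quadratic variation $[M]_\infty=\int_0^\infty D^2(s)\,ds$ is a finite \emph{constant}, whence $\sup_{t\ge0}\E(\mathcal E(t)^m)=\exp(\frac{m(m-1)}{2}[M]_\infty)<\infty$ for all $m>1$, and Doob's $L^m$-maximal inequality gives $\mathcal E^*_\infty\in L^m(\Omega)$ for every $m\in[1,\infty)$; moreover $\mathcal E^*_\infty$ depends only on $W$ and is therefore independent of $(x(0),v(0))$. The hypothesis $\alpha<\frac{q-1}{2q-1}$ yields $\frac{1}{1-\alpha}<\frac{2q-1}{q}\le q$, so $\|v(0)\|_p\in L^{1/(1-\alpha)}$; combining this with $\E(\|x(0)\|_p)<\infty$, the independence, and $\mathcal E^*_\infty\in L^{1/(1-\alpha)}$ in the bound for $\Xi$ above gives $\E(\Xi)<\infty$, and hence $\sup_{t\ge0}\E(\|x(t)\|_p)\le\E(\Xi)<\infty$, the group-forming property.

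For the alignment and its rate, Step~1 gives $\psi(2\|x(s)\|_p)\ge\psi(2\Xi)>0$ a.s., so \eqref{E-27} yields $\|v(t)\|_p\le\|v(0)\|_p\mathcal E^*_\infty e^{-\lambda\psi(2\Xi)t}\to0$ a.s.; dominated convergence with the majorant $\|v(0)\|_p\mathcal E^*_\infty\in L^1$ gives $\E(\|v(t)\|_p)\to0$ (this also follows from Lemma~\ref{align-integrable}, whose hypotheses hold here). To extract a polynomial rate, choose a truncation level $R(t)=t^{\kappa}$ with $0<\kappa<1/\alpha$ and split
\[
  \E(\|v(t)\|_p)\le e^{-\lambda\psi(2R(t))t}\E(\|v(0)\|_p\mathcal E^*_\infty)+\E\big(\|v(0)\|_p\mathcal E^*_\infty\,;\ \Xi>R(t)\big):
\]
the first term is $c\,e^{-\lambda2^{-\alpha}t^{1-\alpha\kappa}}$, faster than any power of $t$; for the second, $\{\Xi>R\}\subset\{\|x(0)\|_p\ge cR\}\cup\{\|v(0)\|_p\mathcal E^*_\infty\ge cR^{1-\alpha}\}$ gives $\P(\Xi>R)\le c(R^{-1}+R^{-(1-\alpha)m})$ for every $m<q$, and Hölder's inequality bounds that term by $\|\|v(0)\|_p\mathcal E^*_\infty\|_m\,\P(\Xi>R(t))^{1-1/m}$, a strictly negative power of $t$. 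Hence $\E(\|v(t)\|_p)=O(t^{-\gamma})$ for some $\gamma>0$, i.e., the flocking in mean is algebraically fast. (For $\psi(r)=(1+r^2)^{-\alpha/2}$ one may alternatively replace $\psi$ by $c_\alpha r^{-\alpha}$ in the far field throughout.)

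The main obstacle is conceptual rather than computational: one must notice that, in contrast with the general Theorem~\ref{flocking-integrable} where only a sublinear-in-$t$ control of $\sup_{s\le t}\|x(s)\|_p$ is available, power-type weights admit a concave antiderivative of infinite mass, which is exactly what converts the random, merely sublinearly controlled growth of the configuration into a uniform-in-time pathwise bound. After that, the delicate points are the martingale bookkeeping of Step~2 --- establishing the full family of $L^m$-bounds for the running supremum $\mathcal E^*_\infty$, which relies crucially on $\int_0^\infty D^2<\infty$ (and fails for constant $D$, cf.\ Theorem~\ref{uncond-flock-constant-D}) --- together with pinning down the exact power $\frac{1}{1-\alpha}$ of $\|v(0)\|_p$ that must be integrable, which is where the threshold $\alpha<\frac{q-1}{2q-1}$ is consumed, and the calibration of $R(t)$ in Step~3 so that both halves of the split decay polynomially.
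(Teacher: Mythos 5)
Your proof is correct, but it takes a genuinely different route from the paper's. The paper's Appendix B argument works entirely at the expectation level: starting from \eqref{E-27}, it applies H\"older's inequality to separate the exponential martingale from the alignment decay factor $F_t^{(a)}(r)=e^{-a'\lambda\psi(2r)t}$, then replaces $F_t^{(a)}$ by its \emph{upper concave envelope} $\hat F_t^{(a)}$ so that Jensen's inequality can be pushed through, yielding $\E(F_t^{(a)}(X_t))\le\hat F_t^{(a)}(\E(X_t))$; the a priori sublinear bound $\E(X_t)\le\E\|x(0)\|_p+t\,\E\|v(0)\|_p$ then lands in the linear branch of $\hat F_t^{(a)}$ for large $t$, giving polynomial decay of $\E\|v(t)\|_p$, which is finally integrated in time to obtain aggregation. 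You instead produce a genuinely \emph{pathwise} uniform-in-time bound $\sup_t\|x(t)\|_p\le\Xi$ by integrating the differential inequality for the antiderivative $\Phi(\|x(t)\|_p)$, exploiting the telescoping $\int_0^\infty Y'e^{-Y}\le1$; aggregation then follows by a single expectation of $\Xi$, using independence of $(x(0),v(0))$ from $W$ and the fact that the running supremum $\mathcal E^*_\infty$ of the exponential martingale is in every $L^m$ precisely because $\int_0^\infty D^2<\infty$. Your device is not actually the upper concave envelope (despite your terminology); it is closer in spirit to the Lyapunov functional $\mathcal E_\pm=\interleave v\interleave_l\pm\frac{\lambda}{N}\Psi(\interleave x\interleave_l)$ used in Section~\ref{sec:4} for collision avoidance, now put to work for aggregation. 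Your route yields strictly more (a.s.\ boundedness of trajectories, not just of moments), makes the role of $\mathcal E^*_\infty\in L^m$ and the threshold $\frac{1}{1-\alpha}<\frac{2q-1}{q}\le q$ transparent, and gives the alignment rate by a clean truncation on $\{\Xi>R(t)\}$ instead of the paper's double truncation in $\eqref{E-29}$. One small technicality you should address: when $\psi(r)=r^{-\alpha}$, $\Phi$ is only H\"older near $0$, so the chain-rule/FTC step for $\Phi(\|x(t)\|_p)$ needs the observation that $\|x(t)\|_p>0$ for all $t$ (which holds since the assumed global strong solution is collision-free and $\sum_i x_i=0$), so that $\Phi\circ\|x\|_p$ is locally Lipschitz; for $\psi(r)=(1+r^2)^{-\alpha/2}$ there is no issue.
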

\begin{proof}
  We will only prove the case $\psi(r) = r^{-\alpha}$ as the other is trivially similar. Obviously, the assumption \eqref{asspt-alpha} for $\alpha$ yields $0<\alpha<\frac{1}{2}$. Hence, we have already shown the velocity alignment in Lemma \ref{align-integrable} and Remark \ref{alignment-special}.(i) for this case. It is only left to prove the group forming.

  To this end, recall in \eqref{E-4} that we denote $X_t := \sup_{0\le s\le t} \|x(s)\|_p$. Then in a similar fashion as \eqref{E-29}, we use \eqref{E-27} and H\"older's inequality to have, for each $a>1$,
  \begin{equation}\label{est-33}
    \begin{split}
      \E \left( \|v(t)\|_p \right) &\le \E \left[ \left( \ind_{\|v(0)\|_p< L\}} + \ind_{\|v(0)\|_p\ge L\}} \right) \|v(0)\|_p \exp\left(-\lambda \psi(2X_t) t -\frac{1}{2}[M]_t + M_t \right) \right] \\
      &\le L\E \left[ \exp\left(-\lambda \psi(2X_t) t -\frac{1}{2}[M]_t + M_t \right) \right] \\
      & \quad+ \E \left[ \|v(0)\|_p \exp\left( -\frac{1}{2}[M]_t + M_t \right); \|v(0)\|_p\ge L \right] \\
      &\le L \left( \E\left[ \exp\left( -\frac{a}{2}[M]_t + aM_t \right)\right] \right)^{1/a} \left( \E e^{-a'\lambda \psi(2X_t) t} \right)^{1/a'} \\
      & \quad+ \E \left( \|v(0)\|_p ; \|v(0)\|_p\ge L \right) \\
      &\le L \exp\left( \frac{a-1}{2}[M]_t \right) \left[ \E (F_t(X_t)) \right]^{1/a'} + \E \left( \|v(0)\|_p ; \|v(0)\|_p\ge L \right).
    \end{split}
  \end{equation}
  where $M_t := \int_0^t D(s)dW(s)$, $a'$ denotes the conjugate of $a$, and for each $t\ge0$ and $a>1$ we denote
  $$F_t^{(a)}(r):=e^{-a'\lambda \psi(2 r) t} = e^{-a'\lambda (2 r)^{-\alpha} t}, r\ge0.$$
  We define an auxiliary function by
  \begin{equation*}
    \hat F_t^{(a)}(r) :=
    \begin{cases}
      2r(e\alpha a'\lambda t)^{-1/\alpha}, & 0\le r \le \frac{1}{2} (\alpha a'\lambda t)^{1/\alpha}, \\
      e^{-a'\lambda (2 r)^{-\alpha} t}, & r > \frac{1}{2} (\alpha a'\lambda t)^{1/\alpha}.
    \end{cases}
  \end{equation*}
  Then it is easy to see that $\hat F_t^{(a)}(r)$ is increasing and concave in $r$ and $F_t^{(a)}\le \hat F_t^{(a)}$ for each $t\ge0$. Indeed, $\hat F_t^{(a)}$ is the smallest concave function that lies upon $F_t^{(a)}$, which is the so called \emph{upper concave envelope}. Applying Jensen's inequality we have
  \begin{equation}\label{est-34}
    \E \left(F_t^{(a)}(X_t)\right) \le \E \left( \hat F_t^{(a)}(X_t) \right) \le \hat F_t^{(a)}(\E (X_t)).
  \end{equation}
  On the other hand, \eqref{E-49} implies
  \begin{equation}\label{est-35}
    \E(X_t) \le \E(\|x(0)\|_p) + t\E(\|v(0)\|_p).
  \end{equation}
  Combining \eqref{est-33}, \eqref{est-34}, \eqref{est-35} as well as the assumption $[M]_\infty = \int_0^\infty D^2(s) ds <\infty$, we get
  \begin{equation}\label{E-48}
    \begin{split}
      \E \left( \|v(t)\|_p \right) &\le L \exp\left( \frac{a-1}{2}[M]_\infty \right) \left[ \hat F_t^{(a)}(\E(\|x(0)\|_p) + t\E(\|v(0)\|_p)) \right]^{1/a'} \\
      & \quad + \E \left( \|v(0)\|_p ; \|v(0)\|_p\ge L \right).
    \end{split}
  \end{equation}
  Since $\alpha<\frac{1}{2}$, there exists $T>0$ such that whenever $t\in[T,\infty)$, $\E(\|x(0)\|_p) + t\E(\|v(0)\|_p) \le \frac{1}{2} (\alpha a'\lambda t)^{1/\alpha}$ and hence
  \begin{equation*}
    \hat F_t^{(a)}(\E(\|x(0)\|_p) + t\E(\|v(0)\|_p)) = \frac{2(\E(\|x(0)\|_p) + t\E(\|v(0)\|_p))}{(e\alpha a'\lambda t)^{1/\alpha}} \to 0
  \end{equation*}
  as $t\to\infty$. This recover the velocity alignment $\lim_{t\to\infty} \E(\|v(t)\|_p) = 0$ by letting $t\to\infty$ and $L\to\infty$ successively in both sides of \eqref{E-48}. Finally, we take $L$ to be time-dependent as $L(t)=t^\delta$, and choose $a>1$ and $\delta>\frac{1}{q-1}$ such that
  $$\frac{1}{a'}\left(\frac{1}{\alpha}-1\right) -\delta >1.$$
  This is achievable since $\frac{1}{\alpha}-1 >\frac{1}{q-1}+1$ by the assumption.
  Then it follows from \eqref{E-48} and \eqref{E-47} that
  \begin{equation*}
    \begin{split}
      \int_T^\infty \E \left( \|v(t)\|_p \right) dt &\le \exp\left( \frac{a-1}{2}[M]_\infty \right) \int_T^\infty L(t) \left[ \frac{2(\E(\|x(0)\|_p) + t\E(\|v(0)\|_p))}{(e\alpha a'\lambda t)^{1/\alpha}} \right]^{1/a'} dt \\
      & \quad + \int_T^\infty \E \left( \|v(0)\|_p ; \|v(0)\|_p\ge L(t) \right) dt \\
      &\le c\left(\alpha,\lambda,q,a,[M]_\infty,\E\left(\|x(0)\|_p\right), [\E\left( \|v(0)\|_p^q \right)]^{1/q} \right) \int_T^\infty \left( t^{\delta-\frac{1}{a'}(\frac{1}{\alpha}-1)} + t^{-\delta(q-1)} \right) dt \\
      &<\infty.
    \end{split}
  \end{equation*}
  The emergence of algebraically fast flocking in mean follows in a similar fashion as \eqref{E-21}.
%  (Method two). Again we use \eqref{E-27}, \eqref{est-34} and H\"older's inequality and have for all $t\ge T$ that
%  \begin{equation}\label{E-50}
%    \begin{split}
%      \E \left( \|v(t)\|_p \right) &\le \E \left[ \|v(0)\|_p \exp\left(-\lambda \psi(2X_t) t -\frac{1}{2}[M]_t + M_t \right) \right] \\
%      &\le \left( \E\left[ \|v(0)\|_p \exp\left( -\frac{q}{2}[M]_t + qM_t \right)\right] \right)^{1/q} \left( \E e^{-q'\lambda \psi(2X_t) t} \right)^{1/q'} \\
%      &= \left[\E\left( \|v(0)\|_p^q \right)\right]^{1/q} \exp\left( \frac{q-1}{2}[M]_t \right) \left[ \E \left(F_t^{(q)}(X_t)\right) \right]^{1/q'} \\
%      &\le \left[\E\left( \|v(0)\|_p^q \right)\right]^{1/q} \exp\left( \frac{q-1}{2}[M]_\infty \right) \left[ \hat F_t^{(q)}(\E (X_t)) \right]^{1/q'} \\
%      &= \left[\E\left( \|v(0)\|_p^q \right)\right]^{1/q} \exp\left( \frac{q-1}{2}[M]_\infty \right) \left[ \frac{2(\E(\|x(0)\|_p) + t\E(\|v(0)\|_p))}{(e\alpha q'\lambda t)^{1/\alpha}} \right]^{1/q'} \\
%      &\le c\left(\alpha,\lambda,q,[M]_\infty,\E\left(\|x(0)\|_p\right), \E\left( \|v(0)\|_p^q \right)\right) t^{-\frac{1}{q'}(\frac{1}{\alpha}-1)}.
%    \end{split}
%  \end{equation}
%  The assumption for $\alpha$ yields $\frac{1}{q'}(\frac{1}{\alpha}-1)>1$. It follows from \eqref{E-50} that
%  \begin{equation*}
%    \lim_{t\to\infty} \E(\|v(t)\|_p) = 0 \quad \text{and } \int_T^\infty \E \left( \|v(t)\|_p \right) dt < \infty,
%  \end{equation*}
%  and the emergence of algebraically fast flocking in mean then follows.
\end{proof}

\paragraph{Acknowledgements.}
The work of X. Zhang is supported by the National Natural Science Foundation of China (Grant No. 11801194), Hubei Key Laboratory of Engineering Modeling and Scientific Computing, and National Natural Science Foundation of China (Grant No. 11971188). The work of Q. Huang is supported by FCT, Portugal, project PTDC/MAT-STA/28812/2017. We would like to thank Dr.~Shaung Chen, Dr.~Zhao Xu and Ms.~Zibo Wang for helpful discussions, and also thank the reviewers for their thoughtful comments and efforts towards improving our manuscript.

%\bibliographystyle{MyStyle-plainnat}
%\bibliographystyle{ws-m3as}
%\bibliography{CS-Model-v5}

\end{document}